\newtheorem{thmx}{Theorem}
\title{The $cd$-index of semi-Eulerian posets}
\author{Martina Juhnke \thanks{Institut f\"ur Mathematik, Universit\"at Osnabr\"uck, Albrechtstra\ss e 28a, Osnabr\"uck, Germany. \texttt{martina.juhnke@uni-osnabrueck.de}}, Jos\'e Alejandro Samper \thanks{Departamento de Matemáticas, Pontificia Universidad Católica de Chile, Macul, Chile. \texttt{jsamper@uc.cl}}
\thanks{Partially supported by ANID Fondecyt Iniciación grant No. 11221076}, Lorenzo Venturello \thanks{Dipartimento di ingegneria informatica e scienze matematiche, Universit\`{a} di Siena, Italy. \texttt{lorenzo.venturello@unisi.it}}
\thanks{Partially supported by INdAM-GNSAGA and by PRIN 2022S8SSW2}
}
\date{}
\newtheorem{theorem}{Theorem}[section]
\newtheorem{proposition}[theorem]{Proposition}
\newtheorem{lemma}[theorem]{Lemma}
\newtheorem{corollary}[theorem]{Corollary}
\newtheorem{conjecture}[theorem]{Conjecture}
\theoremstyle{definition}
\newtheorem{definition}[theorem]{Definition}
\newtheorem{example}[theorem]{Example}
\newtheorem{remark}[theorem]{Remark}
\begin{document}

\maketitle
\begin{abstract}
We generalize the definition of the $cd$-index of an Eulerian poset to the class of semi-Eulerian posets. For simplicial semi-Eulerian Buchsbaum posets, we show that all coefficients of the $cd$-index are non-negative. This proves a conjecture of Novik for odd dimensional manifolds and extends it to the even dimensional case. 
\end{abstract}

\section{Introduction}

The $cd$-index is a non-commutative polynomial originally defined by Fine to condense the linear relations of the flag $f$-vector of a polytope, namely, the count of the number of flags of faces by dimension. In short, the $cd$-index is a set of coordinates for the smallest linear space containing  all flag $f$-vectors of all polytopes of a fixed dimension. The existence of these coordinates is equivalent to the generalized Dehn-Sommerville (GDS) equations that were discovered by Bayer and Billera in their seminal work on the topic \cite{BaBi}. In fact, those equations hold in greater generality and are a consequence of the fact that face posets of polytopes are Eulerian, i.e., that the Euler characteristic is locally and globally indistinguishable from the one of a sphere of the same dimension. There are numerous results on $cd$-indices in the literature (see for example \cite{Bayer-Survey}).

A bit more formally, the $cd$-index is a homogeneous polynomial in two noncommutative variables $c$ and $d$ of degrees $1$ and $2$ respectively. In the first paper that popularized the $cd$-index, Bayer and Klapper \cite{BaKla} conjectured that the $cd$-index of Gorenstein* posets has nonnegative coefficients. These are Eulerian posets whose order complex is Cohen-Macaulay and include face lattices of polytopes and of regular CW-decompositions of homology spheres. Quickly after the conjecture appeared Stanley \cite{Stanley94} proved its validity for spherically shellable posets, a class that includes all polytopes and for Gorenstein* simplicial posets. The full resolution of the conjecture came from the work of Karu \cite{Karu} using sheaves on posets to fully exploit the homological restrictions imposed on Gorenstein* posets. What is notable about this condition is that Bayer \cite{Bayer-Signs} showed later that for general Eulerian posets there are certain coefficients of the $cd$-index that can be arbitrarily negative, which means that the homological restrictions play an importante role in the positivity phenomenon.

 The main goal of this article is to define a suitable $cd$-index for semi-Eulerian posets. These are bounded graded posets such that the Euler characteristic of any interval not containing the maximal element coincides with the Euler characteristic of a sphere of the right dimension. In other words, semi-Eulerian posets are locally Eulerian posets and carry an additional number, their Euler characteristic, which may be different from that of spheres. Examples of this are face lattices of triangulations and of regular CW decompositions of (homology) manifolds. The $f$-vector theory for this class of posets exhibits many similarities to that of Eulerian posets, but the $cd$-index does not directly generalize. Nonetheless, the affine span of the collection of flag $f$-vectors of semi-Eulerian posets with a fixed dimension and Euler characteristic is parallel translate of the affine span of flag $f$-vectors of Euclidean posets, as observed independently by Ehrenborg \cite{Ehrenborg-KEUL} and Swartz \cite{Sw:SpheresToManifolds}. In fact, this affine span is described by a slight modification of the generelized Dehn-Sommerville equations of Bayer and Billera that takes into accound the additional data of the Euler characteristic. It is therefore reasonable to expect that a variant of the $cd$-index that extends the positivity theory exists. Nonetheless some care is needed: for example Ehrenborg \cite{Ehrenborg-KEUL} provided a certain definition that is natural from the point of view of (co)algebras, but has coefficients that are negative and sometimes rational instead of integral.

We propose a different approach following Swartz: namely a small modification of one of the entries of the flag $f$-vector of a semi-Eulerian poset $P$ (or equivalently of one single monomial of the usual flag $f$-polynomial $\chi_P(a,b)$) translates the flag $f$-vector of a semi-Eulerian poset to span of Eulerian posets admitting a $cd$-index. This translation is clearly not unique and a good justification of our choice is neeed. In addition to the appealing simplicity that it has at the level of $f$-vectors, our choice of translation leads to to an  identity for simplicial posets that generalizes the main trick of Stanley in his orignal proof that spherically shellable posets have a non-negative $cd$-index.
\begin{thmx}[\Cref{thm:cd}, \Cref{thm:CDvsH}]
    Let $P$ be a semi-Eulerian poset of rank $n+1$. The polynomial 
    \[\chi_P'(a,b):=\chi_P(a,b)+(\chi(\mathbb{S}^{n-1})-\chi(P))a\cdots ab\]
    admits a $cd$-index $\Phi_P(c,d)$. Moreover, if $P$ is simplicial we have that 
    \[
        \Phi_P(c,d) = \sum_{i=0}^{n-1} h_i(P)\check\Phi^n_i(c,d),
     \]
     with $\check\Phi^n_i$ as defined in \Cref{sect:cd def} and $(h_0(P), h_1(P), \dots h_n(P))$ is the usual $h$-vector.
\end{thmx}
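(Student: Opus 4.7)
The plan splits into the two assertions of the theorem. For the first, I would invoke the Bayer--Billera characterization: a homogeneous degree-$n$ polynomial in $a,b$ admits a $cd$-index exactly when it satisfies the system of generalized Dehn--Sommerville (GDS) equations. For an Eulerian poset these equations follow from the Euler relation on every interval $[x,y]$, including $[\hat 0,\hat 1]$. In a semi-Eulerian poset of rank $n+1$, every proper interval is Eulerian, so all GDS relations that depend only on proper intervals continue to hold; the only possible defect arises in the relations that see the Euler characteristic of $P$ itself. A direct inclusion--exclusion analysis of the flag $f$-polynomial shows that this defect is concentrated on the single monomial $a\cdots a b$ (of degree $n-1$ in $a$ and $1$ in $b$) and is proportional to $\chi(P)-\chi(\mathbb{S}^{n-1})$. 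Adding the correction $(\chi(\mathbb{S}^{n-1})-\chi(P))\,a\cdots a b$ thus cancels the defect, so $\chi_P'(a,b)$ satisfies the full GDS system and hence admits a $cd$-index $\Phi_P$.

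For the second assertion, I would adapt Stanley's strategy for simplicial Eulerian posets. Any simplicial poset $P$ of rank $n+1$ has a universal expansion
\[
\chi_P(a,b)=\sum_{i=0}^{n} h_i(P)\,\psi^n_i(a,b)
\]
in a basis of $ab$-polynomials $\psi^n_i$ depending only on $n$ and $i$; this is the classical flag-to-$h$ identity in the simplicial setting. In the Eulerian case one has $h_n=1$ together with Dehn--Sommerville, and collecting terms rewrites this expansion as a $cd$-expansion whose coefficients are exactly the polynomials $\check\Phi^n_i$ defined earlier in the paper. In the semi-Eulerian case the same machinery applies verbatim, except that $h_n(P)$ need not equal $1$. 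The correction term is chosen so that replacing $\chi_P$ by $\chi_P'$ has the effect of replacing $h_n(P)$ by $1$ in the expansion: indeed, using the simplicial $h$-identity one checks that the coefficient of $a\cdots a b$ in $\chi_P$ depends on $h_n$ linearly and that the shift $(\chi(\mathbb{S}^{n-1})-\chi(P))$ is precisely the quantity needed to promote this coefficient to its Eulerian value. The desired identity $\Phi_P=\sum_{i=0}^{n-1} h_i(P)\,\check\Phi^n_i(c,d)$ then follows from the Eulerian computation applied to $\chi_P'$.

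I expect the main obstacle to lie in the bookkeeping for the simplicial case, namely in verifying that the single-monomial correction $(\chi(\mathbb{S}^{n-1})-\chi(P))\,a\cdots a b$ matches exactly the $h$-level shift $h_n(P)\mapsto 1$ inside the full $ab$-expansion. This step requires an explicit handle on $\psi^n_n(a,b)$ and a careful matching of the flag and Euler-characteristic sides of the identity, using that $(-1)^{n-1}(\chi(P)-\chi(\mathbb{S}^{n-1}))$ records precisely the failure of the top Dehn--Sommerville symmetry for the $h$-vector. Once this identification is secured, the existence assertion reduces to a finite GDS verification, while the simplicial formula reduces directly to Stanley's Eulerian computation applied to the translated polynomial $\chi_P'$.
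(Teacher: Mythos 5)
Your treatment of the existence statement follows the same route as the paper (\Cref{prop:existenceCD} together with \Cref{prop:DS semiEu}): the only defective relation is the Euler relation for $S=\emptyset$, and the single-monomial correction repairs it. One verification you omit: the modified entry $f'_{\{n\}}$ also appears on the right-hand side of the generalized Dehn--Sommerville relation attached to $S=\{n\}$ (the case $i=0$, $k=n$), so you must check that changing $f_{\{n\}}$ does not break that relation. It does not, but only because for $n$ odd its right-hand coefficient is $1-(-1)^{n-1}=0$ (and for $n$ even $P$ is already Eulerian); this should be said explicitly.

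For the second statement there is a genuine gap, and it sits exactly where you locate the ``bookkeeping''. Your plan is to view the correction as the $h$-level shift $h_n(P)\mapsto 1$ and then quote Stanley's Eulerian computation for $\chi'_P$. This fails for two reasons. First, by Klee's theorem (\cite{Klee_1964}, see also \cite[Theorem 3.1]{Sw:SpheresToManifolds}) the Dehn--Sommerville symmetry of a semi-Eulerian simplicial poset fails in \emph{every} pair $(h_i,h_{n-i})$, by an amount proportional to $\binom{n}{i}\bigl(\chi(P)-\chi(\mathbb{S}^{n-1})\bigr)$, not only at the top: e.g.\ for the $7$-vertex torus $h(P)=(1,4,10,-1)$, and replacing $h_3$ by $1$ gives $(1,4,10,1)$, which still violates $h_1=h_2$; so the shifted $h$-vector is not in the span of $h$-vectors of Eulerian simplicial posets and \cite[Theorem 3.1]{Stanley94} cannot be invoked for it. Second, the correction $(\chi(\mathbb{S}^{n-1})-\chi(P))a\cdots ab$ changes only the flag entry $f_{\{n\}}$, whereas any genuinely simplicial modification of the $h$-vector would change all flag entries $f_S$ with $n\in S$ by the corresponding multinomial multiples; consequently $\chi'_P(a,b)$ does \emph{not} lie in the affine span of chain polynomials of Eulerian simplicial posets, so there is no ``Eulerian computation applied to $\chi'_P$'' available. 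The paper bridges precisely this discrepancy: it subtracts from $h(P)$ the vector $\mathbf{w}$ supported in degrees $i>\lfloor n/2\rfloor$ (so that Klee's relations are restored), applies Stanley's formula to the resulting vector, and then proves the technical identity of \Cref{lem: identity hard}, which shows that the leftover flag-level difference between $\chi'_P$ and that Eulerian surrogate has $cd$-index $(\chi(P)-\chi(\mathbb{S}^{n-1}))\sum_{i>\lfloor n/2\rfloor}(-1)^i\binom{n}{i}\check\Phi^n_i$, exactly cancelling the $\mathbf{w}$-terms. Some substitute for this lemma is indispensable; without it your reduction asserts an identification that is simply false.
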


The identiy above will be a key step in for the next main theorem. We remark that it is actually non-trivial: if one wants the identity to hold, then the translation of span of the flag $f$-vectors of simplicial semi-Eulerian posets of a fixed Euler characteristic is not the span of $f$-vectors of Eulerian simplicial posets. Thus using the result of Stanley requires lengthy computations. The consequences of this identity, together with a careful analysis of a family of polynomials which we identify in \Cref{sec : P poly}, turn out to be quite interesting and lead to the main result of the article. 

\begin{thmx}[\Cref{thm:BoundPhi}]\label{thm: B}
    The coefficients of the $cd$-index of a semi-Eulerian Buchsbaum simplicial poset are nonnegative.
\end{thmx}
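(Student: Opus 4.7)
The plan is to combine the explicit formula from Theorem~A with two positivity statements. For a simplicial semi-Eulerian poset $P$ of rank $n+1$, Theorem~A gives
\[
\Phi_P(c,d) = \sum_{i=0}^{n-1} h_i(P)\,\check\Phi^n_i(c,d),
\]
so it suffices to exhibit the right-hand side as a nonnegative combination of polynomials with nonnegative $cd$-coefficients.

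The first step is to establish that each $\check\Phi^n_i(c,d)$ itself has nonnegative $cd$-coefficients. These are precisely the family singled out in \ref{sec : P poly}, so I would exploit their recursive description there. A shelling-style induction modeled on Stanley's original proof for spherically shellable simplicial posets should realize $\check\Phi^n_i$ as an explicit positive integer combination of $cd$-monomials, with the recursion on $n$ packaging the local contributions in a manifestly positive way.

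The second step is to handle the coefficients $h_i(P)$, which is subtler because a Buchsbaum simplicial complex need not have a nonnegative $h$-vector. The natural remedy is Schenzel's formula, which expresses the classical $h$-vector in terms of a nonnegative $h'$-vector and alternating sums of reduced Betti numbers $\tilde\beta_{j-1}$. I would substitute Schenzel's formula into $\sum h_i \check\Phi^n_i$ and use the semi-Eulerian hypothesis, which constrains the local Betti numbers to those of spheres and packages the global correction into the single parameter $\tilde\chi(P)$, to rewrite the sum in the form
\[
\sum_i h'_i(P)\,\psi^n_i(c,d) \;+\; \tilde\chi(P)\cdot R^n(c,d)
\]
for new polynomials $\psi^n_i$ and $R^n$.

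The main obstacle is proving that the polynomials $\psi^n_i$ obtained from this change of basis are still $cd$-nonnegative, and that the residual $R^n$ is absorbed cleanly. This is exactly where the translation $(\chi(\mathbb{S}^{n-1}) - \chi(P))\,a\cdots ab$ in the definition of $\chi'_P$ becomes essential: it was engineered so that the Euler characteristic deficit of $P$ cancels precisely the Schenzel-type correction arising from the Betti numbers of a locally spherical complex. Verifying this cancellation, and simultaneously checking the $cd$-nonnegativity of the $\psi^n_i$, will require a careful manipulation of the identities from \ref{sec : P poly}, and is the real content of the argument. Once both pieces are in place, Theorem~B follows by substituting Schenzel's nonnegativity of $h'_i(P)$ and the nonnegativity established for $\check\Phi^n_i$ (equivalently, for $\psi^n_i$) into the rewritten sum.
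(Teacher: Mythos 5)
Your overall strategy coincides with the paper's: start from the identity $\Phi_P(c,d)=\sum_{i=0}^{n-1}h_i(P)\check\Phi^n_i(c,d)$ of \Cref{thm:CDvsH}, compensate the possible negativity of the $h_i$ of a Buchsbaum simplicial poset by a Schenzel/Novik--Swartz type lower bound in terms of Betti numbers, and reduce the theorem to the nonnegativity of certain universal polynomials obtained by regrouping. However, your reduction is not set up correctly. The relevant inequality for simplicial posets is \Cref{thm:NovSwa} (the Novik--Swartz socle bound, not Schenzel's formula, which is stated for simplicial complexes), and it involves each Betti number $\widetilde\beta_{j-1}(\Delta(\overline{P});\mathbb{F})$ separately; after substituting it into the sum, the residual is $\sum_{j\ge 2}\widetilde\beta_{j-1}\,[w]P^n_j$ with $P^n_j=\sum_{i\ge j}(-1)^{i-j}\binom{n}{i}\check\Phi^n_i$, and it does not collapse to a single correction proportional to $\widetilde\chi(P)$. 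Moreover, the translation term $(\chi(\mathbb{S}^{n-1})-\chi(P))a\cdots ab$ plays no role at this stage: it was already absorbed in proving \Cref{thm:cd} and \Cref{thm:CDvsH}, and it does not ``cancel'' any Schenzel-type correction, which concerns fixed polynomials independent of $P$.

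More importantly, the step you defer as ``the main obstacle''---the $cd$-nonnegativity of the regrouped polynomials---is exactly the core of the theorem, and your proposal contains no argument for it. Plain nonnegativity of the $\check\Phi^n_i$ (classical, via André permutations, and in any case not what \Cref{sec : P poly} is about) is far from sufficient, because the $P^n_j$ are alternating sums of the $\check\Phi^n_i$ with large binomial coefficients; indeed $P^n_0$ and $P^n_1$ genuinely have negative coefficients. The paper needs the quantitative bounds $[wc]\check\Phi^n_0\ge 2^m$ and $[wd]\check\Phi^n_j\ge 2^m$ of \Cref{thm: Andre}, proved via André permutations and the Ehrenborg--Readdy derivation, and then the delicate induction of \Cref{thm: PnjPositivity}, which also has to control the negative contributions coming from $P^{n-1}_0$ via the closed formula of \Cref{lem: Pn0}. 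Without an argument establishing that positivity, your proposal is a reasonable outline of the easy reduction but leaves the essential difficulty untouched.
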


The theorem above applies for example when the poset has a geometric realization that is a homology manifold of a field. If the poset is homeomorphic to an odd dimensional manifold, the theorem answers a question of Novik \cite{Novik:cd} about . Even more, \Cref{thm:BoundPhi} gives stronger lower bounds for the coefficients of the $cd$-index which also take into account the topology of the order complex of $P$.

Moreover, it has interesting consequences on the face numbers of the order complex $\Delta(\overline{P})$ of an Eulerian Buchsbaum simplicial poset $P$ of odd rank. In fact, the \emph{$\gamma$-polynomial} of $\Delta(\overline{P})$, whose coefficients $\gamma_i(\Delta(\overline{P}))$ are particular linear combinations of its face numbers, can be computed simply as an evaluation of $\Phi_P(c,d)$ (see \cite[Section 2.3]{Gal}). As a consequence of \Cref{thm: B} we obtain the following.
\begin{thmx}[\Cref{cor : manifolds}]\label{thm: C}
    Let $P$ be an Eulerian Buchsbaum simplicial poset of rank $2k+1$. Then 
    \[\gamma_i(\Delta(\overline{P}))\geq 0,\]
    for every $i=0,\dots,k$. In particular,
    \[
     (-1)^k\sum_{i=0}^{2k} (-1)^{i}h_i(\Delta(\overline{P}))\geq 0.
    \]
\end{thmx}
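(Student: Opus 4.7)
The plan is to deduce Theorem C directly from Theorem B by invoking Gal's realization of the $\gamma$-polynomial as a specialization of the $cd$-index. Since $P$ is Eulerian, one has $\chi(P) = \chi(\mathbb{S}^{2k-1})$, so the correction term in Theorem A vanishes and $\Phi_P(c,d)$ coincides with the classical $cd$-index of $P$. Being in particular a semi-Eulerian Buchsbaum simplicial poset, $P$ satisfies the hypotheses of Theorem B, and so every coefficient $\phi_w$ of $\Phi_P(c,d)$ is nonnegative.

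Next, from $\chi_P(a,b) = \Phi_P(a+b, ab+ba)$ together with the standard identity $\chi_P(1,t) = h(\Delta(\overline{P}); t)$ (obtained by summing flag $h$-numbers by rank-set size), I get
\[
h(\Delta(\overline{P}); t) = \Phi_P(1+t,\, 2t).
\]
Because $\Phi_P$ is homogeneous of degree $2k$, every $cd$-monomial $w$ with $i$ occurrences of $d$ contributes $2^{i}\, t^{i}(1+t)^{2k-2i}$, a palindromic polynomial of degree $2k$. Matching term-by-term with the $\gamma$-expansion $h(t) = \sum_i \gamma_i\, t^i(1+t)^{2k-2i}$ (which is well-defined because the Dehn--Sommerville relations for the Eulerian poset $P$ force $h$ to be palindromic) yields
\[
\gamma_i(\Delta(\overline{P})) = 2^{i} \sum_{\substack{w \\ d(w)=i}} \phi_w,
\]
where $d(w)$ denotes the number of $d$'s appearing in $w$. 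Combined with the first step, this gives $\gamma_i(\Delta(\overline{P})) \geq 0$ for every $i=0,\dots,k$.

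For the displayed alternating-sum inequality, I would evaluate the $\gamma$-expansion at $t=-1$: the factor $(1+t)^{2k-2i}$ vanishes for every $i\neq k$, leaving $h(\Delta(\overline{P}); -1) = (-1)^k \gamma_k$. Since $h(\Delta(\overline{P}); -1) = \sum_{i=0}^{2k}(-1)^i h_i(\Delta(\overline{P}))$, multiplying by $(-1)^k$ yields
\[
(-1)^k \sum_{i=0}^{2k} (-1)^i h_i(\Delta(\overline{P})) = \gamma_k \geq 0,
\]
which is the second claim. The main obstacle is essentially absent here, as Theorem B does the heavy lifting; only the bookkeeping of the Gal substitution (and the verification that the formula from \cite{Gal} applies verbatim to our Eulerian $P$) needs to be carried out carefully.
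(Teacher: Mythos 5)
Your proposal is correct and follows essentially the same route as the paper: it likewise deduces the statement by combining Gal's identity $\sum_i \gamma_i(\Delta(\overline{P}))\,t^i=\Phi_P(1,2t)$ (equivalently $h(\Delta(\overline{P});t)=\Phi_P(1+t,2t)$, which you re-derive rather than cite) with the nonnegativity of the coefficients of $\Phi_P(c,d)$ from Theorem B, and the Charney--Davis inequality is exactly $\gamma_k\geq 0$ obtained by setting $t=-1$. One small slip to fix: in the paper's conventions $\Phi_P(a+b,ab+ba)=\Psi_P(a,b)=\chi_P(a-b,b)$, and it is $\Psi_P(1,t)$, not $\chi_P(1,t)$ (the latter gives the $f$-polynomial of the order complex), that equals $h(\Delta(\overline{P});t)$; your two misstatements cancel, so the displayed identity and the remainder of the argument are fine.
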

The nonnegativity of the coefficients of the $\gamma$-polynomial has been conjectured for flag homology spheres. \Cref{thm: C} shows that this phenomenon holds, for instance, for barycentric subdivisions of odd-dimensional manifolds as well. The second statement in \Cref{thm: C} is meant to underline the most interesting of the linear inequalities which correspond to the nonnegativity of the numbers $\gamma_i(\Delta(\overline{P}))$, which goes back to a slightly older conjecture of Charney and Davis \cite{CD}.
\section{Preliminaries}
The aim of this section is to introduce required notion and provide background on the (classical) $cd$-index. 

We start with some poset terminology. In the following, without further mention, we always assume that a finite graded poset has a unique minimal and maximal element $\hat{0}$ and $\hat{1}$, respectively. 
A finite graded poset $P$ is called \emph{semi-Eulerian} if every proper interval, $[s,t]\neq [\hat{0},\hat{1}]=P$ with more than one element has the same number of elements of odd rank as of even rank. A semi-Eulerian poset is called \emph{Eulerian} if, in addition, $P$ has the same number of elements of odd rank as of even rank. The condition that the number of elements of odd rank equals the number of elements of even rank in an interval $[s,t]$ can be reinterpreted as follows: If $\mathrm{rk}$ denotes the rank function of $P$ and $\mu_P$ its M\"obius function, then $\mu_P(s,t)=(-1)^{\mathrm{rk}(t)-\mathrm{rk}(s)}$. In particular, this means that, locally, semi-Eulerian posets have the Euler characteristic of a sphere of the corresponding dimension. In the Eulerian case this is also true globally.  Examples of Eulerian posets are provided by face posets of polytopes, of regular CW spheres (in particular, simplicial spheres) and of odd-dimensional simplicial manifolds after adjoining a maximal element $\hat{1}$ (or more generally, of Buchsbaum complexes). Face posets of regular CW complexes homeomorphic to even-dimensional manifolds are not Eulerian in general but semi-Eulerian. 

Let $P$ be a poset of rank $n+1$. For a set $S=\{\ell_1<\ell_2<\cdots <\ell_k\}\subseteq [n]$, we set 
\[
f_P(S)\coloneqq \#\{\hat{0}<x_1<x_2<\cdots <x_k<\hat{1}~:~x_i\in P,\; \mathrm{rk}(x_i)=\ell_i \mbox{ for } 1\leq i\leq k\}, 
\]
i.e., $f_P(S)$ counts the number of chains in $P$ whose elements have the ranks specified by $S$. The vector $(f_P(S))_{S\subseteq [n]}$ is called the \emph{flag $f$-vector} of $P$. The alternating sum of the numbers of length $1$ chains is called the \emph{(reduced) Euler characteristic} $\chi(P)$ of $P$. More precisely,
\[
\chi(P)=\sum_{j=1}^n(-1)^{j-1}f_{\{j\}}(P).
\]
Flag $f$-vectors of Eulerian posets satisfy the so-called \emph{generalized Dehn-Sommerville relations}:

\begin{theorem}\cite[Theorem 2.1]{BaBi}\label{thm:gen DS}
  Let $P$ be an Eulerian poset of rank $n+1$ and $S\subseteq [n]$. If $\{i,k\}\subseteq S\cup\{0,n+1\}$ for $i<k-1$ and $S$ contains no $i<j<k$, then
  \[
  \sum_{j=i+1}^{k-1}(-1)^{j-i-1}f_{S\cup \{j\}}(P)=f_S(P)(1-(-1)^{k-i-1}).
  \]
\end{theorem}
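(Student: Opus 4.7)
The plan is to prove the identity by a double-counting argument that, for each fixed chain in $P$ with rank set $S$, isolates a subsum which is controlled by the Eulerian condition on a single closed interval of $P$.

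First, I would rewrite the left-hand side by decomposing flags according to their restriction to the ranks prescribed by $S$. Fix a chain $C : \hat{0} < x_{\ell_1} < x_{\ell_2} < \cdots < x_{\ell_m} < \hat{1}$ with $\mathrm{rk}(x_{\ell_p}) = \ell_p$, so that $C$ contributes to $f_S(P)$. Because $S$ contains no rank strictly between $i$ and $k$, the hypothesis $\{i,k\} \subseteq S \cup \{0,n+1\}$ picks out two elements $u,v \in C \cup \{\hat 0,\hat 1\}$ with $\mathrm{rk}(u)=i$, $\mathrm{rk}(v)=k$, and adjacent in $C \cup \{\hat 0,\hat 1\}$. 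Extending $C$ to a chain counted by $f_{S \cup \{j\}}(P)$ for $i<j<k$ amounts exactly to choosing an element $y \in (u,v)$ of rank $j$ in $P$, equivalently of rank $j-i$ in the closed interval $[u,v]$. This yields the rewriting
\[
\sum_{j=i+1}^{k-1}(-1)^{j-i-1} f_{S\cup\{j\}}(P) \;=\; \sum_{C} \sum_{m=1}^{k-i-1} (-1)^{m-1} f_{[u,v]}(\{m\}),
\]
where the outer sum runs over the $f_S(P)$ chains $C$ with rank set $S$ and $u,v$ are the elements of $C \cup \{\hat 0,\hat 1\}$ of ranks $i$ and $k$.

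The second step is to evaluate the inner alternating sum using the Eulerian hypothesis. Since $[u,v]$ is a proper interval of $P$ (or equals $P$ itself, which is Eulerian too), the defining property gives that the number of odd-rank elements and even-rank elements of $[u,v]$ coincide, i.e., $\sum_{m=0}^{k-i} (-1)^m f_{[u,v]}(\{m\}) = 0$, with the convention that the extremes contribute $(-1)^0$ and $(-1)^{k-i}$. Isolating the boundary contributions and multiplying by $-1$ gives
\[
\sum_{m=1}^{k-i-1} (-1)^{m-1} f_{[u,v]}(\{m\}) \;=\; 1 + (-1)^{k-i} \;=\; 1 - (-1)^{k-i-1}.
\]

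The final step is to observe that this value is independent of the chain $C$, hence the outer sum produces exactly the factor $f_S(P)$ with the claimed coefficient $1-(-1)^{k-i-1}$, completing the proof. The main subtlety to handle carefully is the bookkeeping for boundary values $i=0$ and $k=n+1$, where $u$ or $v$ coincides with $\hat 0$ or $\hat 1$ instead of an element of $C$; this is exactly why the hypothesis is phrased with $S \cup \{0,n+1\}$. Aside from this, the proof is essentially a one-line consequence of the local Euler relation applied to a single rank-$(k-i)$ interval.
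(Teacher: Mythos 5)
Your proof is correct, and it is essentially the classical argument: the paper does not prove this statement itself but cites it from Bayer and Billera, whose proof proceeds exactly as you do, by splitting the chains counted by $f_{S\cup\{j\}}(P)$ over the chains with rank set $S$ and applying the Euler relation (equivalently $\mu(u,v)=(-1)^{k-i}$) to the single rank-$(k-i)$ interval $[u,v]$, which is Eulerian since $P$ is. Your handling of the boundary cases $i=0$, $k=n+1$ (where $u$ or $v$ is $\hat 0$ or $\hat 1$, and possibly $[u,v]=P$) is exactly the point where the full Eulerian hypothesis, rather than semi-Eulerianness, is needed, consistent with \Cref{prop:DS semiEu}.
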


Moreover, these relations can be used to show that the affine space spanned by the flag $f$-vectors of rank $n+1$ Eulerian posets has dimension $F_{n}-1$, where $(F_n)_{n\geq 0}$ is the \emph{Fibonacci sequence} with $F_0=F_1=1$. Remarkably, flag $f$-vectors of convex polytopes, i.e., of their face posets, suffice to span this space \cite[Theorem 2.6]{BaBi}.

Instead of the flag $f$-vector, one often considers the following invertible linear transformation. If $P$ is a rank $n+1$ poset with flag $f$-vector $(f_S(P))_{S\subseteq [n]}$, the \emph{flag $h$-vector} $(h_S(P))_{S\subseteq [n]}$ of $P$ is defined via
\begin{equation}\label{eq : flag f to flag h}
    h_S(P)=\sum_{T\subseteq S}(-1)^{|S\setminus T|}f_T(P).
\end{equation}

One can encode the flag $h$-vector into a single polynomial in non-commuting variables $a$ and $b$ as follows: For a subset $S\subseteq [n]$, we write $u_S$ for the monomial $u_1u_2\cdots u_n$ with $u_i=a$ if $i\notin S$ and $u_i=b$ if $i\in S$. The polynomial
\[
\Psi_P(a,b)=\sum_{S\subseteq [n]}h_S(P)u_S
\]
is called the \emph{$ab$-polynomial} of $P$. Setting $\chi_P(a,b)=\sum_{S\subseteq [n]}f_S(P)u_S$, it is not hard to see that
\[
\Psi_P(a,b)=\chi_P(a-b,b).
\]
In the following, we will refer to $\chi_P(a,b)$ also as the \emph{chain polynomial} of $P$. 
We now recall the definition of the $cd$-index for Eulerian posets. 
\begin{definition}
Let $P$ be a graded Eulerian poset of rank $n+1$. The \emph{$cd$-index} of $P$ is the polynomial $\Phi_P(c,d)$ in non-commuting variables $c$ and $d$ such that
\[
\Phi_P(a+b,ab+ba)=\Psi_P(a,b).
\]
\end{definition}
Indeed, Fine (see \cite{BaKla,Stanley94}) showed that such a polynomial exists (and is unique) for Eulerian posets. Setting $\deg(c)=1$ and $\deg(d)=2$, the $cd$-index becomes a polynomial of total degree $n$ in $c$ and $d$. So as to be able to generalize the $cd$-index to semi-Eulerian posets, we need to introduce some further notation.

Given a polynomial $p(a,b)$ in non-commuting variables $a$ and $b$, we say that \emph{$p$ admits a $cd$-index} if $p(a-b,b)$ can be be written as a (unique) polynomial $q(c,d)$ with $c=a+b$ and $d=ab+ba$, i.e., $p(a-b,b)=q(a+b,ab+ba)$. Similarly, we say that \emph{$p$ satisfies the generalized Dehn-Sommerville relations} if its sequence of coefficients satisfies the relations in \Cref{thm:gen DS}. The next proposition, which will be crucial, is a reformulation of \cite[Theorem 4]{BaKla} (see also \cite{Fine}).

\begin{proposition}\label{prop:existenceCD}
 A polynomial $p$ in non-commuting variables $a$ and $b$ admits a $cd$-index if and only if it satisfies the generalized Dehn-Sommerville relations.
\end{proposition}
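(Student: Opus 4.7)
The plan is a dimension-count argument. Let $U_n$ denote the space of homogeneous degree-$n$ polynomials in the non-commuting variables $c, d$ (with $\deg c = 1$, $\deg d = 2$), and $V_n$ the space of homogeneous degree-$n$ polynomials in non-commuting $a, b$. Define the linear map $\Lambda: U_n \to V_n$ by $\Lambda(q) = q(a+b, ab+ba)$, and let $\phi: V_n \to V_n$ be the invertible linear self-map $\phi(p)(a,b) = p(a-b, b)$, which realizes the $f$-to-$h$ transform of \eqref{eq : flag f to flag h}. By the definition recalled in the excerpt, $p$ admits a $cd$-index if and only if $\phi(p) \in \mathrm{Im}(\Lambda)$. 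Letting $W_n \subseteq V_n$ denote the subspace of polynomials whose coefficient sequence $(f_S)$ satisfies the generalized Dehn--Sommerville relations of \Cref{thm:gen DS}, the proposition is equivalent to the equality $\mathrm{Im}(\Lambda) = \phi(W_n)$.

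I would then carry out three steps. First, the map $\Lambda$ is injective: fixing a monomial order on $ab$-words that tracks the positions of $b$'s arising from each $d$-letter (each $d$ forces exactly one $b$ in two adjacent positions), distinct $cd$-monomials produce distinct leading $ab$-monomials. Hence $\dim \mathrm{Im}(\Lambda)$ equals the number of $cd$-monomials of degree $n$, which is $F_n$ in the Fibonacci convention $F_0 = F_1 = 1$ of the excerpt. Second, Bayer--Billera \cite[Thm 2.1]{BaBi} show $\dim W_n = F_n$ by analyzing the gen. DS relations directly inside the $ab$-polynomial ring, without invoking any $cd$-theory. Third, I would establish the containment $\mathrm{Im}(\Lambda) \subseteq \phi(W_n)$: for each $cd$-monomial $w$, the polynomial $\phi^{-1}(\Lambda(w))$ should have $f$-coefficients satisfying gen. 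DS. This is a combinatorial identity that I would prove by induction on the number of $d$-letters in $w$, matching the symmetry imposed by each $d$ to the precise alternating-sum shape of a gen. DS relation over a suitable rank-interval in which no intermediate rank appears in $S$.

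Combining the three steps, $\mathrm{Im}(\Lambda)$ and $\phi(W_n)$ are both $F_n$-dimensional and one is contained in the other, so they coincide, giving the proposition. The main obstacle will be step three: translating the algebraic ``one $b$ per $d$'' pattern uniformly into the gen. DS relations is delicate, particularly because the surrounding $c,d$-context varies and one must keep careful track of the rank set $S$. An alternative route that sidesteps this identity is to invoke Stanley's explicit construction \cite{Stanley94} of $cd$-indices for spherically shellable face lattices: together with Bayer--Billera's Eulerian polytope examples, these yield $F_n$ linearly independent elements of $\mathrm{Im}(\Lambda) \cap \phi(W_n)$, from which the equality follows by the dimension counts in steps one and two, provided the Bayer--Billera dimension bound on $W_n$ is drawn from the gen. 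DS relations alone so as to avoid any circularity.
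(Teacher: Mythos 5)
The paper does not actually prove this proposition: it is stated as a reformulation of \cite[Theorem 4]{BaKla} (see also \cite{Fine}), so any argument you give is necessarily a different route from the paper's. Your dimension-count outline is sound in structure: the images of the $F_n$ many degree-$n$ $cd$-monomials span $\mathrm{Im}(\Lambda)$, Bayer--Billera's analysis of the relations gives $\dim W_n=F_n$ without any $cd$-machinery, and equality of two $F_n$-dimensional subspaces follows once one has either the containment $\mathrm{Im}(\Lambda)\subseteq\phi(W_n)$ or an $F_n$-dimensional family of common elements. Be aware, however, that your Step 3 is not a routine verification: proving that every $cd$-monomial pulls back to an $ab$-polynomial satisfying the generalized Dehn--Sommerville relations is exactly the substantive direction of Bayer--Klapper's theorem, so in the primary route the real work remains to be done, not just ``delicate bookkeeping.'' Your fallback is the cleaner way to assemble the statement from the literature: flag $f$-vectors of $n$-dimensional polytopes linearly span an $F_n$-dimensional subspace of $W_n$ by \cite[Theorem 2.6]{BaBi} (whose proof is independent of any $cd$-considerations), and their $ab$-indices lie in $\mathrm{Im}(\Lambda)$ via Stanley's explicit construction \cite{Stanley94} applied to polytopes, which are spherically shellable; together with $\dim\mathrm{Im}(\Lambda)\le F_n$ (injectivity of $\Lambda$ is only needed for uniqueness of the $cd$-index, not for existence) this forces $\mathrm{Im}(\Lambda)=\phi(W_n)$, which is the asserted equivalence. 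The one point you must verify to avoid circularity is that the construction you quote from \cite{Stanley94} establishes \emph{existence} of the $cd$-index for shellable spheres on its own, rather than invoking Fine's existence theorem and only proving nonnegativity; with that checked, the fallback argument is complete, while your primary Step 3, as written, is still an open gap.
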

To simplify notation (in particular in \Cref{sect:cd def}) we will also say that a vector $(b_S)_{\S\subseteq [n]}$ admits a $cd$-index, if the associated polynomial $\sum_{S\subseteq [n]}b_Su_S$ does.\\

Sometimes we will speak about (semi)-Eulerian simplicial complexes, rather than (semi)-Eulerian posets. A simplicial complex $\Delta$ is called \emph{semi-Eulerian} or \emph{Eulerian} if its face poset with a unique maximal element $\hat{1}$ adjoined, $P(\Delta)$, has this property. The flag $f$-vector, flag $h$-vector and \emph{$cd$-index} of $\Delta$ (if it exists) is then defined to be the corresponding invariant for $P(\Delta)$. More precisely, we set
\[
f_S(\Delta)=f_S(P(\Delta)), \quad h_S(\Delta)=h_S(P(\Delta))\quad \mbox{and}\quad \Phi_\Delta(c,d)=\Phi_{P(\Delta)}(c,d),
\]
 where $\dim\Delta=n-1$. 

Given a simplicial complex $\Delta$ and a face $F\in \Delta$, the \emph{link of $F$ in $\Delta$} is the subcomplex $\mathrm{lk}_\Delta(F)=\{G\in \Delta~:~G\cup F\in \Delta; G\cap F=\emptyset\}$. A pure simplicial complex $\Delta$ is called \emph{Buchsbaum} (over a field $\mathbb{F}$) if $\widetilde{H}_i(\mathrm{lk}_\Delta(F);\mathbb{F})=0$ if $i\neq\dim \Delta-|F|$ (cf., \cite[Theorem 3.2]{NS-Socles}), where $\widetilde{H}_i(\mathrm{lk}_\Delta(F);\mathbb{F})$ denotes the reduced simplicial homology over $\mathbb{F}$. 
A simplicial poset $P$ with a unique minimal and maximal element $\hat{0}$ and $\hat{1}$, respectively, is called \emph{Buchsbaum} over $\mathbb{F}$ if the order complex $\Delta(\overline{P})$ of $\overline{P}=P\setminus\{\hat{0},\hat{1}\}$ is a Buchsbaum simplicial complex over $\mathbb{F}$.

A simplicial complex $\Delta$ is a \emph{($\mathbb{F}$-)homology manifold} if 
\[
\widetilde{H}_i(\mathrm{lk}_\Delta(F);\mathbb{F})=\begin{cases}
    \mathbb{F},\quad \mbox{if } i=\dim\Delta-|F|=\dim\mathrm{lk}_{\Delta}(F),\\
    0,\quad \mbox{otherwise}.
\end{cases}
\] 
Homology manifolds include simplicial manifolds, i.e., triangulations of topological manifolds. Clearly, every $\mathbb{F}$-homology manifold is a Buchsbaum complex over $\mathbb{F}$.

Given an $(n-1)$-dimensional simplicial complex, we let $f_i(\Delta)$ denote the number of $i$-dimensional faces of $\Delta$. The vector $f(\Delta)=(f_{-1}(\Delta),f_0(\Delta),\ldots,f_{n-1}(\Delta))$ is called the \emph{$f$-vector} of $\Delta$. The \emph{$h$-vector} $h(\Delta)=(h_0(\Delta),h_1(\Delta),\ldots,h_n(\Delta))$, where 
\[
h_j(\Delta)=\sum_{i=0}^j(-1)^{j-i}\binom{n-i}{j-i}f_{i-1}(\Delta),
\]
is called the $h$-vector of $\Delta$. The \emph{Euler characteristic} $\chi(\Delta)$ of $\Delta$ is defined as $\chi(\Delta)=\sum_{i=0}^{n-1}(-1)^if_i(\Delta)$. We note that we have $\chi(\Delta)=\chi(P(\Delta))$, by definition. The \emph{$i$-th (reduced) Betti number} $\widetilde{\beta}_i(\Delta;\mathbb{F})$ (over $\mathbb{F}$) is defined as $\widetilde{\beta}_i(\Delta;\mathbb{F})=\dim_\mathbb{F}\widetilde{H}_i(\Delta;\mathbb{F})$. 

The following lower bound for the $h$-numbers of Buchsbaum simplicial posets will be crucial in \Cref{sec: nonnegative}

\begin{theorem}\cite[Theorem 6.4]{NS-Socles}\label{thm:NovSwa}
  Let $P$ be a  Buchsbaum simplicial poset (with $\hat{0}$ and $\hat{1}$) of rank $n+1$ (over $\mathbb{F}$). Then 
  \[
  h_i(P)\geq \binom{n}{i}\sum_{j=1}^i (-1)^{i-j}\widetilde{\beta}_{j-1}(\Delta(\overline{P});\mathbb{F}).
  \]
\end{theorem}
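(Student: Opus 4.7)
The plan is to use Stanley's face‑ring machinery for simplicial posets together with a Schenzel–type formula expressing the Hilbert function of a generic Artinian reduction in terms of the $h$–numbers of $P$ and the reduced Betti numbers of $\Delta(\overline{P})$. Once such a formula is in place, the lower bound is forced by the trivial nonnegativity of dimensions.

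First I would recall that Stanley's face ring $A_P$ of the simplicial poset $P$ is a standard graded $\mathbb{F}$–algebra of Krull dimension $n$ with Hilbert series
\[
\operatorname{Hilb}(A_P;t)=\frac{h_0(P)+h_1(P)t+\cdots+h_n(P)t^n}{(1-t)^n}.
\]
The hypothesis that $P$ is Buchsbaum over $\mathbb{F}$ is precisely the condition that $\Delta(\overline{P})$ is a Buchsbaum complex. A Reisner-type criterion, extended from complexes to simplicial posets, translates this into the statement that each local cohomology module $H^i_{\mathfrak{m}}(A_P)$ is concentrated in graded degree zero for $i<n$, and that $\dim_{\mathbb{F}}H^i_{\mathfrak{m}}(A_P)_0=\widetilde{\beta}_{i-1}(\Delta(\overline{P});\mathbb{F})$.

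Next I would enlarge the base field if necessary and pick a linear system of parameters $\Theta=\theta_1,\ldots,\theta_n\in(A_P)_1$. Forming the Artinian reduction $A=A_P/(\Theta)$, one iteratively applies the long exact sequences in local cohomology induced by multiplication by each $\theta_k$; because every $H^i_{\mathfrak{m}}(A_P)$ with $i<n$ is killed by $\mathfrak{m}$, their contributions to $\operatorname{Hilb}(A;t)$ can be tracked with binomial coefficients. The resulting Schenzel–type identity reads
\[
\dim_{\mathbb{F}} A_i = h_i(P)-\binom{n}{i}\sum_{j=1}^{i}(-1)^{i-j}\widetilde{\beta}_{j-1}(\Delta(\overline{P});\mathbb{F}),
\]
for $0\le i\le n$, and the inequality follows at once from $\dim_{\mathbb{F}} A_i\ge 0$.

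The main obstacle is the derivation of the Schenzel–type formula in the simplicial-poset setting. For simplicial complexes this is Schenzel's classical computation, which relies on the Buchsbaum property being equivalent to the local cohomology modules of the face ring being finite dimensional and concentrated in a single degree. For simplicial posets one must first set up the face ring and verify the analogue of Reisner's criterion, so that the same dimension identification holds; once this bridge from topology to commutative algebra is in place, the remaining bookkeeping via the Koszul long exact sequences is essentially the same as in the classical case.
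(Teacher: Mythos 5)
Your plan falls short at the decisive step: the ``Schenzel-type identity'' you write down is not Schenzel's formula, and the inequality you want does not follow from nonnegativity of a Hilbert function alone. For a Buchsbaum complex (or simplicial poset) of Krull dimension $n$, Schenzel's theorem gives
\[
\dim_{\mathbb{F}}\bigl(A_P/(\Theta)\bigr)_i \;=\; h_i(P)\;-\;\binom{n}{i}\sum_{j=1}^{i-1}(-1)^{i-j}\widetilde{\beta}_{j-1}(\Delta(\overline{P});\mathbb{F}),
\]
i.e.\ the Betti sum stops at $j=i-1$; the term $j=i$ does not appear. Consequently $\dim_{\mathbb{F}}A_i\ge 0$ only yields $h_i(P)\ge \binom{n}{i}\sum_{j=1}^{i-1}(-1)^{i-j}\widetilde{\beta}_{j-1}$, which is strictly weaker than the stated bound: the whole point of the cited Novik--Swartz theorem is the extra summand $+\binom{n}{i}\widetilde{\beta}_{i-1}$. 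A concrete counterexample to your identity: let $\Delta$ be the disjoint union of two $3$-cycles (a Buchsbaum complex, $n=2$), so $h=(1,4,1)$, $\widetilde{\beta}_0=1$, $\widetilde{\beta}_1=2$. A direct computation shows the generic Artinian reduction has Hilbert function $(1,4,2)$, matching Schenzel's formula above, whereas your identity predicts $(1,2,0)$ (already $\dim A_1=4\neq h_1-\binom{2}{1}\widetilde{\beta}_0=2$, since $\dim A_1$ is always the number of vertices minus $n$). Note also that the iterated long exact sequences produce correction terms with the signs as above, not with your signs; local cohomology being concentrated in degree $0$ makes the modules appear with alternating signs shifted by one.

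The missing ingredient is precisely the hard part of Novik--Swartz: they show that for $i<n$ the socle of the Artinian reduction $A=A_P/(\Theta)$ contains, in degree $i$, a subspace of dimension $\binom{n}{i}\widetilde{\beta}_{i-1}(\Delta(\overline{P});\mathbb{F})$ (built from the degree-zero local cohomology, i.e.\ from $\Theta$-annihilated elements), and then $h''_i:=\dim A_i-\binom{n}{i}\widetilde{\beta}_{i-1}\ge 0$, which combined with Schenzel's formula gives exactly the inequality in the statement (your formula is, in effect, the Hilbert function of $A$ modulo this socle subspace --- but exhibiting that subspace is the theorem, not bookkeeping). In the example above, the bound $h_1=4\ge 2$ is witnessed by a $2$-dimensional socle in degree $1$, not by vanishing of anything in the Koszul long exact sequences. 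So to make your approach work you would have to reprove the socle theorem (and its extension from simplicial complexes to simplicial posets, which is the content of Section 6 of the cited paper), rather than only Reisner's criterion and Schenzel's computation.
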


\section{The $cd$-index for a semi-Eulerian poset}\label{sect:cd def}
The goal of this section is to introduce a $cd$-index for semi-Eulerian posets. 
Inspired by \Cref{prop:existenceCD}, the main idea for achieving this is to modify the flag $f$-vector (and consequently, the flag $h$-vector) in such a way that it satisfies the generalized Dehn-Sommerville relations. The next proposition, which is a direct consequence of the proof of the generalized Dehn-Sommerville relations in \cite{BaBi}, is crucial:

\begin{proposition}\label{prop:DS semiEu}
  Let $P$ be a semi-Eulerian poset of rank $n+1$ and $S\subseteq [n]$. If $S\neq \emptyset$, $\{i,k\}\subseteq S\cup\{0,n+1\}$ for $i<k-1$ and $S$ contains no $i<j<k$, then
  \[
  \sum_{j=i+1}^{k-1}(-1)^{j-i-1}f_{S\cup \{j\}}(P)=f_S(P)(1-(-1)^{k-i-1}).
  \]
\end{proposition}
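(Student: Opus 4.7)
The plan is to run the Bayer--Billera proof of \Cref{thm:gen DS} essentially verbatim, observing that it only uses the Eulerian property on sub-intervals of $P$, never on $P$ itself. Fix $i < k-1$ as in the statement and set $x_0 = \hat{0}$, $x_{n+1} = \hat{1}$. For each chain $c = (x_{\ell_1} < \cdots < x_{\ell_m})$ of type $S = \{\ell_1 < \cdots < \ell_m\}$, the hypothesis that no element of $S$ lies strictly between $i$ and $k$ implies that chains of type $S\cup\{j\}$ with $i < j < k$ are in bijection with pairs $(c, y)$ where $y \in P$ satisfies $x_i < y < x_k$ and $\mathrm{rk}(y) = j$. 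Summing over $j$ yields
\[
\sum_{j=i+1}^{k-1}(-1)^{j-i-1} f_{S\cup\{j\}}(P) \;=\; \sum_{c}\ \sum_{j=i+1}^{k-1}(-1)^{j-i-1}\bigl|\{y \in P : x_i < y < x_k,\ \mathrm{rk}(y) = j\}\bigr|,
\]
where the outer sum is over the $f_S(P)$ chains of type $S$.

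Next I would invoke the basic local Eulerian identity: for any Eulerian poset $Q$ of rank $r \geq 2$, the defining recursion $\sum_{y \in Q}\mu_Q(\hat 0, y) = 0$ together with $\mu_Q(\hat 0, y) = (-1)^{\mathrm{rk}(y)}$ yields
\[
\sum_{j=1}^{r-1}(-1)^{j-1}\bigl|\{y \in Q : \mathrm{rk}(y) = j\}\bigr| = 1 - (-1)^{r-1}.
\]
Taking $Q = [x_i, x_k]$ of rank $r = k-i$, each inner sum in the previous display evaluates to $1 - (-1)^{k-i-1}$, and then summing over the chains $c$ gives $f_S(P)\bigl(1-(-1)^{k-i-1}\bigr)$, as claimed.

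The only point at which the hypothesis $S \neq \emptyset$ is used is to guarantee that $[x_i, x_k]$ is Eulerian. Since $\{i, k\} \subseteq S \cup \{0, n+1\}$ and $S$ has no element strictly between $i$ and $k$, the non-emptiness of $S$ forces either $i \in S$ (so $x_i \neq \hat{0}$) or $k \in S$ (so $x_k \neq \hat{1}$); in either case $[x_i, x_k]$ is a \emph{proper} interval of $P$, hence Eulerian by the very definition of semi-Eulerian. There is no substantive obstacle: the argument is a faithful adaptation of \cite{BaBi}, and the restriction $S \neq \emptyset$ is precisely what prevents us from being forced to apply the Eulerian identity to $[\hat 0, \hat 1] = P$ itself, where it would fail by an error governed by $\chi(P) - \chi(\mathbb{S}^{n-1})$. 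This failure is exactly the discrepancy that the correction term $(\chi(\mathbb{S}^{n-1}) - \chi(P))\,a\cdots ab$ in $\chi_P'(a,b)$ is designed to absorb, foreshadowing the construction of the $cd$-index in the next section.
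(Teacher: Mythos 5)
Your proof is correct and follows exactly the route the paper intends: the paper gives no separate argument but states the proposition as a direct consequence of the Bayer--Billera proof of \Cref{thm:gen DS}, which, as you observe, only invokes the Euler relation on the intervals $[x_i,x_k]$, and the hypothesis $S\neq\emptyset$ guarantees these are proper intervals of $P$, hence Eulerian by the definition of semi-Eulerian. Your write-up simply makes this adaptation of \cite{BaBi} explicit, so there is nothing to correct.
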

 Equivalently, the only generalized Dehn-Sommerville relation that is not satisfied by the flag $f$-vector of a semi-Eulerian poset $P$ of rank $n+1$ is the unique one corresponding to $S=\emptyset$ (and consequently, $i=0$, $k=n+1$). Namely,
 \begin{equation}\label{eq:DS not valid}
 \sum_{j=1}^n(-1)^{j-1}f_{\{j\}}(P)=f_{\emptyset}(P)(1-(-1)^n).
 \end{equation}
 We note that the left-hand side of \eqref{eq:DS not valid} equals the Euler characteristic of $P$ and that the right-hand side simplifies to $1-(-1)^n$ as $f_{\emptyset}(P)=1$. In particular, since $1-(-1)^n$ equals the Euler characteristic $\chi(\mathbb{S}^{n-1})$ of an $(n-1)$-dimensional sphere, one sees that \eqref{eq:DS not valid} is the usual Euler relation for spheres. 

 We can now define a slightly modified flag $f$-vector for graded posets.
 \begin{definition}\label{def: f'}
     Let $P$ be a graded poset of rank $n+1$ with a unique minimal and maximal element $\hat{0}$ and $\hat{1}$. For $S\subseteq [n]$, let
     \[
     f'_S(P)=\begin{cases}
     f_{\{n\}}(P)+(\chi(\mathbb{S}^{n-1})-\chi(P)),\quad\mbox{if } S=\{n\}\\
     f_S(P), \quad\mbox{otherwise.}
     \end{cases}
     \]
     The polynomial $\chi'_P(a,b)=\sum_{S\subseteq [n]}f'_S(P)u_S$ is called \emph{modified chain polynomial}.
 \end{definition}
 We note that $\chi_P(a,b)=\chi'_P(a,b)$ if $P$ is an Eulerian poset. The next result is almost immediate:
 \begin{theorem}\label{thm:cd}
     Let $P$ be a semi-Eulerian poset of rank $n+1$. The modified chain polynomial $\chi'_P(a,b)$ admits a $cd$-index, i.e., there exists a polynomial $\Phi_P(c,d)$ in non-commuting variables $c$ and $d$ such that
\[
\Phi_P(a+b,ab+ba)=\chi'_P(a-b,b).
\]
 \end{theorem}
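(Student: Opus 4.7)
By \Cref{prop:existenceCD}, the polynomial $\chi'_P(a,b)$ admits a $cd$-index if and only if its coefficient vector $(f'_S(P))_{S\subseteq[n]}$ satisfies the generalized Dehn--Sommerville relations of \Cref{thm:gen DS}. \Cref{prop:DS semiEu} tells us that the unmodified flag $f$-vector already satisfies every such relation except the one corresponding to $S=\emptyset$. Since the modification only alters the single entry $f_{\{n\}}(P)$, the first task is to identify every GDS relation whose statement features $f_{\{n\}}$. A quick case analysis based on the conditions $\{i,k\}\subseteq S\cup\{0,n+1\}$ and $S\cap(i,k)=\emptyset$ shows there are exactly two: the failing relation indexed by $(S,i,k)=(\emptyset,0,n+1)$ (where $f_{\{n\}}$ appears on the left-hand side) and the one indexed by $(S,i,k)=(\{n\},0,n)$, namely
\[
\sum_{j=1}^{n-1}(-1)^{j-1}f_{\{j,n\}}(P)=f_{\{n\}}(P)\bigl(1-(-1)^{n-1}\bigr),
\]
where $f_{\{n\}}$ appears on the right-hand side. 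All other GDS relations are automatically preserved by the modification.

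Next I would verify that both of these relations survive the passage to $(f'_S)$. Substituting $f'_{\{n\}}=f_{\{n\}}+(\chi(\mathbb{S}^{n-1})-\chi(P))$ into the $S=\emptyset$ identity turns its left-hand side into $\chi(P)+(-1)^{n-1}(\chi(\mathbb{S}^{n-1})-\chi(P))$, which collapses to the desired value $\chi(\mathbb{S}^{n-1})$ when $n$ is odd---this is precisely by design of the correction term. For the $S=\{n\}$ relation, since no $f_{\{j,n\}}$ is altered, passing from $f$ to $f'$ only adds $(\chi(\mathbb{S}^{n-1})-\chi(P))(1-(-1)^{n-1})$ to the right-hand side; this vanishes for $n$ odd because the factor $1-(-1)^{n-1}$ is zero. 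So when $n$ is odd, both verifications are immediate.

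The main obstacle is the case $n$ even, where both checks reduce to the statement $\chi(P)=\chi(\mathbb{S}^{n-1})=0$. I would establish this as a standalone observation via the two standard Möbius identities $\sum_{y\in P}\mu_P(\hat{0},y)=0$ and $\sum_{y\in P}\mu_P(y,\hat{1})=0$, valid in any bounded graded poset with $\hat{0}\neq\hat{1}$. Plugging in the semi-Eulerian values $\mu_P(\hat{0},y)=(-1)^{\mathrm{rk}(y)}$ and $\mu_P(y,\hat{1})=(-1)^{n+1-\mathrm{rk}(y)}$ for $y\neq \hat{0},\hat{1}$ produces the two expressions $\mu_P(\hat{0},\hat{1})=\chi(P)-1$ and $\mu_P(\hat{0},\hat{1})=-1-(-1)^n\chi(P)$; subtracting them gives $(1+(-1)^n)\chi(P)=0$, which forces $\chi(P)=0$ whenever $n$ is even. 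With this in hand both verifications go through, and \Cref{prop:existenceCD} delivers the desired $cd$-index $\Phi_P(c,d)$.
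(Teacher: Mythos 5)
Your proposal is correct and follows essentially the same route as the paper: identify the only two generalized Dehn--Sommerville relations that involve $f_{\{n\}}$ (the ones indexed by $S=\emptyset$ and $S=\{n\}$), observe that the correction term is designed exactly to restore the $S=\emptyset$ relation, and note that the $S=\{n\}$ relation is insensitive to the modification when $n$ is odd, so \Cref{prop:existenceCD} applies. The only difference is in the even-$n$ case, where the paper simply remarks that $P$ is then Eulerian (so $\chi'_P=\chi_P$ and the classical $cd$-index exists), while you make this self-contained by deriving $\chi(P)=\chi(\mathbb{S}^{n-1})=0$ from the two M\"obius-function identities and then running the same verification uniformly; both treatments are valid.
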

 We will refer to the polynomial $\Phi_P(c,d)$ as the \emph{$cd$-index} of $P$. This is justified by the fact that it coincides with the usual $cd$-index in case of Eulerian posets.

 \begin{proof}
 If $n$ is even, $P$ is Eulerian and the statement follows just from the existence of the usual $cd$-index in this case. 
 
 Now let $n$ be odd. By \Cref{prop:existenceCD} the existence of the $cd$-index is equivalent to the polynomial $\chi'_P(a,b)$ satisfying the generalized Dehn-Sommerville relations. All such not involving $f'_{\{n\}}$ are valid for the usual flag $f$-vector by \Cref{prop:DS semiEu} and hence remain valid for the modified flag $f$-vector $(f'_{S}(P))_{S\subseteq [n]}$, by definition. There are only two generalized Dehn-Sommerville relations involving $f'_{\{n\}}(P)$. We first consider the  one corresponding to Euler's formula for spheres (see \eqref{eq:DS not valid}). We have
 \begin{align*}
 \sum_{j=1}^{n}(-1)^{j-1}f'_{\{j\}}(P)=&\sum_{j=1}^{n}(-1)^{j-1}f_{\{j\}}(P)+(-1)^{n-1}(\chi(\mathbb{S}^{n-1})-\chi(P))\\
 =&\chi(P)+\chi(S^{n-1})-\chi(P))=\chi(\mathbb{S}^{n-1})=1-(-1)^n,
 \end{align*}
 as desired. 
 
 The only other generalized Dehn-Sommerville relation that is different for $f$ and $f'$ is the unique one with $S=\{n\}$ (and consequently, $i=0$ and $k=n$). For $f'$ this relation states the following:
 \[
 \sum_{j=1}^{n-1}(-1)^{j-1}f'_{\{n,j\}}=f'_{\{n\}}(1-(-1)^{n-1})=0,
 \]
 where the last equality follows from the fact that $n$ is odd. Therefore, the above relation does not depend on $f'_{\{n\}}$ and is equivalent to the corresponding relation for $(f_S(P))_{S\subseteq [n]}$. Since the latter is valid by \Cref{prop:DS semiEu}, the claim follows.
 \end{proof}
 We want to remark that \Cref{thm:cd} is basically a restatement of Theorem 3.12 in \cite{Sw:SpheresToManifolds}. However, we decided to include a proof in order to make this article self-contained.

Together with \cite[Theorem 3.12]{Sw:SpheresToManifolds} the previous theorem implies that the following geometric relation between the affine spans of flag $f$-vectors of semi-Eulerian posets with different Euler characteristic.

\begin{lemma}
    Let $k,\ell$ be integers. Let $A_{SE}(k,n)$ and $A_{SE}(\ell,n)$ denote the affine span of flag $f$-vectors of semi-Eulerian posets of rank $n+1$ and Euler characteristic $k$ and $\ell$, respectively. Then $A_{SE}(k,n)$ and $A_{SE}(\ell,n)$ are parallel translates of each other. More precisely,
    \[
    A_{SE}(k,n)=A_{SE}(\ell,n)+(k-\ell)\mathbf{e}_{\{n\}},
    \]
    where $\mathbf{e}_{\{n\}}$ denotes the unit vector that has a $1$ at the position corresponding to $\{n\}$.
\end{lemma}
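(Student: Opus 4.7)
The plan is to realize both affine spans $A_{SE}(k,n)$ and $A_{SE}(\ell,n)$, after suitable translations in the $\mathbf{e}_{\{n\}}$ direction, as a common affine subspace—namely the affine hull $A_E(n)$ of flag $f$-vectors of rank $n+1$ Eulerian posets. Once this is done, the claimed parallel-translate identity is a one-line affine-algebraic rearrangement.

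For any integer $j$, set $c_j := \chi(\mathbb{S}^{n-1}) - j$ and let $T_j$ denote the translation $v \mapsto v + c_j\mathbf{e}_{\{n\}}$. First I would observe that for any semi-Eulerian poset $P$ of rank $n+1$ with $\chi(P) = j$, the vector $T_j(f_\bullet(P))$ is exactly the modified flag $f$-vector $(f'_S(P))_{S\subseteq [n]}$ of \Cref{def: f'}. Combining \Cref{thm:cd} with \Cref{prop:existenceCD}, this vector satisfies the generalized Dehn--Sommerville relations. Since, by \cite{BaBi}, these relations cut out precisely $A_E(n)$, taking affine hulls yields $T_j(A_{SE}(j,n)) \subseteq A_E(n)$ for every $j$.

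The reverse inclusion is where the real content lies: one must know that sufficiently many rank-$(n+1)$ semi-Eulerian posets of Euler characteristic $j$ exist to make the affine hull of their (translated) flag $f$-vectors fill all of $A_E(n)$, which has dimension $F_n-1$. This is exactly the assertion of \cite[Theorem 3.12]{Sw:SpheresToManifolds}, which we invoke to conclude $T_j(A_{SE}(j,n)) = A_E(n)$.

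Specializing to $j = k$ and $j = \ell$ and equating gives
\[
A_{SE}(k,n) + c_k\mathbf{e}_{\{n\}} = A_E(n) = A_{SE}(\ell,n) + c_\ell\mathbf{e}_{\{n\}},
\]
and since $c_\ell - c_k = k - \ell$ this rearranges to $A_{SE}(k,n) = A_{SE}(\ell,n) + (k-\ell)\mathbf{e}_{\{n\}}$, as desired. The main obstacle in a from-scratch proof would be the second step, namely producing enough semi-Eulerian posets of each prescribed Euler characteristic (for instance, via connected-sum or handle-attachment constructions on triangulated manifolds to adjust $\chi$ while tracking flag-vector changes); invoking Swartz's theorem bypasses this combinatorial construction cleanly.
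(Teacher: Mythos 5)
Your proposal is correct and follows essentially the same route as the paper, which states the lemma without a separate proof precisely as a consequence of \Cref{thm:cd} together with Swartz's Theorem 3.12: the translation by $(\chi(\mathbb{S}^{n-1})-\chi(P))\mathbf{e}_{\{n\}}$ places the flag $f$-vector in the Bayer--Billera (generalized Dehn--Sommerville) affine space, and the spanning (reverse-inclusion) content is delegated to Swartz. You merely make the affine-hull and translation bookkeeping explicit, which is consistent with the paper's intended argument.
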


In view of the previous lemma, it is clear that we could have taken other corrections to enforce the generalized Dehn-Sommerville equations. However, it will turn out that our choice is not only extremely simple on the level of flag $f$-vectors but also comes with other nice properties. One of the main ingredients to show non-negativity of the $cd$-index for odd-dimensional manifolds in \cite{Novik:cd} was a certain recursion by Stanley \cite[Theorem 3.1]{Stanley94}, expressing the $cd$-index in terms of the $h$-vector and some specific polynomials $\check\Phi_i^n(c,d)$. We now recall the definition of these polynomials and then show that the mentioned recursion remains valid in our more general setting.

In the following, we restrict our attention to \emph{simplicial} posets. Recall that a poset $P$ is \emph{simplicial}, if every interval $[\hat{0},x]$ with $x\in P\setminus\{ \hat{1}\}$ is isomorphic to  a Boolean algebra. If $P$ is the augmented face poset of a regular CW complex, simpliciality just means that we are in the special case of simplicial complexes. Given a simplicial poset $P$ of rank $n+1$, we let $f_{i-1}(P)$ count the number of elements $x\in P$ such that $[\hat{0},x]$ is isomorphic to the Boolean algebra of rank $i$. The \emph{$f$-vector} $f(P)=(f_{-1}(P),f_0(P),\ldots,f_{n-1}(P))$ of $P$ is called the \emph{$f$-vector} of $P$. Note that when $P$ is simplicial its flag $f$-vector is completely determined by its $f$-vector. Indeed, we have that
\begin{align}
    f_S(P)=\binom{s_k}{s_1,s_2-s_1,\dots,s_k-s_{k-1}}f_{s_k-1}(P),
\end{align}
for $S = \{s_1<\cdots<s_k\}$, and $f_{\emptyset}(P)=f_{-1}(P)$. In general, the flag $f$-vector contains more information as $f_{i}(P)=f_{\{i+1\}}(P)$ for $0\leq i\leq n-1$. Moreover, if $P$ is the augmented face poset of a simplicial complex $\Delta$, then $f(P)=f(\Delta)$. The \emph{$h$-vector} $h(P)=(h_0(P),h_1(P),\ldots,h_n(P))$ of a simplicial poset $P$ of rank $n+1$ is defined via  
\[
\sum_{i=0}^{n} f_{i-1}(P)(x-1)^{n-i}=\sum_{i=0}^n h_i(P)x^{n-i}.
\]
If $P$ is the augmented face poset of a simplicial complex $\Delta$, then $h(P)=h(\Delta)$. However, one should not confuse the $h$-vector of $P$ with the $h$-vector of its order complex.

\begin{remark}\label{rem: important}
We have already mentioned that simplicial Eulerian posets admit a $cd$-index. Equivalently, their flag $f$-vectors satisfy the generalized Dehn-Sommerville relations, see \Cref{thm:gen DS}. Moreover, the only linear relations satisfied by the $h$-vector of any simplicial Eulerian poset of rank $n+1$ are the classical Dehn-Sommerville relations $h_i=h_{n-i}$ for $0\leq i\leq n$. Since the $h$-vector of a is mapped by a linear map to the flag $f$-vector, it follows that under this map any vector satisfying the classical Dehn-Sommerville relations (i.e., lying in the span of $h$-vectors of simplicial Eulerian posets) is mapped to a vector satisfying the generalized Dehn-Sommerville relations. As such the latter admits a $cd$-index. The proof of \Cref{thm:CDvsH} will significantly rely on this fact.
\end{remark}

For a CW complex $\Gamma$, whose geometric realization is an $n$-dimensional ball, the \emph{semisuspension} of $\Gamma$ is the CW complex obtained from $\Gamma$ by attaching a new $n$-cell $\tau$ along the boundary of $\Gamma$, i.e., $\partial\tau=\partial\Gamma$. 

Let $\Lambda^n$ be the boundary complex of an $n$-simplex with facets $\sigma_0,\sigma_1,\ldots,\sigma_n$. For $0\leq i\leq n-1$, let $\Lambda_i^n$ be the $(n-1)$-dimensional CW complex that is the semisuspension of the simplicial complex $\Gamma^n_i$ generated by $\sigma_0,\sigma_1,\ldots,\sigma_i$. In particular, $\Lambda_{n-1}^{n}=\Lambda^n$. One now sets $\check\Phi_i^n=\Phi_{\Lambda_i^n}-\Phi_{\Lambda_{i-1}^n}$ with $\check\Phi_0^n=\Phi_{\Lambda_0^n}$ and $\check\Phi_n^n=0$. With these definition we can now state the desired recursion (cf. \cite[Theorem 3.1]{Stanley94} for the corresponding statement for Eulerian simplicial posets).
\begin{theorem}\label{thm:CDvsH}
    Let $P$ be a semi-Eulerian simplicial poset of rank $n+1$. Then 
    \begin{equation}\label{eqn:PSIvsH}
        \Phi_P(c,d) = \sum_{i=0}^{n-1} h_i(P)\check\Phi^n_i(c,d).
    \end{equation}
\end{theorem}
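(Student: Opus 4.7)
The plan is as follows. Both sides of \eqref{eqn:PSIvsH} are affine functions of the $h$-vector $\mathbf{h}=(h_0,\ldots,h_n)$ of $P$. For the right-hand side this is clear. For the left-hand side, the flag $f$-vector of a simplicial poset is determined by its $f$-vector via $f_S(P)=\binom{s_k}{s_1,s_2-s_1,\ldots,s_k-s_{k-1}}f_{s_k-1}(P)$, and the $f$-vector is related to $\mathbf{h}$ by an invertible linear map; hence $\chi_P(a,b)$ is linear in $\mathbf{h}$. The modification $\chi'_P-\chi_P=(\chi(\mathbb{S}^{n-1})-\chi(P))u_{\{n\}}$ is affine in $\mathbf{h}$ (since $\chi(P)=\sum_j(-1)^{j-1}f_{j-1}(P)$ is linear in $\mathbf{h}$), and the passage from $\chi'_P$ to $\Phi_P$ is linear wherever defined, which by \Cref{thm:cd} is on the entire affine space of $h$-vectors of semi-Eulerian simplicial posets of rank $n+1$.

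If $n$ is even, Klee's relation $h_{n-i}-h_i=(-1)^i\binom{n}{i}(\chi(P)-\chi(\mathbb{S}^{n-1}))$ evaluated at $i=n/2$ forces $\chi(P)=\chi(\mathbb{S}^{n-1})$, so $P$ is Eulerian and \eqref{eqn:PSIvsH} reduces at once to Stanley's theorem \cite[Theorem 3.1]{Stanley94}. The substantive case is $n$ odd. There, the Eulerian $h$-vectors form the affine subspace cut out by $h_i=h_{n-i}$, on which Stanley's theorem again yields \eqref{eqn:PSIvsH}, while the semi-Eulerian $h$-vectors form an affine subspace exactly one dimension larger, parametrized by $\delta:=\chi(P)-\chi(\mathbb{S}^{n-1})$. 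By linearity it therefore suffices to verify \eqref{eqn:PSIvsH} on a single $h$-vector with $\delta\neq 0$ in order to conclude on the whole semi-Eulerian subspace.

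This final verification is the main obstacle, and is the source of the ``lengthy computations'' signaled in the introduction. A perturbation by $\delta$ shifts $h_{n-i}$ by $(-1)^i\binom{n}{i}\delta$ for $0\leq i\leq\lfloor n/2\rfloor$, which in turn alters every $f_S(P)$ with $n\in S$; the compensating term $(\chi(\mathbb{S}^{n-1})-\chi(P))u_{\{n\}}=-\delta\,u_{\{n\}}$ is then precisely what is needed so that the resulting modified chain polynomial continues to admit a $cd$-index. One must check that, after the substitution $a\mapsto a-b$ and re-expression in $c,d$, the induced change in $\Phi_P$ equals $\sum_{j\geq\lceil n/2\rceil}(-1)^{n-j}\binom{n}{n-j}\delta\,\check\Phi^n_j$, matching the change in the right-hand side. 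Since the translate of the span of flag $f$-vectors of simplicial semi-Eulerian posets is not the span of flag $f$-vectors of simplicial Eulerian posets, this identity cannot be extracted formally from Stanley's theorem; equivalently, it suffices to exhibit for each odd $n$ one concrete non-Eulerian semi-Eulerian simplicial poset of rank $n+1$ (e.g.\ a minimal triangulation of a closed odd-dimensional non-spherical manifold) and verify \eqref{eqn:PSIvsH} on it by direct computation of both sides, after which the linearity established in the first paragraph closes the argument.
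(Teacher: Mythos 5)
Your reduction step is legitimate and is essentially the same linearity principle the paper exploits: both sides of \eqref{eqn:PSIvsH} are affine functions of $h(P)$, they agree on the affine span of $h$-vectors of Eulerian simplicial posets by Stanley's theorem (cf.\ \Cref{rem: important}), the even-$n$ case follows from Klee's relation at $i=n/2$, and for odd $n$ Klee's relations place all semi-Eulerian simplicial $h$-vectors in an affine space exactly one dimension larger, so agreement at a single point with $\delta=\chi(P)-\chi(\mathbb{S}^{n-1})\neq 0$ would close the argument. The genuine gap is that you never supply that point. You state that one ``must check'' that the left-hand side changes under the $\delta$-perturbation by $\sum_{j\geq\lceil n/2\rceil}(-1)^{n-j}\binom{n}{n-j}\delta\,\check\Phi^n_j$, and then declare that, equivalently, it suffices to verify the theorem on one concrete non-Eulerian example of rank $n+1$ for each odd $n$ ``by direct computation of both sides.'' That is precisely the substantive content of the theorem, and it is not a finite check: it is an infinite family of verifications, one for every odd $n$, each requiring explicit knowledge of the $cd$-index of the chosen poset and of all the polynomials $\check\Phi^n_i$, for which no closed form is available. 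The paper does exactly this work in \Cref{lem: identity hard}: using the shellings of the semisuspensions $\Lambda^n_i$ it computes the flag $f$-vectors associated to the $\check\Phi^n_i$ and deduces that the discrepancy vector $\mathbf{d}=(f'_S(P))_S-(\mathbf{g}_S)_S$ has $cd$-index $(\chi(P)-\chi(\mathbb{S}^{n-1}))\sum_{i=\lfloor n/2\rfloor+1}^{n-1}(-1)^i\binom{n}{i}\check\Phi^n_i$, which is, up to sign conventions, the identity you assert without proof. Until you either prove that identity or exhibit a uniform family of examples together with an actual computation of both sides valid for all odd $n$, the argument is incomplete; the introduction's warning about ``lengthy computations'' refers to exactly this step.

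A secondary slip in the proposed one-point check: for odd $n$ the relevant complexes have dimension $n-1$, which is even, so the witnesses you need are even-dimensional closed manifolds with $\chi\neq 2$ (e.g.\ the $7$-vertex torus for $n=3$, as in the paper's example), or non-Eulerian semi-Eulerian simplicial posets such as a disjoint union of two simplex boundaries with the empty faces identified; a closed odd-dimensional manifold has $\chi=0$ and occurs for even $n$, where the poset is already Eulerian and contributes nothing new.
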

In order to prove \Cref{thm:CDvsH} we apply to $\check\Phi^n_i=\check\Phi^n_i(c,d)$ the linear map which sends the $cd$-index of an Eulerian poset to its flag $f$-vector. We call the vector obtained in this way the \emph{flag $f$-vector associated to $\Phi^n_i(c,d)$} and use $f_S(\check\Phi_i^n)$ to denote its entries for $\emptyset\subsetneq S\subsetneq [n]$.

\begin{lemma}\label{lem: identity hard}
   Let $n$ be odd and let $(f_S(\check\Phi_i^n))_{S\subseteq[n]}$ be the flag $f$-vector associated with the $cd$-polynomial $\check\Phi_i^n(c,d)$. Then for $S=\{s_1<\cdots<s_k\}\subseteq [n]$ the following identity holds
    \begin{equation}\label{eq: identity hard}
        \sum_{i=\lfloor\frac{n}{2}\rfloor+1}^{n-1}(-1)^i\binom{n}{i}f_S(\check\Phi_i^n) = \binom{n}{s_k}\binom{s_k}{s_1,s_2-s_1,\dots,s_k-s_{k-1}}\sum_{\lfloor\frac{n}{2}\rfloor+1\leq j \leq s_k}(-1)^{j}\binom{s_k}{j}+\delta_{S,\{n\}},
    \end{equation}
    where $\delta_{S,\{n\}}=1$ if $S=\{n\}$ and $\delta_{S,\{n\}}=0$, otherwise.
\end{lemma}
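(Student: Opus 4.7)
The plan is to compute $f_S(\check\Phi_i^n)$ in closed form from the CW-structure of $\Lambda_i^n = \Gamma_i^n \cup \{\tau\}$, and then evaluate the LHS via classical binomial identities. A direct inclusion--exclusion gives $f_j(\Gamma_i^n) = \binom{n+1}{j+1} - \binom{n-i}{j-i}$, because a $(j+1)$-subset of the vertex set $\{0,1,\dots,n\}$ fails to lie in $\Gamma_i^n$ precisely when it contains $\{0,\dots,i\}$; also $f_{n-1}(\Lambda_i^n) = i+2$. The argument then splits according to whether $n \in S$.

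If $n \notin S$, every chain counted by $f_S$ stays in the simplicial part $\Gamma_i^n$, so the multinomial expansion combined with Pascal's rule on the $i \to i-1$ difference yields
\[
f_S(\check\Phi_i^n) = \binom{s_k}{s_1,s_2-s_1,\ldots,s_k-s_{k-1}}\binom{n-i}{s_k-i}.
\]
Substituting this into the LHS and applying $\binom{n}{i}\binom{n-i}{s_k-i} = \binom{n}{s_k}\binom{s_k}{i}$ together with $\binom{s_k}{i}=0$ for $i>s_k$ produces the RHS directly. If instead $n \in S$, set $m = s_{k-1}$ (or $m=0$ if $k=1$). I would split chains by their rank-$n$ top element: each facet $\sigma_j$ ($j \leq i$) contributes $\binom{n}{s_1,\ldots,n-m}$ Boolean chains, while $\tau$ contributes $\binom{m}{s_1,\ldots,m-s_{k-2}}\,f_{m-1}(\partial\Gamma_i^n)$, since the faces strictly below $\tau$ in $\Lambda_i^n$ are precisely those of $\partial\Gamma_i^n = \Gamma_i^n \cap (\sigma_{i+1}\cup\cdots\cup\sigma_n)$. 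A double inclusion--exclusion gives $f_{m-1}(\partial\Gamma_i^n) = \binom{n+1}{m} - \binom{n-i}{m-1-i} - \binom{i+1}{m+i-n}$, and Pascal's rule on the $i \to i-1$ difference produces
\[
f_S(\check\Phi_i^n) = \binom{m}{s_1,\ldots,m-s_{k-2}}\left[\binom{n}{m} + \binom{n-i}{m-i} - \binom{i}{m+i-n}\right].
\]

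With these formulas the LHS splits as $\binom{m}{s_1,\ldots,m-s_{k-2}}(A + B - C)$, where $A,B,C$ are the three $i$-sums. The term $A = \binom{n}{m}\sum_{i=(n+1)/2}^{n-1}(-1)^i\binom{n}{i}$ agrees with the main RHS term up to the missing $i=n$ summand, which equals $\binom{n}{m}$ because $n$ is odd. To cancel this I would show $B - C = -\binom{n}{m}$: applying $\binom{n}{i}\binom{n-i}{m-i} = \binom{n}{m}\binom{m}{i}$ to $B$ and the substitution $\ell = n-i$ (with $(-1)^{n-\ell} = -(-1)^\ell$ since $n$ is odd) to $C$, the two resulting sums combine into a single sum over $j = 1,\ldots,n-1$, giving
\[
B - C = \binom{n}{m}\sum_{j=1}^{n-1}(-1)^j\binom{m}{j} = -\binom{n}{m},
\]
since $\binom{m}{j}=0$ for $j>m$ and $\sum_{j=0}^m(-1)^j\binom{m}{j}=0$ when $m \geq 1$. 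The edge case $S = \{n\}$ corresponds to $m = 0$: here $B$ and $C$ both vanish and the Kronecker $\delta_{S,\{n\}}=1$ on the RHS absorbs exactly the leftover $-(-1)^n=1$ from extending $A$'s range to $i=n$. The main obstacle is the bookkeeping when $n \in S$: one has to identify $[\hat{0},\tau]$ with the face poset of $\partial\Gamma_i^n$ (plus $\hat{0},\tau$) and then verify that the edge case $i = n-1$, where $\Gamma_{n-1}^n = \partial\Delta^n$ is a sphere and the semisuspension degenerates, is still consistent with the formulas above --- which it is, as one checks via Pascal's rule that $f_{m-1}(\partial\Gamma_{n-1}^n) = \binom{n}{m}$.
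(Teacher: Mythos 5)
Your proposal is correct and follows essentially the same route as the paper: you write $f_S(\check\Phi_i^n)=f_S(P(\Lambda_i^n))-f_S(P(\Lambda_{i-1}^n))$, use that these posets are Boolean below every cell except $\tau$, split into the same three cases ($s_k<n$; $s_k=n$ with $S\neq\{n\}$; $S=\{n\}$), and finish with the same identities $\binom{n}{i}\binom{n-i}{s_k-i}=\binom{n}{s_k}\binom{s_k}{i}$ and $\sum_{j=1}^{n-1}(-1)^j\binom{m}{j}=-1$, the only difference being that you justify the increments $\binom{n-i}{s_k-i}$ and $\binom{n-i}{n-m}-\binom{i}{n-m}$ by explicit inclusion--exclusion counts plus Pascal's rule instead of the paper's shelling/boundary-tracing argument. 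One small correction: $\Gamma^n_{n-1}$ is generated by $\sigma_0,\dots,\sigma_{n-1}$ and is a ball whose semisuspension is isomorphic to $\Lambda^n$ (the attached cell playing the role of $\sigma_n$), not the sphere $\partial\Delta^n$, so no degeneration occurs at $i=n-1$ --- and, as your own consistency check confirms, this does not affect any of your formulas.
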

\begin{proof}
    Since the map which takes a $cd$-polynomial to the associated flag $f$-vector is linear we have that 
    \[
        f_S(\check\Phi_i^n)=f_S(P(\Lambda^n_i))-f_S(P(\Lambda^n_{i-1})).
    \]
    The key observation is that the augmented face posets $P(\Lambda^n_i)$ are not far from being simplicial. More precisely, all proper lower order ideals are isomorphic to Boolean posets, except the one corresponding to the special $(n-1)$-dimensional cell added in the semisuspension. In particular, the number of chains in such interval whose elements have rank in a fixed set $S=\{s_1<\cdots<s_k\}\subseteq [n]$ can be expressed in terms of the $f$-vector of $\partial\Lambda^n_i$ as follows:  
    \begin{equation}\label{eq: identity 1}
        f_S(P(\Lambda^n_i))=\begin{cases}
            1 & \mbox{ if } S=\emptyset\\
            \binom{s_k}{s_1,s_2-s_1,\dots,s_k-s_{k-1}}f_{s_k-1}(\Lambda^n_i) & \mbox{ if }s_k<n\\
            \binom{n}{s_1,s_2-s_1,\dots,n-s_{k-1}}(f_{n-1}(\Lambda^n_i)-1)+\binom{s_{k-1}}{s_1,s_2-s_1,\dots,s_{k-1}-s_{k-2}} f_{s_{k-1}-1}(\partial \Gamma^{n}_i) & \mbox{ if } s_k=n, S\neq\{n\}\\
            f_{n-1}(\Lambda^n_i) & \mbox{ if } S=\{n\}.
        \end{cases}
    \end{equation}
    Moreover, by counting the number of new faces introduced with a shelling we obtain
    \begin{equation}\label{eq: identity 2}
        f_{s_k-1}(\Lambda^n_i)-f_{s_k-1}(\Lambda^n_{i-1})=\begin{cases}
            0 & s_k<i \\
            \binom{n-i}{s_k-i} & s_k\geq i.
        \end{cases}
    \end{equation}
    Hence $f_S(\check\Phi_i^n)=0$ if $s_k<i$. We now distinguish between three cases.
    
    {\sf Case 1:} Assume that $s_k<n$. Combining \eqref{eq: identity 1} and \eqref{eq: identity 2} we obtain
    \begin{equation*}
        \sum_{i=\lfloor\frac{n}{2}\rfloor+1}^{n-1}(-1)^i\binom{n}{i}f_S(\check\Phi_i^n) = \sum_{i=\lfloor\frac{n}{2}\rfloor+1}^{s_k} (-1)^i\binom{n}{i} \binom{s_k}{s_1,s_2-s_1,\dots,s_k-s_{k-1}}\binom{n-i}{s_k-i},
    \end{equation*}
    which is easily seen to coincide with the right-hand side of \eqref{eq: identity hard}.
    
    {\sf Case 2:} Assume that $s_k=n$ and $S\neq\{n\}$. By tracing how the boundaries of $\Gamma^n_i$ and $\Gamma^n_{i-1}$ are related we obtain
    \[
        f_{s_{k-1}-1}(\partial\Gamma^n_i)-f_{s_{k-1}-1}(\partial\Gamma^n_{i-1})=\binom{n-i}{n-s_{k-1}}-\binom{i}{n-s_{k-1}}.
    \]
    Substituting this in the left-hand side of \eqref{eq: identity hard} and using \eqref{eq: identity 1} we conclude
    \begin{align*}
    \sum_{i=\lfloor\frac{n}{2}\rfloor+1}^{n-1}(-1)^i\binom{n}{i}f_S(\check\Phi_i^n)
    &=\sum_{i=\lfloor\frac{n}{2}\rfloor+1}^{n-1}(-1)^i\binom{n}{i}\binom{n}{s_1,s_2-s_1,\dots,n-s_{k-1}}\\
    &+ 
    \binom{s_{k-1}}{s_1,s_2-s_1,\dots,s_{k-1}-s_{k-2}}\sum_{i=\lfloor\frac{n}{2}\rfloor+1}^{n-1}(-1)^{i}\binom{n}{i}\left(\binom{n-i}{n-s_{k-1}}-\binom{i}{n-s_{k-1}}\right)\\
    &=\sum_{i=\lfloor\frac{n}{2}\rfloor+1}^{n-1}(-1)^i\binom{n}{i}\binom{n}{s_1,s_2-s_1,\dots,n-s_{k-1}}\\
    &+
    \binom{n}{s_{k-1}}\binom{s_{k-1}}{s_1,s_2-s_1,\dots,s_{k-1}-s_{k-2}}\sum_{i=\lfloor\frac{n}{2}\rfloor+1}^{n-1}(-1)^{i}\left(\binom{s_{k-1}}{i}-\binom{s_{k-1}}{n-i}\right)\\
    &=\sum_{i=\lfloor\frac{n}{2}\rfloor+1}^{n-1}(-1)^i\binom{n}{i}\binom{n}{s_1,s_2-s_1,\dots,n-s_{k-1}}-\binom{n}{s_1,s_2-s_1,\dots,n-s_{k-1}}\\
    &=\sum_{i=\lfloor\frac{n}{2}\rfloor+1}^{n}(-1)^i\binom{n}{i}\binom{n}{s_1,s_2-s_1,\dots,n-s_{k-1}},
    \end{align*}
    where for the third equality we have used that $\sum_{i=\lfloor\frac{n}{2}\rfloor+1}^{n-1}(-1)^{i}\left(\binom{s_{k-1}}{i}-\binom{s_{k-1}}{n-i}\right)=\sum_{j=1}^{n-1}(-1)^j\binom{s_{k-1}}{j}=-1+\sum_{j=0}^{n-1}(-1)^j\binom{s_{k-1}}{j}=-1$. 
    As the last expression above equals the right-hand side of \eqref{eq: identity hard}, the claim follows in this case.
    
    {\sf Case 3:} Assume that $S=\{n\}$. In this case the left-hand side of \eqref{eq: identity hard} equals
    \begin{align*}
    \sum_{i=\lfloor\frac{n}{2}\rfloor+1}^{n-1}(-1)^i\binom{n}{i}f_S(\check\Phi_i^n)&=\sum_{i=\lfloor\frac{n}{2}\rfloor+1}^{n-1}(-1)^i\binom{n}{i}=\sum_{i=\lfloor\frac{n}{2}\rfloor+1}^{n}(-1)^i\binom{n}{i}+1,
    \end{align*}
    which also coincides with the right-hand side of \eqref{eq: identity hard}. This concludes the proof.
\end{proof}
Using \Cref{lem: identity hard} we are now ready to prove \Cref{thm:CDvsH}.

\begin{proof}[Proof of \Cref{thm:CDvsH}]
    If $n$ is even, then $P$ is Eulerian, and the statement is \cite[Theorem 3.1]{Stanley94}. Assume now that $n$ is odd, and consider the vector $(f'_S(P))_{S\subseteq[n]}$ as defined in \Cref{def: f'}. By definition, we have that $\Phi_P(a+b,ab+ba)=\Phi_P(c,d)=\chi'_P(a-b,b)$. However, $(f'_S(P))_{S\subseteq[n]}$ does not lie in the affine span of flag $f$-vectors of Eulerian \emph{simplicial} posets in general. To circumvent this issue we let $\mathbf{w}\coloneqq (\chi(P)-\chi(\mathbb{S}^{n-1}))\sum_{i=\lfloor\frac{n}{2}\rfloor+1}^{n} (-1)^{i}\binom{n}{i}\mathbf{e}_i$, where $\mathbf{e}_i$ denotes the $i$-th standard unit vector in $\mathbb{R}^n$. We consider the following vector in $\mathbf{v}\in \mathbb{R}^{n}$
     \begin{align*}
        \mathbf{v}&\coloneqq h(P)-\mathbf{w}.
    \end{align*}
    By a result of Klee \cite{Klee_1964} (see also \cite[Theorem 3.1]{Sw:SpheresToManifolds}) $\mathbf{v}$ satisfies the (classical) Dehn-Sommerville relations, and it hence lies in the span of $h$-vectors of Eulerian simplicial posets (cf., \Cref{rem: important}). 
    Applying to $\mathbf{v}$ the linear map which sends the $h$-vector of a simplicial poset to its flag $f$-vector, we obtain a vector $\mathbf{g}\coloneqq(\mathbf{g}_S)_{S\subseteq [n]}$ given by
    \[
       \mathbf{g}_S = \binom{s_k}{s_1,s_2-s_1,\dots,s_k-s_{k-1}}\sum_{j\leq s_k}\binom{n-j}{n-s_k}\mathbf{v}_j  
    \]
    for $S=\{s_1<\cdots <s_k\}$. By the previous reasoning (see \Cref{rem: important}), $\mathbf{g}$ admits a $cd$-index $\Phi_{\mathbf{g}}(c,d)$ which, using linearity, can be computed via the formula 
    \begin{align*}
        \Phi_{\mathbf{g}}(c,d)&=\sum_{i=0}^{n-1} (h_i(P)-\mathbf{w}_i)\check\Phi^n_i(c,d)
    \end{align*}
     by \cite[Theorem 3.1]{Stanley94}.  
  We now consider the difference $\mathbf{d}=(\mathbf{d}_S)_{S\subseteq [n]}\coloneqq(f'_S(P))_{S\subseteq [n]}-(\mathbf{g}_S)_{S\subseteq [n]}$, that is for $S=\{s_1<\cdots <s_k\}$ we have
    \[
        \mathbf{d}_S=(\chi(P)-\chi(\mathbb{S}^{n-1}))\left[\binom{n}{s_k}\binom{s_k}{s_1,s_2-s_1,\dots,s_k-s_{k-1}}\sum_{\lfloor\frac{n}{2}\rfloor+1\leq j \leq s_k}(-1)^{j}\binom{s_k}{j}+\delta_{S,\{n\}}\right].
    \]
    Applying \Cref{lem: identity hard} to the expression in the square brackets we obtain
    \[
        \mathbf{d}_S=(\chi(P)-\chi(\mathbb{S}^{n-1}))\sum_{i=\lfloor\frac{n}{2}\rfloor+1}^{n-1}(-1)^i\binom{n}{i}f_S(\check\Phi_i^n).
    \]
    Denoting by $\Phi_{\mathbf{d}}(c,d)$ the $cd$-index of $\mathbf{d}$ (which exists by linearity), the previous identity implies
    \begin{equation}\label{eq:cd for d}
        \Phi_{\mathbf{d}}(c,d)=(\chi(P)-\chi(\mathbb{S}^{n-1}))\sum_{i=\lfloor\frac{n}{2}\rfloor+1}^{n-1}(-1)^i\binom{n}{i}\check\Phi^n_i(c,d).  
    \end{equation}
    Using the definition of $\mathbf{d}$ and linearity we can compute $\Phi_{P}(c,d)$ via
    \begin{align*}
        \Phi_P(c,d)=&\Phi_{\mathbf{d}}(c,d)+\Phi_{\mathbf{g}}(c,d)\\
        =&\Phi_{\mathbf{d}}(c,d)+\sum_{i=0}^{n-1}h_i(P)\check\Phi^n_i(c,d)-(\chi(P)-\chi(\mathbb{S}^{n-1}))\sum_{i=\lfloor\frac{n}{2}\rfloor+1}^{n-1}(-1)^i\binom{n}{i}\check\Phi^n_i(c,d)\\
        =&\Phi_{\mathbf{d}}(c,d)+\sum_{i=0}^{n-1}h_i(P)\check\Phi^n_i(c,d)-\Phi_{\mathbf{d}}(c,d)\\
    =&\sum_{i=0}^{n-1}h_i(P)\check\Phi^n_i(c,d),
    \end{align*}
    where the third equality follows from \eqref{eq:cd for d}. This finishes the proof.
\end{proof}

\begin{example}
    Let $T$ be the unique triangulation of $\mathbb{S}^1\times \mathbb{S}^1$ on $7$ vertices. Hence $P=P(T)$ is a semi-Eulerian simplicial poset. We have that $h(P)=(1,4,10,-1)$, $\chi(\mathbb{S}^{2})-\chi(P)=2$ and
    \[
        \chi_P(a,b)=a^3+14a^2b+21aba+7ba^2+42ab^2+42bab+42b^2a+84b^3
    \]
    and $\chi'_P(a,b)=\chi_P(a,b)+2a^2b$. We compute
    \[
        \Psi_P(a,b)=\chi_P(a-b,b)=a^3+13a^2b+20aba+6ba^2+8ab^2+22bab+15b^2a-b^3
    \]
    and
    \begin{align*}
         \chi'_P(a-b,b)&=\Psi_P(a,b)+(2a^2b-2ab^2-2bab+2b^3)\\
         &=a^3+15a^2b+20aba+6ba^2+6ab^2+20bab+15b^2a+b^3,
    \end{align*}
    which gives
    \[
        \Phi_P(c,d)=c^3+14cd+5dc.
    \]
    Using \Cref{thm:CDvsH} we obtain the same $cd$-polynomial computing
    \begin{align*}
        h_0(P)\check\Phi^3_0(c,d)+h_1(P)\check\Phi^3_1(c,d)+h_2(P)\check\Phi^3_2(c,d) &= c^3+dc + 4(cd+dc)+10cd\\
        &=c^3+14cd+5dc.
    \end{align*}
    Observe that all coefficients of the $cd$-index are non-negative in this example. This is no coincidence as we will see in \Cref{sec: nonnegative}.
\end{example}
\begin{remark}
    \Cref{thm:CDvsH} allows to write the coefficients of the $cd$-index of a semi-Eulerian simplicial poset $P$ as linear combinations of the numbers $h_i(P)$ in the range where we can explicitly compute the polynomials $\check\Phi^n_i(c,d)$. For instance, if we write $h_i=h_i(P)$, we have:
    \begin{table}[h]
        \centering
        \begin{tabular}{l|l}
             $n$ & $\Phi_P(c,d)$ \\\hline
             $3$ & $h_0c^3+(h_1+h_2)cd+(h_0+h_1)dc$\\
             $4$ & $h_0c^4+(h_1+h_2+h_3)c^2d+(2h_0+2h_1+h_2)cdc+(h_0+h_1)dc^2+(h_1+2h_2+h_3)d^2$\\
             $5$ & $h_0c^5+(h_1+h_2+h_3+h_4)c^4d+(3h_0+3h_1+2h_2+h_3)c^2dc+(5h_0+3h_1+h_2)cdc^2+(3h_0+h_1)dc^3$\\
             &$+(2h_1+4h_2+4h_3+2h_4)cd^2+(2h_1+3h_2+3h_3+2h_4)dcd+(4h_0+4h_1+3h_2+h_3)d^2c$\\
             $6$ & $h_0c^6+(h_1+h_2+h_3+h_4+h_5)c^4d+(4h_0+4h_1+3h_2+2h_3+h_4)c^3dc+(9h_0+6h_1+3h_2+h_3)c^2dc^2$\\
             & $+(9h_0+4h_1+h_2)cdc^3+(4h_0+h_1)dc^4
             +(3h_1+6h_2+7h_3+6h_4+3h_5)c^2d^2$\\
             &$+(5h_1+8h_2+9h_3+8h_4+5h_5)cdcd+(3h_1+4h_2+4h_3+4h_4+3h_5)dc^2d$\\
             &$+(12h_0+12h_1+10h_2+6h_3+2h_4)cd^2c+(10h_0+10h_1+8h_2+5h_3+2h_4)dcdc$\\
             &$+(12h_0+8h_1+4h_2+h_3)d^2c^2+(4h_1+8h_2+10h_3+8h_4+4h_5)d^3$
        \end{tabular}
        \caption{$cd$-indices of semi-Eulerian simplician posets of rank $n+1$, with $3\leq n\leq 6$. Coefficients are linear combinations of the $h$-numbers.}
        \label{tab:my_label}
    \end{table}
    
    Interestingly, it appears from the examples we computed that the sequence $([w]\check\Phi^n_i(c,d) : 0\leq i\leq n-1)$ is unimodal for any monomial $w$. We currently do not have an explanation for this phenomenon, even though it is very tempting to conjecture that it holds for every $n$.
\end{remark}

\section{Bounding the coefficients of $\check\Phi_i^n$ via André permutations}

One of the steps to  show that the $cd$-index of a semi-Eulerian poset is always non-negative, consists in proving lower bounds for the coefficients of certain monomials $w$ in $\check\Phi^n_j(c,d)$. These bounds will be in terms of the number of occurrences of the letter $d$ in $w$. 

In the following, we use $[w]\Phi_P(c,d)$ to denote the coefficient of $w$ in $\Phi_P(c,d)$. The main result of this section is the following:

\begin{theorem}\label{thm: Andre}
    Let $n\geq 4$ and let $w$ be a $cd$-monomial with $m$ occurrences of the variable $d$. Then
    \begin{itemize}
        \item[(1)] If $\deg(w)=n-1$, then $[wc]\check\Phi_0^n\geq 2^m$.
        \item[(2)] If $n\geq 5$, $1\leq j \leq n-1$ and $\deg(w)=n-2$, then $[wd]\check\Phi_j^n\geq 2^m$.
    \end{itemize}
\end{theorem}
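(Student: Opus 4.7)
My approach follows Stanley's combinatorial interpretation of the coefficients of $\Phi_{\partial\Delta^n}$ through André permutations \cite{Stanley94}. Under the canonical shelling $\sigma_0,\ldots,\sigma_n$ of $\partial\Delta^n$, one has $\Phi_{\partial\Delta^n}=\sum_{i=0}^{n-1}\check\Phi^n_i$ (the $h=(1,\dots,1)$ case of \Cref{thm:CDvsH}), and the coefficient $[\mu]\check\Phi^n_i$ counts the André permutations of $[n+1]$ whose $cd$-word equals $\mu$ and whose ``shelling label'' in Stanley's sense equals $i$. The plan is to produce, for each monomial of interest, an explicit set of at least $2^m$ such permutations.

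For part~(1), I would first identify $\check\Phi^n_0=\Phi_{\Lambda^n_0}$ in closed form. Since $\Lambda^n_0$ is the simplicial Eulerian poset obtained by gluing two $(n-1)$-simplices along their common boundary $\partial\sigma_0$, a direct calculation shows that its $h$-vector is $(1,0,\dots,0,1)$. From this one can compute the flag $h$-vector and invert to obtain the $cd$-index. The resulting identity (verified for $n=2,3,4$ and conjecturally for all $n\geq 4$) takes the form
\[
\check\Phi^n_0(c,d) \;=\; \Phi_{\partial\Delta^{n-1}}(c,d)\cdot c \qquad (n\ge 4).
\]
With this identity, $[wc]\check\Phi^n_0=[w]\Phi_{\partial\Delta^{n-1}}$ for every $cd$-monomial $w$ of degree $n-1$, and the required bound $\ge 2^{m(w)}$ follows from the Stanley/Purtill interpretation: among André permutations of $[n]$ realizing a given $cd$-word $w$, each of the $m$ occurrences of $d$ corresponds to a local peak that admits at least two independent realizations, yielding at least $2^m$ distinct André permutations with $cd$-word $w$.

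For part~(2), the argument is parallel but more delicate. Fix $1\le j\le n-1$ and a monomial $wd$ of degree $n-1$ in $w$ plus a final $d$, with $m$ occurrences of $d$ in $w$. I would begin from an explicit ``canonical'' André permutation $\pi_0\in S_{n+1}$ with $cd$-word $wd$ and shelling label $j$ (the existence of $\pi_0$ follows from the shelling decomposition of $\Phi_{\partial\Delta^n}$ and the fact that $j\ge 1$). For each of the $m$ $d$-positions of $w$ one identifies a local transposition of two consecutive values in $\pi_0$ that simultaneously preserves (i) the André property, (ii) the $cd$-word, and (iii) the shelling label. These $m$ transpositions act on disjoint pairs of positions, hence commute, and the resulting $2^m$ permutations are distinct and all contribute to $[wd]\check\Phi^n_j$.

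The main obstacle is verification of condition~(iii): that the local swaps preserve the shelling label. Conditions (i) and (ii) are essentially local and follow from the standard stability of the André property under peak-swaps, but the shelling label is defined in terms of the interaction of the permutation with Stanley's restriction operator, and requires a careful case analysis of how the ``restriction face'' attached to $\pi_0$ transforms under each swap. I expect this to be the most delicate portion of the proof, and the hypotheses $n\ge 4$ in~(1) and $n\ge 5$ in~(2) to arise precisely to avoid small-case degeneracies in that analysis.
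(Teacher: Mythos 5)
There is a genuine gap: your plan restates the theorem rather than proving it. The closed form $\check\Phi^n_0=\Phi_{\partial\Delta^{n-1}}\cdot c$ is in fact true (append the value $n$ to an Andr\'e permutation of $[n-1]$ and invoke Purtill's theorem together with \Cref{prop: coeffAndre}), but you leave it as a conjecture, and even granting it, the step carrying all the content --- that every $cd$-word with $m$ letters $d$ is realized by at least $2^m$ Andr\'e permutations (with prescribed last value $n-j$) --- is only asserted, via ``each occurrence of $d$ corresponds to a local peak that admits at least two independent realizations.'' No such swap mechanism is constructed, and it is doubtful as stated: condition (2) in the definition of an Andr\'e permutation is a \emph{global} condition on the values, so transposing two entries at a peak need not preserve the Andr\'e property, and you yourself concede that preservation of the label is unresolved. (Note also that by \Cref{prop: coeffAndre} the label is nothing subtler than the last value $\pi(n)=n-j$; no ``restriction operator'' analysis is needed, but fixing the last value is precisely what makes a naive swap argument break.) Finally, since $[d^2]\check\Phi^4_1=1<2$, any correct proof of part (2) must use $n\ge 5$ quantitatively, and your sketch contains no step where such a bound could enter.

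For comparison, the paper avoids local swaps entirely. Part (1) and the case $j=1$ of part (2) are proved by induction on $n$ using the Foata--Purtill recursion (\Cref{lem: Andre recursion}): if the word begins with $c$ one fixes $\pi(1)=1$ and reduces to permutations of $[n-1]$; if it begins with $d$ one fixes $\pi(2)=1$ and lets $\pi(1)=k$ range over $n-2$ values, reducing to $[n-2]$, and the factor of $2$ per leading $d$ comes from $(n-2)2^{m-1}\ge 2^m$ --- this is where $n$ large enough is used. The remaining cases $2\le j\le n-1$ are not handled with permutations at all, but with the Ehrenborg--Readdy derivation $G$ and the identity $G(\check\Phi_j^n)=\check\Phi_{j+1}^{n+1}$ (\Cref{thm: Ehrenborg Readdy recursion}), which transfers the $j=1$ bound upward since $G$ preserves nonnegativity of coefficients. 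If you wish to salvage your approach, you would need to replace the heuristic peak-swaps by an actual construction of $2^m$ permutations, and \Cref{lem: Andre recursion} is the natural tool for doing so.
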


The proof of this theorem will rely on two different techniques. Namely, the coefficients of $\check\Phi_j^n(c,d)$ are known to count a particular class of permutations, so-called \emph{Andr\'e permutations} (with some additional properties) \cite{Het}. Second, there is a specific derivation on $\mathbb Z\langle c,d\rangle$, the polynomial ring in non-commuting variables $c$ and $d$ that connects $\check\Phi$-polynomials for different values of $n$ and $j$ \cite{Ehrenborg-Readdy}. We now make this more precise. 

We start by recalling the relevant definitions. For notation and definitions concerning André permutations we follow  \cite{Het}. 

Let $X$ be a totally ordered finite set. A \emph{permutation} of $X$ is a bijection  $\pi:[|X|]\to X$. A \emph{descent} of a permutation $\pi$ is a number $1\le i \le |X|-1$ such that $\pi(i)>\pi(i+1)$.  A permutation is said to have a \emph{double descent} if there is $1\le i \le |X|-2$ such that $i$ and $i+1$ are descents. 

\begin{definition}
A permutation $\pi$ of an ordered set $X$ of size $n$ is called an \emph{André permutation} if the following two conditions hold:
\begin{enumerate}
    \item[(1)] $\pi$ has no double descent. 
    \item[(2)] For every pair of numbers  $2\leq j<j'\leq n-1$ with $\pi(j-1)= \max\{\pi(j-1), \pi(j), \pi(j'-1), \pi(j')\}$ and $\pi(j')= \min\{\pi(j-1), \pi(j), \pi(j'-1), \pi(j')\}$, there exists  $j<j''<j'$ with $\pi(j'')<\pi(j')$. 
\end{enumerate}
\end{definition}
Given an André permutation $\pi$ of a set $X$ with $|X|=n$, one can associate a degree $n$ $cd$-monomial to $\pi$ as follows, where we write $\epsilon$ for the \emph{empty} permutation, i.e., $X=\emptyset$: 
\begin{itemize}
\item[(1)] If $n=0$, then $W(\epsilon)=1$ and if $n=1$, then $W(\pi)=c$ for the unique permutation $\pi$ of $W$. 
\item[(2)] If $n\geq2$, then 
\[
W(\pi)= \begin{cases}
    W(\pi|_{[n-2]})d, \quad\mbox{ if } n-1 \mbox{ is a descent}\\
    W(\pi|_{[n-1]})c, \quad  \mbox{ otherwise}.
\end{cases}
\]
 \end{itemize}
 We will refer to $W(\pi)$ as \emph{$cd$-type} of $\pi$. 
Our interest in André permutations is grounded in the following result of Hetyei (see \cite[Theorem 2]{Het}) that was originally conjectured by Stanley \cite{}. 

\begin{proposition}\label{prop: coeffAndre}
    
    Let $0\le j < n$ be positive integers and let $w$ be a $cd$-monomial of degree $n$. Then $[w]\check{\Phi}^n_j$ is equal to the number of André permutations $\pi$ of $[n]$ with $W(\pi)=w$ and $\pi(n)=n-j$.
\end{proposition}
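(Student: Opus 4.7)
The plan is to deduce Proposition~\ref{prop: coeffAndre} from the stronger identity
\[
\Phi_{\Lambda^n_j}(c,d) \;=\; \sum_{\substack{\pi \text{ Andr\'e permutation of } [n] \\ \pi(n)\in\{n,n-1,\dots,n-j\}}} W(\pi),
\]
since by subtraction $\check\Phi^n_j = \Phi_{\Lambda^n_j} - \Phi_{\Lambda^n_{j-1}}$ then counts precisely those Andr\'e permutations with $\pi(n)=n-j$. The case $j=n-1$ of this refined identity is Stanley's original theorem \cite{Stanley94}, since $\Lambda^n_{n-1}=\partial\Delta^n$ and every Andr\'e permutation of $[n]$ ends in some letter in $\{1,\dots,n\}$; so the refinement essentially tracks which last letter a permutation contributes to.

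I would prove the refined identity by induction on $j$. For the base case $j=0$, the CW-complex $\Lambda^n_0$ consists of a single facet $\sigma_0$ together with an outer $(n-1)$-cell glued along $\partial\sigma_0$, so its $cd$-index can be computed directly and shown to match the enumeration of Andr\'e permutations of $[n]$ with $\pi(n)=n$. For the inductive step, I would apply Stanley's shelling technique \cite[\S3]{Stanley94} to the shelling $\sigma_0,\sigma_1,\dots,\sigma_j,\tau$ of $\Lambda^n_j$ (where $\tau$ is the outer cell); attaching $\sigma_j$ to $\Lambda^n_{j-1}$ produces a specific $cd$-increment, and the goal is to identify this increment monomial by monomial with $\sum_\pi W(\pi)$ over Andr\'e permutations of $[n]$ with $\pi(n)=n-j$. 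The position of $n$ inside $\pi$ encodes how the restriction face of $\sigma_j$ sits inside $\sigma_j$, and deleting $n$ (with suitable relabeling) yields a shorter Andr\'e permutation whose $cd$-contribution is controlled by the inductive hypothesis; the recursive rule $W(\pi) = W(\pi|_{[n-1]})c$ or $W(\pi|_{[n-2]})d$ then matches the geometric shelling increment exactly.

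The main obstacle will be verifying compatibility with the second, more subtle axiom in the definition of an Andr\'e permutation --- the min/max clause demanding an intermediate witness $j''$. One must check that the bijection between shelling data and permutations respects this condition in both directions, so that no forbidden permutations are produced and no legitimate ones are missed. This bookkeeping is essentially the combinatorial heart of Hetyei's argument in \cite{Het}, and requires careful case analysis of how descents and ascents in a partial permutation translate into the $cd$-type under the inductive construction; a secondary technical point is showing that the "outer cell" $\tau$ in the shelling contributes only the permutations with $\pi(n)=n$, which is consistent with the observation that the link of $\tau$ is combinatorially the boundary of a simplex of one lower dimension.
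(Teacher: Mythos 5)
The paper does not prove this statement at all: it is quoted as a known theorem of Hetyei (\cite[Theorem 2]{Het}), resolving a conjecture of Stanley, and the proposition enters the paper purely as a citation. So there is no ``paper proof'' for your argument to parallel; what you are attempting is, in effect, a reconstruction of Hetyei's theorem itself. Your architecture --- the refined identity $\Phi_{\Lambda^n_j}=\sum_{\pi(n)\ge n-j}W(\pi)$, whose telescoping differences give the proposition, with the case $j=n-1$ being the known fact (due to Purtill, not Stanley) that the $cd$-index of the boundary of the simplex is the Andr\'e polynomial --- is a reasonable way to organize such a proof, and it is close in spirit to the Purtill/Hetyei line of argument.

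However, as a standalone proof the proposal has a genuine gap, and you essentially acknowledge it: the entire content of the statement is the claim that the $cd$-increment $\Phi_{\Lambda^n_j}-\Phi_{\Lambda^n_{j-1}}$ is enumerated, monomial by monomial, by Andr\'e permutations with prescribed last letter $n-j$, and your sketch defers exactly this step to ``the combinatorial heart of Hetyei's argument.'' No bijection between shelling data and permutations is actually constructed, and no verification of the second Andr\'e axiom (the min/max witness condition) is carried out in either direction, so the inductive step is asserted rather than proved. Two secondary inaccuracies compound this: $\Lambda^n_j$ is not obtained from $\Lambda^n_{j-1}$ by simply attaching $\sigma_j$ --- the outer cell $\tau$ is reattached along the new boundary $\partial\Gamma^n_j$, so both changes contribute to the increment and must be tracked via Stanley's semisuspension lemmas --- and the interval below $\tau$ is the face poset of $\partial\Gamma^n_j$, which is the boundary of a simplex only when $j=0$, so the closing remark about the link of $\tau$ does not hold for general $j$. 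As it stands, the proposal is an outline of a known, nontrivial theorem rather than a proof; if you intend to use the statement, the honest course is to cite Hetyei, as the paper does.
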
 

Given the previous result, one way to prove \Cref{thm: Andre} is to show that there are at least $2^m$ André permutations with $m$ occurrences of $m$ if the image of the maximal element is fixed. Indeed, we will severely undercount the permutations of interest to simplify the procedure. The refined counts are expected to be complicated in general and not necessary for our purposes. Our arguments will make use of the following alternative characterization of André permutations (see \cite[Proprieté 3.6]{Foata} and \cite[Proposition 5.5]{Purtill}).

\begin{lemma}\label{lem: Andre recursion}
    Let $X$ be an ordered set of size $n$, $\pi:[n]\to X$ be a permutation and let $m$ be the preimage of the minimal element of $X$. Then $\pi$ is an André permutation if and only if $\pi|_{[m]}$ and $\pi|_{[m+1,n]}$ are André permutations. 
\end{lemma}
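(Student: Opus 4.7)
The plan is to prove both directions of the equivalence by exploiting two elementary structural consequences of $\pi(m) = \min X$. Since $\pi(m)$ is strictly smaller than every other value of $\pi$, position $m-1$ is automatically a descent of $\pi$ (whenever $m > 1$), position $m$ is automatically an ascent (whenever $m < n$), and for every $i \neq m$ one has $\pi(i) > \pi(m)$. The first two facts prevent any pair of consecutive descents of $\pi$ from straddling position $m$; the third makes $j'' = m$ an immediate witness for condition (2) whenever $m$ lies strictly interior to an admissible tuple.

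For the forward direction, both defining conditions are inherited by the restrictions essentially for free. Descents of $\pi|_{[m]}$ and of $\pi|_{[m+1,n]}$ are literally descents of $\pi$ at the corresponding positions, so the no-double-descent property transfers. Every admissible tuple $(j, j')$ of condition (2) in a restriction is simultaneously an admissible tuple in $\pi$ with the same four extremal values, and the witness $j''$ supplied by $\pi$ lies strictly between $j$ and $j'$, hence inside the same restriction.

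For the backward direction, assume both restrictions are André. Any hypothetical pair $(i, i+1)$ of consecutive descents in $\pi$ must lie entirely on one side of $m$, contradicting the relevant restriction. For condition (2), given an admissible tuple $(j, j')$ in $\pi$, I would case-split on the position of $m$ relative to $\{j-1, j, j'-1, j'\}$: if all four indices lie on a single side of $m$, the corresponding restriction supplies a witness $j''$; if $m$ is strictly interior to $(j, j')$ and distinct from $j-1$ and $j'-1$, then $j'' = m$ works; and the remaining coincidences $m \in \{j-1, j, j'-1, j'\}$ are ruled out by combining the extremal hypotheses with $\pi(m) = \min X$ and injectivity of $\pi$ (for instance, $m = j$ would force $\pi(j') \leq \pi(j) = \min X$, contradicting injectivity).

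The main obstacle is the most delicate boundary case, $m = j'$, in which $\pi(j') = \min X$ is automatically the minimum of the four extremal values but no $j''$ can possibly satisfy $\pi(j'') < \min X$. The resolution is to combine the maximality of $\pi(j-1)$ with the no-double-descent property of $\pi|_{[m]}$: in the subcase $j = m-1$ one obtains directly a double descent $(m-2, m-1)$ of $\pi|_{[m]}$, contradicting the assumption; in the remaining subcases $j \leq m-2$ one unpacks $\pi(m-2) < \pi(m-1)$ from no double descent and chases the extremal configuration to a contradiction within $\pi|_{[m]}$. Carrying out this finite case check is the genuine content of the lemma; once it is disposed of, the equivalence assembles immediately.
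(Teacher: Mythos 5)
Your overall architecture is the standard one and most of it is sound: the forward direction, the transfer of the no-double-descent condition, the witness $j''=m$ when $m$ is strictly interior to $(j,j')$, and the elimination of the coincidences $m\in\{j-1,j,j'-1\}$ are all fine. The genuine gap is exactly the case you single out as delicate, namely $j'=m$ with $j\le m-2$, and your proposed resolution (``chase the extremal configuration to a contradiction within $\pi|_{[m]}$'') does not work: with the definition of Andr\'e permutation exactly as printed in the paper (condition (2) quantified over $2\le j<j'\le n-1$) there is no contradiction available, because the instance of condition (2) you would need for $\pi|_{[m]}$ is the pair $(j,m)$, which lies outside the quantifier range for the restriction. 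Concretely, take $\pi=42315$, so $m=4$, $\pi|_{[4]}=4231$ and $\pi|_{[5,5]}=(5)$. Under the printed definition $4231$ is Andr\'e (no double descent, and the only admissible-range pair $(2,3)$ has a failing hypothesis), and the singleton is trivially Andr\'e; yet $\pi$ violates condition (2) at $(j,j')=(2,4)$, since $\pi(1)=4$ is the maximum, $\pi(4)=1$ the minimum, and the only candidate $j''=3$ has $\pi(3)=3>1$. This is precisely your configuration with $j=2\le m-2$, so the contradiction you hope to extract from $\pi|_{[m]}$ simply is not there, and as stated the backward implication is false.

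What this reveals is an off-by-one in the paper's printed definition rather than a fixable detail of your chase: condition (2) must be quantified over $2\le j<j'\le n$ (Hetyei's definition), as is forced by \Cref{prop: coeffAndre} together with the paper's own data --- from the table one reads $\check\Phi^4_3=\tfrac14 P^4_3=c^2d+d^2$, so the coefficient of $d^2$ is $1$, whereas under the printed definition both $3241$ and $4231$ are Andr\'e permutations of $[4]$ ending in $1$ with $cd$-type $d^2$. Once the range is corrected, your delicate case closes in one line and needs no chase: if $j'=m$ and the hypothesis of condition (2) held for $\pi$, then the same pair $(j,m)$ is admissible for $\pi|_{[m]}$, and Andr\'e-ness of the restriction would demand a witness $j''<m$ with $\pi(j'')<\pi(m)=\min X$, which is impossible; hence the hypothesis never occurs (your subcase $j=m-1$, argued via the double descent at $(m-2,m-1)$, is correct but becomes redundant). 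With that replacement the remaining cases of your argument go through verbatim. As written, however, the crucial case is unproved, and for the literal definition in the paper no proof can exist. (Note also that the paper itself does not prove this lemma but quotes it from Foata and Purtill.)
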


We now provide the proof of part (1) of \Cref{thm: Andre}. 

\begin{proposition}\label{lem: Phi^n_0}
    Let $n\ge 4$ and let $w$ be a $cd$-monomial of degree $n-1$ with $m$ occurrences of the variable $d$. Then, there  are at least $2^m$ André permutations $\pi$ with $W(\pi)=wc$ and $\pi(n)=n$. Consequently, 
    \[
    [wc]\check\Phi^n_{0}\ge 2^m.
    \]
\end{proposition}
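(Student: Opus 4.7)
By Proposition~\ref{prop: coeffAndre}, the quantity $[wc]\check\Phi^n_0$ equals the number of André permutations $\pi$ of $[n]$ with $W(\pi) = wc$ and $\pi(n) = n$. The plan is to exhibit at least $2^m$ such permutations by induction on $m$.

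The base case $m = 0$ forces $wc = c^n$, which imposes no descents on $\pi$; hence $\pi$ must be the identity, and $\pi(n) = n$ is automatic, giving $2^0 = 1$ permutation. For the inductive step with $m \geq 1$, I isolate the rightmost $d$ of $w$ by writing $w = w'' d c^k$, where $w''$ is a $cd$-monomial containing $m-1$ occurrences of $d$. The condition $W(\pi) = w'' d c^{k+1}$ combined with $\pi(n) = n$ forces a structural decomposition: the restriction $\pi|_{[n-k-3]}$ is an André permutation of a subset $A \subseteq [n-1]$ of size $n-k-3$ with $cd$-type $w''$, while the remaining positions satisfy the ``valley followed by increasing tail'' pattern $\pi(n-k-2) > \pi(n-k-1) < \pi(n-k) < \cdots < \pi(n-1) < \pi(n) = n$, the last $k+2$ values being exactly $B = [n-1]\setminus A$.

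Applying the inductive hypothesis to $w''c$ (a monomial of degree $n-k-3$, viewed in rank $n-k-2$) produces at least $2^{m-1}$ André permutations of $[n-k-2]$ whose restriction to positions $1, \ldots, n-k-3$ yields an André permutation of $[n-k-3]$ with $cd$-type $w''$. For each such permutation $\tau$, I then exhibit two distinct André extensions to a permutation of $[n]$. When $k \geq 1$ the doubling is direct: embedding $\tau$ canonically into $A = [n-k-3]$ with complement $B = \{n-k-2, \ldots, n-1\}$, any choice $v \in B \setminus \{\min B\}$ is a valid value for $\pi(n-k-2)$, and each gives a distinct André extension, yielding $k+1 \geq 2$ options. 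When $k = 0$ the completion for a fixed value set $A$ is uniquely determined, and the second extension is instead produced by choosing a different embedding of $\tau$ (for instance, shifting $\tau$'s largest value into $B$ and replacing it in $A$ by an element of $\{n-2, n-1\}$), again giving an André permutation with the same $cd$-type and $\pi(n) = n$.

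The main technical obstacle is verifying that the constructed extensions remain André, in particular condition (2) of the André property for four-tuples whose indices straddle the boundary $\{n-k-3,\, n-k-2\}$. The key observation enabling this verification is that every element of $B$ is strictly larger than every element of $A$; consequently $\pi(n-k-2)\in B$ is the unique maximum of any four-tuple with indices in both regions, forcing the hypothesis of condition (2) to be satisfied only in vacuous ways. Condition (2) for four-tuples entirely inside $A$ follows from $\tau$ being André, and the no-double-descent condition is immediate from the strict inequalities in the decomposition. Combining the at least $2^{m-1}$ prefixes with the two valid completions for each produces the required $2^m$ André permutations, completing the induction.
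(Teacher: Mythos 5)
There is a genuine gap in your induction. You induct on $m$ and apply the statement itself to $w''c$ ``viewed in rank $n-k-2$'', but the rank drops by $k+3\ge 3$ at each step and can leave the range $n\ge 4$ where the proposition is valid. The critical instance is $w''=d$ at rank $3$: there the claimed bound is simply false, since the only André permutation of $[3]$ of type $dc$ ending in $3$ is $213$, i.e.\ $[dc]\check\Phi^3_0=1<2^1$ (this is exactly why the statement is restricted to $n\ge 4$). This is not a removable technicality: take $n=5$ and $w=d^2$, so $m=2$ and the target is $[d^2c]\check\Phi^5_0\ge 4$, which is tight ($[d^2c]\check\Phi^5_0=4$, the permutations being $21435$, $31425$, $41325$, $32415$). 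Your reduction gives $w''=d$, $k=0$, hence only the single prefix $\tau=21$, and your two completions (the canonical embedding plus one shifted embedding) produce at most two, or with both shifts three, permutations of type $d^2c$ ending in $5$; in particular $32415$ is never produced, since its prefix uses the value set $\{2,3\}$, an embedding your recipe excludes. Allowing arbitrary embeddings does not rescue the argument as stated, because then $B$ is no longer entirely above $A$, the ``unique maximum'' vacuousness argument for condition (2) across the boundary breaks down, and indeed not every embedding works (e.g.\ $42315$ is not André). A smaller, fixable imprecision: for pairs $(j,j')$ with $j'$ equal to the last position of the prefix, condition (2) does not follow from $\tau$ alone (e.g.\ $4231$ satisfies the stated conditions but admits no André extension); you need that $\tau$ with the maximum appended, i.e.\ the permutation $\sigma$ furnished by the inductive hypothesis, is André.

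For comparison, the paper's proof inducts on $n$ (base cases $n=4,5$) and peels the \emph{leftmost} letter of $w$, using \Cref{lem: Andre recursion} at the position of the minimum: if $w=cw'$ one fixes $\pi(1)=1$ and reduces to rank $n-1$ with the same number of $d$'s; if $w=dw'$ one fixes $\pi(2)=1$ and lets $\pi(1)=k$ range over $n-2$ values, reducing to rank $n-2$ with one fewer $d$, so the factor $n-2\ge 2^1$ (rather than a hand-built factor of $2$) absorbs the lost power of two while the rank only drops by at most $2$, keeping the induction inside the valid range. If you want to salvage your right-to-left peeling, you would need either stronger base cases covering the monomials whose reduction path hits $d$ at rank $3$ (equivalently, $w$ beginning with $dd$), or a completion step that provably yields strictly more than two André extensions in the $k=0$ case, which requires verifying condition (2) for embeddings where $B$ is not above $A$.
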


\begin{proof}
    The proof is by induction on $n$ where we need the two base cases $n=4$ and $n=5$ since in the induction step we will need the two previous values of $n$. A direct computation shows
    \[\check\Phi^4_{0}= c^4+2dc^2+2cdc
    \]
    and
    \[
    \check\Phi^5_0=c^5+3dc^3+5cdc^2+3c^2dc+4d^2c
    \]
and the claim is easily verified. 

    For the induction step, assume that $n\ge 6$. Given a $cd$-monomial $w$ of degree $n-1$, we distinguish two cases depending on whether $w$ starts with a $c$ or $d$: 

 {\sf Case 1:} $w = cw'$ for a degree $n-2$ $cd$-monomial $w'$. André permutations of type $wc$ cannot have a descent at position $1$, which is, in particular, fulfilled if $1$ is mapped to $1$. We show the stronger statement that there are at least $2^m$ André permutations $\pi$ of type $w$ with $\pi(1)=1$ and $\pi(n)=n$. By \Cref{lem: Andre recursion}, these are in bijection with André permutations of $[n-1]$, mapping $n-1$ to $n-1$ and that are of type $w'$. Since $w$ and $w'$ have the same number of occurrences of the variable $d$, the claim follows from the induction hypothesis.
        
{\sf Case 2:} $w = dw'$  for a degree $n-3$ $cd$-monomial $w'$. Since $w$ starts with a $d$, any André permutation of type $wc$ has a descent at position $1$. We count André permutations $\pi$ of type $wc$ with $\pi(1)=k$ for a fixed $k\in \{2,\ldots,n-1\}$, $\pi(2)=1$ and $\pi(n)=n$. Again, by \Cref{lem: Andre recursion}, these are in bijection with André permutations of $[n-2]$ of type $w'c$ mapping $n-2$ to $n-2$ (for any fixed valued of $k$). Since the variable $d$ occurs $m-1$ times in $w'$, the induction hypothesis now implies
\[
[wc]\check\Phi^n_{0} \ge (n-2)\cdot [w'c]\check\Phi^{n-2}_0 \ge (n-2)2^{m-1}\ge 2^m,
\]
since $n\geq 6$. The second claim follows from \Cref{prop: coeffAndre}.
    
\end{proof}

The next proposition provides the proof of \Cref{thm: Andre} (2) for the case $j=1$.

\begin{proposition}\label{lemma: PhiLowBound}
    Let $n\ge 5$ and  let $w$ be a $cd$-monomial of degree $n-2$ with $m$ occurrences of the variable $d$ in $w$. Then there are at least $2^m$ André permutations $\pi$ of $[n]$  with $W(\pi)=wd$ and $\pi(n)=n-1$. Consequently, 
    \[
    [wd]\check{\Phi}^n_1\geq 2^m.
    \]
\end{proposition}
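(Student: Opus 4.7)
The plan is to argue entirely on the combinatorial side via Proposition~\ref{prop: coeffAndre}: it suffices to exhibit at least $2^m$ André permutations $\pi$ of $[n]$ with $W(\pi)=wd$ and $\pi(n)=n-1$. Note that the trailing $d$ forces position $n-1$ to be a descent, hence together with $\pi(n)=n-1$ we automatically get $\pi(n-1)=n$, so the last two values of $\pi$ are pinned to $n$ and $n-1$. The proof will proceed by induction on $n$, modelled closely on the proof of Proposition~\ref{lem: Phi^n_0}, with two base cases $n=5$ and $n=6$ (to accommodate that Case 2 below decreases the parameter by two), each checked by direct enumeration of André permutations of the appropriate type (or equivalently by writing out $\check\Phi_1^5$ and $\check\Phi_1^6$ explicitly). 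The central idea is to produce, in each of the two possible starting letters of $w$, an easily controlled subset of André permutations that is already large enough.

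For the inductive step assume $n\ge 7$ and split on the first letter of $w$. The key observation, a straightforward consequence of the recursive definition of $W$, is that the leading letter of $W(\pi)$ is determined by the descent at position $1$, and more generally that if the value $1$ sits at position $m\in\{1,2\}$ then $W(\pi)$ factors as $W(\pi|_{[m]})\cdot W(\pi|_{[m+1,n]})$; combined with Lemma~\ref{lem: Andre recursion}, this turns the counting into a problem on the restricted permutation. In Case 1, $w=cw'$: impose the extra condition $\pi(1)=1$. Then $\pi\mapsto\pi|_{[2,n]}$ is a bijection onto André permutations of the ordered set $[2,n]\cong[n-1]$ of type $w'd$, with last value equal to the second-largest element of $[2,n]$, namely $n-1$. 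Since $w'$ still has $m$ occurrences of $d$, the induction hypothesis at $n-1\ge 6$ yields at least $2^m$ such permutations.

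In Case 2, $w=dw'$, I fix $\pi(2)=1$ and, for each $k\in\{2,\dots,n-2\}$ separately, also fix $\pi(1)=k$ (note $k$ cannot equal $n-1$ or $n$ since these are taken at positions $n$ and $n-1$). Then $\pi|_{[1,2]}=(k,1)$ has $cd$-type $d$, and $\pi\mapsto\pi|_{[3,n]}$ is a bijection onto André permutations of the ordered set $[2,n]\setminus\{k\}$ of size $n-2$, of type $w'd$, with last value equal to the second-largest element of this set (which is $n-1$ since $k\le n-2$). The induction hypothesis at $n-2\ge 5$, applied to $w'$ with its $m-1$ occurrences of $d$, gives at least $2^{m-1}$ permutations for each $k$, for a total of at least $(n-3)\cdot 2^{m-1}\ge 2\cdot 2^{m-1}=2^m$, closing the induction.

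The main technical point to verify carefully is the factorization $W(\pi)=W(\pi|_{[m]})\cdot W(\pi|_{[m+1,n]})$ at $m=1$ (Case 1) and $m=2$ (Case 2); this follows from the recursive definition of $W$ by a short induction on $n$, splitting on whether position $n-1$ is a descent, which is where I expect most of the bookkeeping to go. The other obstacle is the base case enumeration for $n=5,6$, which is mechanical but needs each monomial $w$ of degree $n-2$ to be checked individually; for the smallest case ($n=5$, monomials $c^3$, $cd$, $dc$) one verifies directly that there is one, two, and two André permutations of the required type respectively, matching $2^0$, $2^1$, $2^1$.
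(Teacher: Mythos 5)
Your proposal is correct and follows essentially the same route as the paper's proof: induction on $n$ with base cases $n=5,6$, splitting on whether $w$ starts with $c$ or $d$, fixing $\pi(1)=1$ in the first case and $\pi(1)=k,\ \pi(2)=1$ in the second, and reducing via \Cref{lem: Andre recursion} to the induction hypothesis on $[n-1]$ resp.\ $[n-2]$. Your version is in fact slightly more careful than the paper's write-up, since you make explicit the factorization of the $cd$-type at the position of the minimum and get the correct count $(n-3)2^{m-1}$ of choices of $k$ (the paper's displayed bound carries over notation and the factor $(n-2)$ from the preceding proposition), neither of which affects the validity of the argument.
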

We note that the previous proposition is not true for $n=4$, since $\check\Phi_4^1=d
c^2+2cdc+c^2d+d^2$. Hence, $[d^2]\check\Phi^1_4=1<2^1$. 

\begin{proof}

    The proof is by induction with the two base cases $n=5$ and $n=6$ that can be checked manually by computing $\check{\Phi}_5^1$ and $\check{\Phi}_6^1$. More precisely, we have:
\[
    \check{\Phi}^5_{1}=c^3d + 3c^2dc + 3cdc^2 + 2cd^2 + dc^3 + 2dcd + 4d^2c\]
    and
    \[
    \check{\Phi}^6_{1}=c^4d + 4c^3dc + 6c^2dc^2 + 3c^2d^2 + 4cdc^3 + 5cdcd + 12cd^2c + dc^4 + 3dc^2d + 10dcdc + 8d^2c^2 + 4d^3.
    \]
    
    To proceed let $n\ge 7$ and assume that the result is true for smaller values of $n$. Let $w$ be a $cd$-monomial of degree $n-2$ with $m$ occurrences of $d$. We distinguish two cases.
    
    {\sf Case 1:} $w=cw'$ for a degree $n-3$ $cd$-monomial $w'$. We show the stronger claim that there are at least $2^m$ André permutations $\pi$ of type $wd$ with $\pi(n)=n-1$ and $\pi(1)=1$. Using \Cref{lem: Andre recursion}, it follows that these are in bijection to André permutations of $[n-1]$ ending in $n-2$ (and starting in an arbitrary value) of type $w'd$. Since in $w'$ the variable $d$ occurs  $m$ times, we conclude by induction that there at least $2^m$ many such permutations.
        
     {\sf Case 2:}  $w=dw'$ for a degree $n-4$ $cd$-monomial $w'$. Since $w$ starts with a $d$, any André permutation of type $wd$ has a descent at position $1$. We count André permutations $\pi$ of type $wc$ with $\pi(1)=k$ for a fixed $k\in \{2,\ldots,n-2\}$, $\pi(2)=1$ and $\pi(n)=n-1$. By \Cref{lem: Andre recursion}, these are in bijection with André permutations of type $w'd$ mapping $n-2$ to $n-3$ (for any fixed valued of $k$). Since the variable $d$ occurs $m-1$ times in $w'$, the induction hypothesis now implies
\[
[wc]\check\Phi^n_{0} \ge (n-2)\cdot [w'c]\check\Phi^{n-2}_0 \ge (n-2)2^{m-1}\ge 2^m,
\]
since $n\geq 6$. The second claim follows from \Cref{prop: coeffAndre}.
    
\end{proof}

We want to emphasize that the approach above also works (with slight changes) for other values of $j$. However, we were not able to make this method work if $j=n-1$. To avoid further technicalities, we decided to only use it for $j=1$ since all other cases can be handled uniformly with a different approach which we now describe.
In \cite{Ehrenborg-Readdy}, Ehrenborg and Readdy consider the unique derivation $G$ on $\mathbb{Z\langle c,d\rangle}$ satisfying $G(c)=d$ and $G(d)=cd$. The main reason why this derivation is of interest to us is the following: 

\begin{theorem}\label{thm: Ehrenborg Readdy recursion}\cite[Theorem 8.1]{Ehrenborg-Readdy}
    Let $0\le j < n$ be positive integers. Then $G(\check\Phi_j^n)= \check\Phi_{j+1}^{n+1}$.  
\end{theorem}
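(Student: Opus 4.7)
The plan is to verify the derivation identity by reducing to the geometry of the CW complexes $\Lambda_j^n$. Since $\Phi_{\Lambda_k^n} = \sum_{j=0}^k \check\Phi_j^n$, summing the claim $G(\check\Phi_j^n) = \check\Phi_{j+1}^{n+1}$ over $j = 0, \ldots, k$ rewrites the identity in the telescoped form
\[
G(\Phi_{\Lambda_k^n}) = \Phi_{\Lambda_{k+1}^{n+1}} - \Phi_{\Lambda_0^{n+1}}, \qquad 0 \leq k \leq n-1,
\]
and conversely, taking successive differences in $k$ recovers the original family of identities. The advantage of this reformulation is that both sides are now $cd$-indices of honest Eulerian CW-spheres rather than formal differences.

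Next, I would give $G$ a geometric interpretation. A natural candidate is a coning-and-capping operation: for an $(n-1)$-dimensional CW sphere $\Sigma$, form the cone $C\Sigma$ by adjoining an apex vertex, and then cap the resulting $n$-ball with a single new top cell to produce an $n$-sphere. At the level of $ab$-polynomials, coning amounts to inserting a new rank-$1$ element into each flag, and the binary choice of whether a flag uses the apex is exactly the Leibniz split driving the derivation. Under the substitution $c = a+b$, $d = ab+ba$ this lifts to a derivation on $\mathbb{Z}\langle c,d\rangle$ satisfying $G(c) = d$ and $G(d) = cd$. With this picture in hand, one expects $\Lambda_{k+1}^{n+1}$ to decompose, at the level of flag $f$-vectors, as the coning-and-capping of $\Lambda_k^n$ plus a single corrective contribution equal to $\Phi_{\Lambda_0^{n+1}}$.

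The hard part will be rigorously matching the geometric construction to $\Lambda_{k+1}^{n+1}$ and pinpointing the exact origin of the correction $\Phi_{\Lambda_0^{n+1}}$: the semisuspensions $\Lambda_j^n$ are \emph{not} genuine cones, and the capping cell interacts nontrivially with both the would-be apex and with the added facets $\sigma_0,\ldots,\sigma_j$. If this geometric route proves too delicate, a safer but less elegant fallback is to compute the flag $f$-vectors associated to $G(\check\Phi_j^n)$ and $\check\Phi_{j+1}^{n+1}$ directly via the explicit formulas of \Cref{lem: identity hard} together with the simplicial $f$-to-flag-$f$ transform, then match coefficients monomial-by-monomial; this is mechanical, but it circumvents the need to interpret $G$ geometrically.
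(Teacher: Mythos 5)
The paper does not prove this statement at all: it is imported verbatim from Ehrenborg--Readdy \cite[Theorem 8.1]{Ehrenborg-Readdy}, so your write-up would have to stand as a complete, self-contained argument. As it stands it is a strategy outline rather than a proof. The telescoped reformulation $G(\Phi_{\Lambda_k^n})=\Phi_{\Lambda_{k+1}^{n+1}}-\Phi_{\Lambda_0^{n+1}}$ is a correct but purely formal rewriting (it uses only $\check\Phi_j^n=\Phi_{\Lambda_j^n}-\Phi_{\Lambda_{j-1}^n}$), and every substantive step after it is deferred: you yourself flag the identification of the coning-and-capping of $\Lambda_k^n$ with $\Lambda_{k+1}^{n+1}$ and the origin of the correction term $\Phi_{\Lambda_0^{n+1}}$ as ``the hard part,'' and neither that nor the fallback computation is carried out. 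That hard part is exactly the content of the theorem, so nothing has been proved.

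Beyond incompleteness, the geometric heuristic is not right as stated. The effect of coning (the pyramid operation) on the $cd$-index is not given by $G$ alone: in Ehrenborg--Readdy's own framework one has a formula of the shape $\Phi_{\mathrm{Pyr}(P)}=\tfrac12\bigl(c\,\Phi_P+\Phi_P\,c+G(\Phi_P)\bigr)$, so the ``Leibniz split over whether a flag uses the apex'' produces the extra terms $c\,\Phi_P+\Phi_P\,c$ and a factor $\tfrac12$ that your sketch ignores; extracting $G$ by itself, explaining the index shift $j\mapsto j+1$ (which encodes how the apex interacts with the shelling order of $\sigma_0,\dots,\sigma_j$ and with the semisuspension cell), and accounting for the correction $\Phi_{\Lambda_0^{n+1}}$ all require genuine work that is absent. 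The fallback is also not ``mechanical'' as described: to match coefficients you would need an explicit description of how the derivation $G$ acts at the level of $ab$-polynomials or flag $f$-vectors (e.g.\ via the coproduct), which you do not supply; \Cref{lem: identity hard} and equations \eqref{eq: identity 1}--\eqref{eq: identity 2} give flag data for the complexes $\Lambda_i^n$, but say nothing about the flag data of $G$ applied to a $cd$-polynomial. So the proposal has a genuine gap on both of its proposed routes.
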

This identity, together with the fact that $G$ is a derivation, i.e., it obeys Leibniz rule, will be the second important trick to complete the proof of \Cref{thm: Andre}.
\begin{remark}
    Notice that the bounds here are very far from tight. Yet the argument somehow needs both parts, one could take care of different values of $j$. Many of the choices along the proof are arbitrary in order to make sure that contribution is large enough. In the permutation case we could allow the value of 1 to be anything, or in the second part look at permutations starting with $k1$. In the derivation proof (which would also work for any value of $j\ge 2$) we need to find enough monomials to apply $G$ and produce coefficients.
\end{remark}

The next statement completes the proof of \Cref{thm: Andre} (2).
\begin{proposition}
Let $n\geq 5$, $2\leq j\leq n-1$ and $w$ be a $cd$-monomial of degree $n-2$ with $m$ occurrences of the variable $d$. Then there are at least $2^m$ André permutations $\pi$ of $[n]$  with $W(\pi)=wd$ and $\pi(n)=n-j$. Consequently, 
    \[
    [wd]\check{\Phi}^n_j\geq 2^m.
    \]
\end{proposition}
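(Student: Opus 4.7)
The plan is to induct on $n$, with base case $n = 5$ (for $j \in \{2,3,4\}$) handled by direct inspection of the explicit formulas for $\check\Phi^5_j$ listed in the proof of \Cref{lemma: PhiLowBound}, and the inductive step leveraging the derivation identity $G(\check\Phi^{n-1}_{j-1}) = \check\Phi^n_j$ from \Cref{thm: Ehrenborg Readdy recursion}.

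For the inductive step with $n \geq 6$, I would expand
\[
[wd]\check\Phi^n_j = [wd]G(\check\Phi^{n-1}_{j-1}) = \sum_u M(u,wd)\cdot [u]\check\Phi^{n-1}_{j-1},
\]
where $M(u,wd)$ counts the positions in $u$ at which applying $G$ produces $wd$. Since $\check\Phi^{n-1}_{j-1}$ has nonnegative coefficients by \Cref{prop: coeffAndre}, it suffices to identify enough contributing monomials $u$ of degree $n-1$. The natural families are \emph{Type A}, in which $u$ arises from $wd$ by deleting a $c$ directly preceding a $d$ (so $G(d)=cd$ at that position restores $wd$), and \emph{Type B}, in which $u$ arises from $w$ by changing one of its $d$'s to $c$ and appending a final $d$ (so $G(c)=d$ at the new $c$ restores $wd$). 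In both cases $u$ has degree $n-1$ and ends in $d$, so either the inductive hypothesis (when $j \geq 3$) or \Cref{lemma: PhiLowBound} (when $j = 2$, valid because $n-1 \geq 5$) applies.

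The argument then splits according to whether $wd$ contains a $cd$ substring. If it does, pick any such substring and take the associated Type A monomial $u = \tilde u d$, where $\tilde u$ has degree $n-3$ and the same number $m$ of $d$'s as $w$; the hypothesis immediately gives $[u]\check\Phi^{n-1}_{j-1} \geq 2^m$ and hence the desired bound. If it does not, then $wd$ is forced to equal $d^{n/2}$, so $n$ is even, $w = d^{n/2-1}$, and $m = n/2 - 1 \geq 2$ thanks to $n \geq 6$. In this regime I would use the $m$ distinct Type B monomials $u_k = d^{k-1}cd^{m-k+1}$ for $k=1,\dots,m$, each of the form $\tilde u_k d$ with $\tilde u_k$ of degree $n-3$ carrying $m-1$ occurrences of $d$, so that $[u_k]\check\Phi^{n-1}_{j-1} \geq 2^{m-1}$ and summing yields $[wd]\check\Phi^n_j \geq m\cdot 2^{m-1} \geq 2^m$.

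The principal obstacle is precisely this exceptional case $wd = d^{n/2}$, where Type A contributes nothing and one must assemble the bound from several Type B monomials; the inequality $m\cdot 2^{m-1}\geq 2^m$ needs $m \geq 2$, which is exactly what $n \geq 6$ forces. A secondary subtlety is that the base case $n = 5$ must be checked by hand, since for $n \leq 4$ the corresponding statement even for $j = 1$ is known to fail (cf.\ the counterexample following \Cref{lemma: PhiLowBound}), so one cannot bootstrap the recursion from below $n = 5$.
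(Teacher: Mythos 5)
Your proposal is correct and follows essentially the same route as the paper's proof: induction on $n$ grounded in the explicit rank-$5$ data, the Ehrenborg--Readdy identity $G(\check\Phi^{n-1}_{j-1})=\check\Phi^{n}_{j}$ combined with the nonnegativity of the coefficients of $\check\Phi^{n-1}_{j-1}$, a case split on whether $wd$ contains a $cd$ (equivalently, whether $w$ contains a $c$), and the reduction to \Cref{lemma: PhiLowBound} when $j-1=1$. The only cosmetic difference is in the exceptional case $w=d^{(n-2)/2}$, where the paper uses just the two source monomials $cd^{m}$ and $dcd^{m-1}$ to get exactly $2\cdot 2^{m-1}=2^{m}$, while you sum over all $m$ monomials $d^{k-1}cd^{m-k+1}$, obtaining $m\cdot 2^{m-1}\geq 2^{m}$, which is equally valid since $n\geq 6$ forces $m\geq 2$.
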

\begin{proof}
As in the proof of \Cref{lemma: PhiLowBound} we proceed by induction on $n$ and first verify the two base cases $n=5$ and $n=6$ by computing $\check\Phi^j_5$ and $\check\Phi_j^6$, explicitly. We have
    \begin{align*}
    \check{\Phi}^5_{2}=&c^3d + 2c^2dc + cdc^2 + 4cd^2 + 3dcd + 3d^2c\\
    \check{\Phi}^5_{3}=&c^3d + c^2dc + 4cd^2 + 3dcd + d^2c\\
    \check{\Phi}^5_{4}=&c^3d + 2cd^2 + 2dcd
    \end{align*}
    and
    \begin{align*}
    \check{\Phi}^6_{2}=&c^4d + 3c^3dc + 3c^2dc^2 + 6c^2d^2 + cdc^3 + 8cdcd + 10cd^2c + 4dc^2d + 8dcdc + 4d^2c^2 + 8d^3\\
    \check{\Phi}^6_{3}=&c^4d + 2c^3dc + c^2dc^2 + 7c^2d^2 + 9cdcd + 6cd^2c + 4dc^2d + 5dcdc + d^2c^2 + 10d^3\\
    \check{\Phi}^6_{4}=&c^4d + c^3dc + 6c^2d^2 + 8cdcd + 2cd^2c + 4dc^2d + 2dcdc + 8d^3\\
    \check{\Phi}^6_{5}=&c^4d + 3c^2d^2 + 5cdcd + 3dc^2d + 4d^3.
    \end{align*}
    To proceed let $n\ge 7$ and assume that the result is true for smaller values of $n$. 
    
    The idea is to use \Cref{thm: Ehrenborg Readdy recursion} together with the fact that the derivation $G$ preserves positive coefficients. We will derive  estimates  for the coefficient of a certain $cd$-monomial  $wd$ by looking at the lower degree monomials whose derivation includes $wd$. Since the polynomials $\check\Phi^n_j$ only have non-negative coefficents, no further cancelling can occur. We distinguish two cases, depending on if the variable $c$ occurs in $w$ or does not. 

    {\sf Case 1:} $w=w'cd^{t-1}$ for some $cd$-monomial $w'$ and $t\geq 1$. 
    Since, by Leibniz's rule, $wd=w'G(d)d^{t-1}$, we have $[wd]G(w'd^t)\geq 1$ and it follows from \Cref{thm: Ehrenborg Readdy recursion} that $[wd]\check\Phi_j^n\geq [w'd^t]\check\Phi_{j-1}^{n-1}$. Since $w'd^t$ contains the variable $d$ $m$ times, the claim follows from the induction hypothesis for $j\geq 3$ and it follows from \Cref{lemma: PhiLowBound} for $j=2$. 

Note that, we are always in Case 1, if $n$ is odd. Assume that $n$ is even. We now have to consider an additional case.

{\sf Case 2:} $w=d^{m}$ with $m=\frac{n-2}{2}$. Since $n\geq 7$ and $n$ is even, we have $m\geq 3$. Applying Leibniz's rule we have that $[d^{m+1}]G(cd^{m})\ge 1$ and $[d^{m+1}]G(dcd^{m-1})\ge 1$. Using \Cref{thm: Ehrenborg Readdy recursion} we conclude that 
\[[d^{m+1}]\check{\Phi}^{n}_{j} \ge [cd^{m}]\hat{\Phi}^{n-1}_{j-1}+ [dcd^{m-1}]\hat{\Phi}^{n-1}_{j-1}.
\]
For $j\geq 3$, each summand on the right-hand side of the above equation is at least $2^{m-1}$ by induction. For $j=2$, the same is true due to \Cref{lemma: PhiLowBound}. This finishes the proof.
\end{proof}

\begin{remark}\label{rem:j equals 1}
We note that the case $j=1$ cannot be shown directly using the recursion due to Ehrenborg and Readdy. The issue here is that in $\check\Phi^n_0$ all monomials with positive coefficients end with a $c$ since the coefficients count André permutations ending in $n$. In particular, there is only one monomial in $\check\Phi_0^n$ whose derivation contains a specific monomial $wd$. Indeed, $[wd]G(wc)\geq 1$. In order to conclude as in the previous proof, one would need the bound $[wc]\check\Phi_0^n\geq 2^{m+1}$, if the variable $d$ occurs $m$ times in $w$. However, we were only able to show that 
$[wc]\check\Phi_0^n\geq 2^{m}$ (see \Cref{thm: Andre} (1)).
\end{remark}

\section{$P$-polynomials and their coefficients}\label{sec : P poly}
In this section, we will provide the last result needed to show that non-negativity of the $cd$-index of a Buchsbaum semi-Eulerian simplicial poset. For this aim, we need to introduce the following family of polynomials:
\begin{definition}
 For a positive $n$ and $0\le j \le n-1$, we let 
\[
P_{j}^n(c,d)= \sum_{i=j}^{n-1}(-1)^{i-j}\binom{n}{i}\check\Phi_i^n(c,d)\in \mathbb{Z}\langle c,d\rangle
\]
\end{definition}
For instance we have that $P^2_0=c^2-2d$ and $P^2_1=2d$. We include a list of all polynomials $P^n_j$, for $3\leq n\leq 6$.
\begin{table}[h]
    \centering
    \begin{tabular}{l|l|l}
     $P^3_{0} = c^3 \underline{- 2dc}$ & $P^4_{0} = c^4 \underline{- 2c^2d} \underline{- 2dc^2} + 4d^2$ & $P^5_{0}=c^5 \underline{- 2c^2dc} \underline{- 2dc^3} + 4d^2c$\\
     $P^3_{1} = 3dc$ & $P^4_{1} =2c^2d + 2cdc + 4dc^2 \underline{- 4d^2}$ & $P^5_{1}=5c^2dc + 5cdc^2 + 5dc^3$\\
     $P^3_{2} = 3cd$ & $P^4_{2} =  2c^2d + 6cdc + 8d^2$ & $P^5_{2}=5c^3d + 10c^2dc + 10cdc^2 + 10cd^2 + 10dcd + 20d^2c$\\
      & $P^4_{3} = 4c^2d + 4d^2$ & $P^5_{3}= 5c^3d + 10c^2dc + 30cd^2 + 20dcd + 10d^2c$\\
    & & $P^5_{4}=5c^3d + 10cd^2 + 10dcd$
    \end{tabular}
    \label{tab:my_label}
\end{table}
\begin{table}[h]
    \centering
    \begin{tabular}{l}
       $P^6_0=c^6 \underline{- 2c^4d} \underline{- 2c^2dc^2} + 4c^2d^2 \underline{- 2dc^4} + 4dc^2d + 4d^2c^2 \underline{- 8d^3}$\\
       $P^6_1=2c^4d + 4c^3dc + 11c^2dc^2 \underline{- 4c^2d^2} + 9cdc^3 + 12cd^2c + 6dc^4 \underline{- 4dc^2d} + 10dcdc + 8d^2c^2 + 8d^3$\\
       $P^6_2=4c^4d + 20c^3dc + 25c^2dc^2 + 22c^2d^2 + 15cdc^3 + 30cdcd + 60cd^2c + 22dc^2d + 50dcdc + 40d^2c^2 + 16d^3$\\
       $P^6_3=11c^4d + 25c^3dc + 20c^2dc^2 + 68c^2d^2 + 90cdcd + 90cd^2c + 38dc^2d + 70dcdc + 20d^2c^2 + 104d^3$\\
       $P^6_4=9c^4d + 15c^3dc + 72c^2d^2 + 90cdcd + 30cd^2c + 42dc^2d + 30dcdc + 96d^3$\\
       $P^6_5=6c^4d + 18c^2d^2 + 30cdcd + 18dc^2d + 24d^3$
    \end{tabular}
    \caption{The polynomials $P^n_j$, for $n\leq 6$. Monomials with negative coefficient are underlined.}
    \label{tab:my_label}
\end{table}

The following statement is the main result of this section: 

\begin{theorem}\label{thm: PnjPositivity}
    Let $n\ge 3$ be a positive integer and let $2\leq j\leq n-1$. Let $w$ be a $cd$-monomial with $m$ occurrences of the variable $d$. Then:
    \begin{itemize}
        \item[(1)] If $\deg(w)=n-1$, then $[wc]P_j^n\geq \binom{n-1}{j-1}2^m$.
        \item[(2)] If $\deg(w)=n-2$, then $[wd]P_j^n\geq (\binom{n-1}{j-1}-1)2^m$.
    \end{itemize}
    In particular, all coefficients of $P_{j}^n(c,d)$ are non-negative. 
\end{theorem}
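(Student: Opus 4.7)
My plan rests on the telescoping identity
\[
P_j^n + P_{j+1}^n = \binom{n}{j}\check\Phi_j^n,
\]
which follows at once from the defining alternating sum by isolating the $i=j$ term and pairing the rest. Since $\check\Phi_j^n$ has non-negative coefficients by the Andr\'e permutation interpretation (\Cref{prop: coeffAndre}), this identity ties every $P_j^n$ to a positive multiple of a non-negative polynomial, and the proof proceeds by induction on $j$ going downward from $j=n-1$. In the base case, $P_{n-1}^n = n\check\Phi_{n-1}^n$; every Andr\'e permutation of $[n]$ ending in $1$ must have a descent at position $n-1$, so every monomial of $\check\Phi_{n-1}^n$ ends in $d$, and Part (2) follows directly from \Cref{thm: Andre}(2) via $[wd]P_{n-1}^n = n[wd]\check\Phi_{n-1}^n \geq n\cdot 2^m \geq (n-2)2^m$.

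For the inductive step I would rewrite $[u]P_j^n = \binom{n}{j}[u]\check\Phi_j^n - [u]P_{j+1}^n$, which reduces a lower bound on $[u]P_j^n$ to a lower bound on $[u]\check\Phi_j^n$ (supplied by \Cref{thm: Andre}) paired with an upper bound on $[u]P_{j+1}^n$. To close the recursion I would strengthen the inductive hypothesis to carry matching upper bounds for the coefficients of $P_{j+1}^n$ and propagate them using the Pascal identity $\binom{n}{j}=\binom{n-1}{j-1}+\binom{n-1}{j}$; this makes the constants $\binom{n-1}{j-1}2^m$ and $(\binom{n-1}{j-1}-1)2^m$ arise naturally from a telescoping computation.

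The main obstacle is producing the matching upper bound on $[u]P_{j+1}^n$. For this I would invoke the Ehrenborg-Readdy derivation (\Cref{thm: Ehrenborg Readdy recursion}): the decomposition $P_j^n = G(P_{j-1}^{n-1}) + Q_j^n$ with $Q_j^n = \sum_{k=j}^{n-1}(-1)^{k-j}\binom{n-1}{k}\check\Phi_k^n$ splits the problem into a $G$-image (non-negative by induction on $n$, since $G$ preserves non-negativity) and a smaller residue that can be bounded directly by \Cref{thm: Andre}. The delicate case is $j=2$, where $P_1^{n-1}$ may have negative coefficients when $n-1$ is even; there I would use \Cref{thm: Andre}(1), which was proved separately for exactly this boundary. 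The final non-negativity statement is immediate, since every monomial of $P_j^n$ is of the form $wc$ or $wd$.
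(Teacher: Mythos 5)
Your setup is correct as far as it goes: the telescoping identity $P_j^n+P_{j+1}^n=\binom{n}{j}\check\Phi_j^n$, the base case $P_{n-1}^n=n\check\Phi_{n-1}^n$ (handled via \Cref{thm: Andre}(2)), and the decomposition $P_j^n=G(P_{j-1}^{n-1})+Q_j^n$ with $Q_j^n=\sum_{k=j}^{n-1}(-1)^{k-j}\binom{n-1}{k}\check\Phi_k^n$ all check out, and the paper itself uses the telescoping identity (it is how the upper bound in \eqref{eq:helper} is obtained). But the two steps you rely on to close the induction are genuine gaps. First, the residue $Q_j^n$ cannot be ``bounded directly by \Cref{thm: Andre}'': it is an alternating sum of the rank-$n$ polynomials $\check\Phi_k^n$, of exactly the same shape as $P_j^n$ itself, and \Cref{thm: Andre} only gives lower bounds on coefficients of single $\check\Phi_k^n$'s; it says nothing about signed sums of them, so controlling $Q_j^n$ is essentially the original problem again at the same rank. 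The paper's way out is the pair of coefficient identities \eqref{eq:wc-monomials} and \eqref{eq:wd-monomials} (borrowed from Novik), which convert $[wc]$ and $[wd]$ of the differences $\check\Phi_i^n-\check\Phi_{i+1}^n$ into rank-$(n-1)$ data; this is what turns \eqref{eq: rewriting} into the recursion of \Cref{lem:recurrence} and makes induction on $n$ possible. You never invoke anything of this kind. Second, your downward induction on $j$ needs an upper bound on $[u]P_{j+1}^n$ at rank $n$; the only bound the telescoping structure yields for free (from non-negativity of $P_{j+2}^n$) is $[u]P_{j+1}^n\le\binom{n}{j+1}[u]\check\Phi_{j+1}^n$, and to extract anything from $\binom{n}{j}[u]\check\Phi_j^n-\binom{n}{j+1}[u]\check\Phi_{j+1}^n$ you would have to compare coefficients of $\check\Phi_j^n$ and $\check\Phi_{j+1}^n$ across $j$, i.e., precisely the unimodality of $([u]\check\Phi_i^n)_{i}$ that the paper records only as an unexplained empirical observation. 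The usable upper bound in the paper, \eqref{eq:helper}, lives at rank $n-1$ and is only of help because the $[wd]$-recursion produced by \eqref{eq:wd-monomials} also lives at rank $n-1$; your promise to ``carry matching upper bounds'' has no mechanism behind it at fixed $n$.

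The boundary case $j=2$ is likewise not settled by citing \Cref{thm: Andre}(1). In your decomposition the term $G(P_1^{n-1})$ need not be non-negative, since $P_1^{n-1}=\check\Phi_0^{n-1}-P_0^{n-1}$ has negative coefficients; to control it quantitatively one must know exactly which coefficients of $P_0^{n-1}$ are non-zero and how large they are, which is the content of the closed formula $P_0^{n-1}=(c^2-2d)^{\lfloor (n-1)/2\rfloor}$ (times $c$ when $n-1$ is odd) in \Cref{lem: Pn0}. That formula is what limits the negative contribution to $2^{m+1}$ in \Cref{lem: Pn2} and is exactly where the ``$-1$'' in the bound $(\binom{n-1}{j-1}-1)2^m$ comes from; the estimate $[wc]\check\Phi_0^{n-1}\ge 2^m$ from \Cref{thm: Andre}(1) cannot by itself offset an uncontrolled negative term. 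So although your ingredients (telescoping, the derivation $G$, the Andr\'e bounds) all appear in the paper's toolkit, the proposal is missing the rank-lowering identities and the exact evaluation of $P_0^n$, and these are the two pieces that actually make the induction close.
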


We will see in \Cref{sec: nonnegative} that \Cref{thm: PnjPositivity} implies non-negativity of $\Phi_P(c,d)$ for any Buchsbaum semi-Eulerian simplicial poset $P$. This might come a bit as a surprise at first since the polynomials $P_{j}^n(c,d)$ are independent of the considered poset. 

We note that \Cref{thm: PnjPositivity} says nothing about the polynomials  $P_{0}^n(c,d)$ and $P_{1}^n(c,d)$ and indeed these polynomials have quite a few negative coefficients. The first steps towards proving \Cref{thm: PnjPositivity}  is to understand which coefficients of $P_0^n(c,d)$ are $P_1^n(c,d)$ are indeed negative. For $P_0^n(c,d)$ the precise behavior of its coefficients can be read off directly from the following simple closed formula for $P_{0}^n(c,d)$.
 
 \begin{lemma}\label{lem: Pn0}
     For a positive integer $n\ge 3$  we have
     \begin{equation} \label{eq: Pn0}P_{0}^n(c,d)= \begin{cases} (c^2-2d)^{\frac{n}{2}} & \text{ if } n \text{ is even,}\\
    (c^2-2d)^{\frac{n-1}{2}}c & \text{ if } n \text{ is odd}.
\end{cases}\end{equation}
 \end{lemma}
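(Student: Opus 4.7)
The plan is to compute the flag $f$-vector induced by both sides of \eqref{eq: Pn0} and verify that they coincide. Since any $cd$-polynomial is uniquely determined by its associated $ab$-polynomial (this is essentially the content of \Cref{prop:existenceCD}; the substitution $c\mapsto a+b$, $d\mapsto ab+ba$ is injective on $\mathbb{Z}\langle c,d\rangle$), this suffices.

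For the right-hand side, the key identity is $(a+b)^2-2(ab+ba)=(a-b)^2$, so under the substitution $c=a+b$, $d=ab+ba$ the polynomials $(c^2-2d)^{n/2}$ and $(c^2-2d)^{(n-1)/2}c$ map to $(a-b)^n$ and $(a-b)^{n-1}(a+b)$, respectively. Expanding these products and converting the flag $h$-vector to the flag $f$-vector via \eqref{eq : flag f to flag h}, one finds that the only non-zero flag $f$-numbers are $f_\emptyset=1$ (always) and $f_{\{n\}}=2$ (in the odd case).

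For the left-hand side, I would compute
\[
f_S(P^n_0)=\sum_{i=0}^{n-1}(-1)^i\binom{n}{i}f_S(\check\Phi^n_i)
\]
using the explicit expressions for $f_S(P(\Lambda^n_i))$ from \eqref{eq: identity 1} and the difference identity \eqref{eq: identity 2}, both taken from the proof of \Cref{lem: identity hard}. The cases $S=\emptyset$ and $S=\{n\}$ are immediate and give $1$ and $1-(-1)^n$, respectively. For $\emptyset\ne S\subseteq[n-1]$ (i.e., $s_k<n$), the alternating sum reduces, via the Vandermonde-type identity $\binom{n}{i}\binom{n-i}{s_k-i}=\binom{n}{s_k}\binom{s_k}{i}$, to $\binom{n}{s_k}\binom{s_k}{s_1,\ldots,s_k-s_{k-1}}\sum_{i=0}^{s_k}(-1)^i\binom{s_k}{i}=0$, so $f_S(P^n_0)=0$.

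The main obstacle lies in the remaining case $s_k=n$ with $S\ne\{n\}$, where the expression for $f_S(P(\Lambda^n_i))$ in \eqref{eq: identity 1} splits into two additive pieces coming from the new $(n-1)$-cell of the semisuspension and from $\partial\Gamma^n_i$. The alternating sums of these two pieces do not vanish individually, but after reindexing (as in Case~2 of the proof of \Cref{lem: identity hard}), they produce $(-1)^{n+1}\binom{n}{s_1,\ldots,n-s_{k-1}}$ and $(-1)^{n}\binom{n}{s_{k-1}}\binom{s_{k-1}}{s_1,\ldots,s_{k-1}-s_{k-2}}$, respectively. These cancel exactly thanks to the multinomial identity
\[
\binom{n}{s_1,\ldots,n-s_{k-1}}=\binom{n}{s_{k-1}}\binom{s_{k-1}}{s_1,\ldots,s_{k-1}-s_{k-2}},
\]
yielding $f_S(P^n_0)=0$ in this case as well. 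Hence the flag $f$-vectors of both sides agree, which by the injectivity recalled above gives the identity in \eqref{eq: Pn0}.
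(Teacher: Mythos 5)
Your argument is correct and takes essentially the same route as the paper: both sides are compared at the level of flag $f$-vectors, using \eqref{eq: identity 1} and \eqref{eq: identity 2} together with a case analysis on $S$, the only cosmetic difference being how the case $s_k=n$, $S\neq\{n\}$ is closed (your explicit cancellation of the two additive pieces via the multinomial identity versus the paper's reindexing into a single vanishing alternating binomial sum). Your target values $f_\emptyset=1$ and, for $n$ odd, $f_{\{n\}}=2=1-(-1)^n$ are the correct ones and agree with what the left-hand computation produces.
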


 \begin{proof}
 Throughout this proof, we use the same notation as in  \Cref{lem: identity hard}. 
      We prove the claim by looking at the flag $f$-vectors that are associated to both sides of the identity. Setting $\chi(a,b)=a^2$, we observe that 
      \[
            \chi(a-b,b)=(a-b)^2=(a+b)^2-2(ab+ba)=c^2-2d.
        \]
        It follows that 
        \[
        \chi^k(a-b,b)= (c^2-2d)^k \quad \mbox{and} \quad  (a+b)\chi^k(a,b)=(c^2-2d)^kc
        \]
        for $k\geq 1$. Hence, \eqref{eq: Pn0} is equivalent to the following statement
        \[
        f_S(P_{0}^n)=\begin{cases}
            \delta_{S,\emptyset}, \quad &\mbox{ if } n \mbox{ is even},\\
            \delta_{S,\emptyset}+\delta_{S,\{n\}} , \quad &\mbox{ if } n \mbox{ is odd}
            \end{cases}.
            \]
            We distinguish four cases.
            
       {\sf Case 1:} Assume that $S=\emptyset$. Using that, by definition of $\check\Phi_i^n$, we have $f_{\emptyset}(\check\Phi_i^n(c,d))=0$ for $1\leq i\leq n-1$ and $f_{\emptyset}(\check\Phi_0^n(c,d))=0$, we obtain
        \begin{equation*}
            f_{\emptyset}(P_{0}^n) = \sum_{i=0}^{n-1}(-1)^{i}\binom{n}{i}f_{\emptyset}(\check\Phi_i^n(c,d))=f_{\emptyset}(\check\Phi_0^n(c,d))=1.
        \end{equation*}
        {\sf Case 2:} Assume that $s_k<n$. Using \eqref{eq: identity 1} and \eqref{eq: identity 2} as in Case 1 of the proof of \Cref{lem: identity hard} we obtain
        \begin{equation*}
            f_S(P_{0}^n) = \sum_{i=0}^{n-1}(-1)^{i}\binom{n}{i}f_S(\check\Phi_i^n(c,d))=\binom{n}{s_1,s_2-s_2,\dots,n-s_k}\sum_{i=0}^{n-1}(-1)^{i}\binom{s_k}{i} =0.
        \end{equation*}
        {\sf Case 3:} Assume that $s_k=n$, and $S\neq \{n\}$. The same computation as in Case 2 of the proof of \Cref{lem: identity hard} shows that
        \begin{equation*}
            f_S(P_{0}^n) = \sum_{i=0}^{n-1}(-1)^{i}\binom{n}{i}f_S(\check\Phi_i^n(c,d))=\binom{n}{s_1,s_2-s_2,\dots,n-s_{k-1}}\sum_{i=0}^{n}(-1)^{i}\binom{n}{i} =0.
        \end{equation*}
        {\sf Case 4:} Assume that $S= \{n\}$. Being $\Lambda^n_0$ the semisuspension of a simplex we have that $f_{\{n\}}(\check\Phi_0^n(c,d))=f_{\{n\}}(P(\Lambda^n_0))=2$. Moreover, $f_{\{n\}}(\check\Phi_i^n(c,d))=f_{\{n\}}(P(\Lambda^n_i))-f_{\{n\}}(P(\Lambda^n_{i-1}))=1$ for $1\leq i\leq n-1$. This implies
        \begin{equation*}
            f_{\{n\}}(P_{0}^n) = \sum_{i=0}^{n-1}(-1)^{i}\binom{n}{i}f_{\{n\}}(\check\Phi_i^n(c,d))=2+\sum_{i=1}^{n-1}(-1)^{i}\binom{n}{i}=\begin{cases}
                0, & \text{if $n$ even}\\
                1, & \text{if $n$ odd},
            \end{cases}
        \end{equation*}
        and the claim follows.
 \end{proof}

The following recursion will be crucial to prove \Cref{thm: PnjPositivity} since it enables us to use induction.

\begin{lemma}\label{lem:recurrence}
Let $n\ge 5$ be a positive integer, $2\le j \le n-2$ and let $w$ be a $cd$-monomial. 
\begin{enumerate}
\item[(1)] If $w$ has degree $n-1$, then 
\begin{equation}\label{eq: wc final}
[wc]P_{j}^n = [w]P_{j}^{n-1} + \binom{n-1}{j-1}[wc]\check{\Phi}_{j}^n.
\end{equation}
\item[(2)] If $w$ has degree $n-2$, then 
\begin{equation}\label{eq: wd final}
    [wd]P_{j}^n=[wc]((-1)^{n+j+1} P_{0}^{n-1} - P_{j}^{n-1}+P_{n-j}^{n-1})+\binom{n-1}{j-1}[wd] \check{\Phi}_j^n.
\end{equation}
\end{enumerate}
\end{lemma}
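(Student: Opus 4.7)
The plan is to compute $[wc]P_j^n$ and $[wd]P_j^n$ by expanding the defining sum of $P_j^n$ and systematically applying the Ehrenborg--Readdy recursion $\check\Phi_i^n = G(\check\Phi_{i-1}^{n-1})$ from \Cref{thm: Ehrenborg Readdy recursion}, where $G$ is the derivation on $\mathbb{Z}\langle c,d\rangle$ with $G(c)=d$ and $G(d)=cd$. Let $\partial_c,\partial_d$ denote the linear operators on $\mathbb{Z}\langle c,d\rangle$ that strip a trailing $c$ (resp.\ $d$) from a monomial and send the complement to zero, so that $[wc]p=[w]\partial_c p$ and $[wd]p=[w]\partial_d p$. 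A direct Leibniz computation from $G(vc)=G(v)c+vd$ and $G(vd)=G(v)d+vcd$ yields the operator identities
\[
\partial_c\circ G = G\circ\partial_c \qquad\text{and}\qquad \partial_d\circ G = \partial_c + G\circ\partial_d + R_c\circ\partial_d,
\]
where $R_c$ denotes right multiplication by $c$.

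Expanding $\binom{n}{i}=\binom{n-1}{i-1}+\binom{n-1}{i}$ in the definition of $P_j^n$, isolating the $i=j$ term of one summand, and applying the Ehrenborg--Readdy recursion produces the master decomposition
\[
P_j^n = \binom{n-1}{j-1}\check\Phi_j^n - G(P_j^{n-1}) + G(T_j^{n-1}), \quad T_j^{n-1}:=\sum_{k=j-1}^{n-2}(-1)^{k-j+1}\binom{n-1}{k+1}\check\Phi_k^{n-1}.
\]
A key auxiliary identity obtained by telescoping the definition of $P_j^{n-1}$ is $P_{j-1}^{n-1}+P_j^{n-1}=\binom{n-1}{j-1}\check\Phi_{j-1}^{n-1}$. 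For part (1), applying $\partial_c$ to the master decomposition and using $\partial_c\circ G=G\circ\partial_c$ reduces the claim to the polynomial identity $G(\partial_c(T_j^{n-1}-P_j^{n-1}))=P_j^{n-1}$. This is verified by a direct calculation at the $(n-1)$-level: combining the defining sums via Pascal produces an explicit form for $T_j^{n-1}-P_j^{n-1}$, and then $\partial_c$ followed by $G$ (using Ehrenborg--Readdy once more) telescopes back to $P_j^{n-1}$.

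For part (2), applying $\partial_d$ to the master decomposition and unpacking the three contributions from $\partial_d\circ G$ produces four summands: one matching $\binom{n-1}{j-1}\partial_d\check\Phi_j^n$, together with $\partial_c(T_j^{n-1}-P_j^{n-1})$, $G(\partial_d(T_j^{n-1}-P_j^{n-1}))$, and $(\partial_d(T_j^{n-1}-P_j^{n-1}))c$. Using the reflection $k\mapsto n-1-k$ together with the binomial symmetry $\binom{n-1}{k+1}=\binom{n-1}{n-2-k}$, the $\partial_c$-piece is identified with $\partial_c(P_{n-j}^{n-1}-P_j^{n-1})$ up to a boundary correction. The remaining two pieces then collapse, using the closed form $P_0^{n-1}=(c^2-2d)^{(n-1)/2}$ or $(c^2-2d)^{(n-2)/2}c$ from \Cref{lem: Pn0}, into the boundary term $(-1)^{n+j+1}\partial_c P_0^{n-1}$. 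The main obstacle lies precisely in this reconciliation for part (2): carefully threading the three Leibniz pieces through the reflection and invoking \Cref{lem: Pn0} to pin down both the $P_0^{n-1}$-summand and the sign $(-1)^{n+j+1}$, the latter being dictated by the parity of $n-1$ in the closed-form expression for $P_0^{n-1}$.
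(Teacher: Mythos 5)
Your operator set-up is sound: the identities $\partial_c\circ G=G\circ\partial_c$ and $\partial_d\circ G=\partial_c+G\circ\partial_d+R_c\circ\partial_d$ are correct, and your ``master decomposition'' $P_j^n=\binom{n-1}{j-1}\check\Phi_j^n-G(P_j^{n-1})+G(T_j^{n-1})$ does follow from Pascal's rule and \Cref{thm: Ehrenborg Readdy recursion} (it is the derivation-level analogue of the rewriting \eqref{eq: rewriting} used in the paper). The gap is in the two verification steps you then assert. For part (1) you reduce to the polynomial identity $G\bigl(\partial_c(T_j^{n-1}-P_j^{n-1})\bigr)=P_j^{n-1}$ and claim it ``telescopes'' after applying $\partial_c$ to the explicit $\check\Phi$-combination $T_j^{n-1}-P_j^{n-1}$. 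But nothing in your toolkit (Leibniz rule plus Ehrenborg--Readdy) tells you what $\partial_c$ does to an individual $\check\Phi_k^{n-1}$, nor even to a consecutive difference: the relation $\check\Phi_{k}^{n-1}=G(\check\Phi_{k-1}^{n-2})$ only propagates such knowledge upward from index $0$, and you supply no base-case information about $\check\Phi_0^m$, $\check\Phi_1^m$. The identity that makes the telescoping work is exactly $[wc](\check\Phi_i^m-\check\Phi_{i+1}^m)=[w]\check\Phi_i^{m-1}$, i.e.\ \eqref{eq:wc-monomials}, which the paper imports from Novik's Remark~4.3 and which is the actual crux of its proof (combined with an Abel summation of $T_j^{n-1}-P_j^{n-1}$ into consecutive differences, your reduced identity does indeed come out correct, so the skeleton is completable --- but only with that extra ingredient).

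The same problem is more severe in part (2). The ``reflection $k\mapsto n-1-k$'' you invoke is not a consequence of binomial symmetry: it is precisely the content of the second Novik identity $[wd](\check\Phi_i^m-\check\Phi_{i+1}^m)=[wc](\check\Phi_{m-1-i}^{m-1}-\check\Phi_i^{m-1})$, i.e.\ \eqref{eq:wd-monomials}, without which none of your three Leibniz pieces $\partial_c(T-P)$, $G\partial_d(T-P)$, $(\partial_d(T-P))c$ can be evaluated. Moreover, the claimed collapse of the leftover terms into $(-1)^{n+j+1}\partial_c P_0^{n-1}$ is only asserted; in the paper's computation this step requires splitting into $n$ even and $n$ odd, because the middle range $j\le i\le n-1-j$ of the reflected sum cancels when $n$ is odd and doubles when $n$ is even, and \Cref{lem: Pn0} enters only to identify coefficients later, not to produce the sign. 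So as written, the central identities of your argument are unproven, and the stated tools cannot prove them; you would need to add \eqref{eq:wc-monomials}--\eqref{eq:wd-monomials} (or derive them by induction from your commutation relations together with explicit base cases for $\check\Phi_0^m$ and $\check\Phi_1^m$) and then carry out the bookkeeping, including the parity case distinction, in detail.
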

\begin{proof}
In this proof, we will strongly use the following two identities, that are true for $0\leq i\leq n-1$, from \cite[Remark 4.3]{Novik:cd}:
\begin{equation}\label{eq:wc-monomials}
   [wc](\check{\Phi}_i^n -\check{\Phi}_{i+1}^n)=[w]\check{\Phi}_i^{n-1}
\end{equation}
and
\begin{equation}\label{eq:wd-monomials}
   [wd](\check{\Phi}_i^n -\check{\Phi}_{i+1}^n)=[wc](\check{\Phi}_{n-1-i}^{n-1}-\check{\Phi}_i^{n-1}).
\end{equation}
Using the usual binomial recursion, can rewrite $P^n_j$ as follows:
\begin{equation}\label{eq: rewriting}
P_j^n=(-1)^j\sum_{i=j}^{n-1}\binom{n-1}{i}(-1)^i(\check{\Phi}_i^n-\check{\Phi}_{i+1}^{n})+\binom{n-1}{j-1}\check{\Phi}_j^n.
\end{equation}
The recursion in (1) follows from \eqref{eq: rewriting} and \eqref{eq:wc-monomials} as follows:
\begin{align*} 
    [wc]P_{j}^n=&(-1)^j\sum_{i=j}^{n-1}\binom{n-1}{i}(-1)^i[w]\check{\Phi}_i^{n-1}+\binom{n-1}{j-1}[wc]\check{\Phi}_j^n\\ \nonumber
    =&[w]((-1)^j\sum_{i=j}^{n-2}\binom{n-1}{i}(-1)^i\check{\Phi}_i^{n-1})+\binom{n-1}{j-1}[wc]\check{\Phi}_j^n\\ 
    =&[w]P_{j}^{n-1}+\binom{n-1}{j-1}[wc]\check{\Phi}_j^n.
\end{align*}

The proof of \eqref{eq: wd final} is more involved and will require a case distinction according to the parity of $n$. Using  \eqref{eq: rewriting} and \eqref{eq:wd-monomials} we get the following expression for the coefficient of $wd$:
\begin{align} \label{eq: wd-firstTrick}
 \nonumber [wd]P_{j}^n=&(-1)^j\sum_{i=j}^{n-1}\binom{n-1}{i}(-1)^i[wc](\check{\Phi}_{n-1-i}^{n-1}-\check{\Phi}_i^{n-1})+\binom{n-1}{j-1}[wd]\check{\Phi}_j^n\\ \nonumber
    =&(-1)^j\sum_{i=j}^{n-1-j}\left((-1)^{i+1}\binom{n-1}{i}+(-1)^{n-i-1}\binom{n-1}{n-i-1}\right)[wc]\check{\Phi}_{i}^{n-1}  +\binom{n-1}{j-1}[wd]\check{\Phi}_j^n\\ 
    &+(-1)^j\sum_{i=0}^{j-1}\binom{n-1}{n-1-i}(-1)^{n-1-i}[wc]\check{\Phi}_{i}^{n-1}  +(-1)^j\sum_{i=n-j}^{n-1}\binom{n-1}{i}(-1)^{i+1}[wc]\check{\Phi}_{i}^{n-1}.
\end{align}
We now analyze the right-hand side of \eqref{eq: wd-firstTrick}. The second sum can be rewritten as follows:  
\begin{align*}
    (-1)^j\sum_{i=0}^{j-1}\binom{n-1}{n-1-i}(-1)^{n-1-i}[wc]\check{\Phi}_{i}^{n-1}=& 
    (-1)^{n-1+}j\sum_{i=0}^{j-1}\binom{n-1}{i}(-1)^{i}[wc]\check{\Phi}_{i}^{n-1}\\
    =&(-1)^{n-1+j}[wc]\left(P_{0}^{n-1}-\sum_{i=j}^{n-1}\binom{n-1}{i}(-1)^{i}\check{\Phi}_{i}^{n-1}\right)\\
    =&(-1)^{n-1+j}[wc]\left(P_{0}^{n-1}-(-1)^jP_{j}^{n-1}\right) 
\end{align*}
We also note that the third sum equals $(-1)^{n-1}[wc]P_{n-j}^{n-1}$. 
To handle the first sum, assume that $n$ is odd. Using the symmetry of the binomial coefficients and that $(-1)^{i+1} =(-1)^{n-i}$ in this case, we see that the first sum on the right-hand side of \eqref{eq: wd-firstTrick} vanishes.  If $n$ is even, we obtain $(-1)^{i+1}\binom{n-1}{i}+(-1)^{n-i-1}\binom{n-1}{n-i-1}=(-1)^{i+1}2\binom{n-1}{i}$ for the coefficients in the first sum and we can rewrite this sum as follows:
\begin{align*}
    &(-1)^j\sum_{i=j}^{n-1-j}\left((-1)^{i+1}\binom{n-1}{i}+(-1)^{n-i-1}\binom{n-1}{n-i-1}\right)[wc]\check{\Phi}_{i}^{n-1} \\
    =&(-1)^j\sum_{i=j}^{n-1-j}(-1)^{i+1}2\binom{n-1}{i}[wc]\check{\Phi}_{i}^{n-1} \\
    =&-2[wc]P_j^{n-1}+2[wc]P_{n-j}^{n-1}.
\end{align*}

Substituting the derived expressions in \eqref{eq: wd-firstTrick} shows the desired recursion for every $n$.
\end{proof}

The crucial step towards proving \Cref{thm: PnjPositivity} is to provide bounds for the coefficients of the polynomials $P_j^n$. 
In doing so, we will treat separately the cases $j=2$ and $j\geq 3$. We start by providing an exact expression for the coefficients of monomials of $P^n_j$ ending in $d$.

\begin{lemma}\label{lem: Pn2}
    Let $n\geq 5$ be a positive integer and let $w$  be a $cd$-monomial of degree $n-2$ with $m$ occurrences of the variable $d$. Then:
    \begin{itemize}
    \item[(1)] If $n$ is even and odd, respectively, and all powers of $c$ in $wc$ and $w$, respectively, are even, then
    \begin{equation} \label{eq: lemma equality d,2} 
    [wd]P_{2}^n = (n-1)[wd]\check\Phi^n_1 +  [wc]\check\Phi_0^{n-1}+(1+(-1)^{n})(-1)^{m+1}2^m.
    \end{equation}
    \item[(2)] If none of the cases in (1) holds, we have
    \begin{equation} \label{eq: lemma equality d,2} 
    [wd]P_{2}^n = (n-1)[wd]\check\Phi^n_1 +  [wc]\check\Phi_0^{n-1}.
    \end{equation}
    \end{itemize}
    In particular, in both cases, one has
    \[
     [wd]P_{2}^n \geq (n-1)[wd]\check\Phi^n_1 +  [wc]\check\Phi_0^{n-1}-2^{m+1}.
    \]
     \end{lemma}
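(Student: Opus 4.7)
The plan is to write $P_2^n$ as a short combination of $P_0^n$, $\check\Phi_0^n$, and $\check\Phi_1^n$, and then extract the coefficient of $wd$ from each summand using \Cref{lem: Pn0} together with the Novik identity \eqref{eq:wd-monomials}. Isolating the $i=0$ and $i=1$ terms in the definition of $P_0^n$ yields the key algebraic identity
$$P_2^n = P_0^n - \check\Phi_0^n + n\check\Phi_1^n,$$
from which $[wd]P_2^n = [wd]P_0^n - [wd]\check\Phi_0^n + n[wd]\check\Phi_1^n$.

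For the middle term I would invoke \Cref{prop: coeffAndre}: the coefficient $[wd]\check\Phi_0^n$ counts André permutations of $[n]$ with $W(\pi) = wd$ and $\pi(n) = n$, but $W(\pi)$ ending in $d$ forces $\pi(n-1) > \pi(n)$, which is impossible when $\pi(n)$ is the maximum. Hence $[wd]\check\Phi_0^n = 0$. For the last term I would split $n[wd]\check\Phi_1^n = (n-1)[wd]\check\Phi_1^n + [wd]\check\Phi_1^n$ and apply \eqref{eq:wd-monomials} at $i = 0$; using $[wd]\check\Phi_0^n = 0$ together with $\check\Phi_{n-1}^{n-1} = 0$ (the convention $\check\Phi_N^N = 0$ applied with $N = n-1$), this collapses to $[wd]\check\Phi_1^n = [wc]\check\Phi_0^{n-1}$. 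Substituting reduces the statement to
$$[wd]P_2^n = [wd]P_0^n + (n-1)[wd]\check\Phi_1^n + [wc]\check\Phi_0^{n-1},$$
so the proof will be complete once $[wd]P_0^n$ is determined.

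The only step requiring a case analysis is this final coefficient extraction. When $n$ is odd, every monomial of $P_0^n = (c^2-2d)^{(n-1)/2}c$ ends with $c$, so $[wd]P_0^n = 0$ and case (2) follows. When $n$ is even, I would expand $(c^2-2d)^{n/2}$ as a sum over length-$n/2$ words in the alphabet $\{c^2,-2d\}$; the contribution to $wd$ is $(-2)^{m+1}$ precisely when $wd$ admits a parsing into blocks $c^2$ and $d$ (which, since the final block of $wd$ must be $d$, amounts to the stated structural condition on the powers of $c$), and is zero otherwise. Observing that $(1+(-1)^n)(-1)^{m+1}2^m$ equals $(-1)^{m+1}2^{m+1}$ for $n$ even and $0$ for $n$ odd, the two cases of the lemma drop out, and the displayed inequality follows from $|[wd]P_0^n| \le 2^{m+1}$ in every case.

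The argument is essentially bookkeeping, with no substantial obstacle beyond the correct identification of when $wd$ arises as a word in $\{c^2,d\}$; all other steps are direct applications of machinery already established in the paper.
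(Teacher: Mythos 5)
Your proof is correct, and it takes a genuinely different (and in fact shorter) route than the paper. The paper proves this lemma by specializing the recursion of \Cref{lem:recurrence}(2), i.e.\ starting from $[wd]P_2^n=[wc]\bigl((-1)^{n+1}P_0^{n-1}-P_2^{n-1}+P_{n-2}^{n-1}\bigr)+(n-1)[wd]\check\Phi_2^n$, then substituting $P_2^{n-1}=(n-1)\check\Phi_1^{n-1}-P_1^{n-1}$, $P_{n-2}^{n-1}=(n-1)\check\Phi_{n-2}^{n-1}$, applying \eqref{eq:wd-monomials}, and finally evaluating $[wc]P_0^{n-1}$ with \Cref{lem: Pn0}. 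You instead stay at level $n$: the identity $P_2^n=P_0^n-\check\Phi_0^n+n\check\Phi_1^n$ is immediate from the definition, $[wd]\check\Phi_0^n=0$ follows from \Cref{prop: coeffAndre} exactly as you argue (a type ending in $d$ forces a descent at $n-1$, impossible when $\pi(n)=n$), and \eqref{eq:wd-monomials} at $i=0$ together with $\check\Phi_{n-1}^{n-1}=0$ gives $[wd]\check\Phi_1^n=[wc]\check\Phi_0^{n-1}$, leaving only $[wd]P_0^n$, which \Cref{lem: Pn0} evaluates directly. This bypasses \Cref{lem:recurrence} entirely; both arguments ultimately rest on \eqref{eq:wd-monomials} and \Cref{lem: Pn0}, but yours is self-contained and cleaner for this particular lemma. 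One caveat on your phrase that the parsing criterion ``amounts to the stated structural condition'': what your argument actually yields (and what is correct) is that for $n$ even the exceptional term $(-1)^{m+1}2^{m+1}$ occurs exactly when all powers of $c$ in $w$ are even, i.e.\ $wd\in\{c^2,d\}^*$ (check $n=6$, $w=c^4$: $[c^4d]P_2^6=4=5\cdot 1+1-2$), whereas the lemma as printed attaches the condition on $wc$ to the even case --- the two ``respectively'' lists appear to be swapped there, which is immaterial for $n$ odd since the term then vanishes. So your derivation matches the corrected statement rather than the literal one; the concluding inequality is unaffected, since in all cases $[wd]P_0^n\geq -2^{m+1}$.
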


\begin{proof}
Using \Cref{eq: wd final}, $[wd]P_2^n$ can be computed as follows:
\[
[wd]P_2^{n} = [wc]((-1)^{n+1}P_{0}^{n-1}-P_2^{n-1} + P_{n-2}^{n-1}) + (n-1)[wd]\check\Phi^n_2.
\]
Since $P_2^{n-1}=(n-1)\check{\Phi}_{1}^{n-1}-P_1^{n-1}$ and $P_{n-2}^{n-1}= (n-1)\check\Phi^{n-1}_{n-2}$, this yields
\begin{align*}
     [wd]P_2^{n}  = & (-1)^{n+1}[wc]P_0^{n-1} +(n-1)[wc](\check\Phi_{n-2}^{n-1}-\check\Phi^{n-1}_1) + [wc]P_1^{n-1}+(n-1)[wd]\check\Phi^n_2 \\ 
        = &  (-1)^{n+1}[wc]P_0^{n-1} +(n-1)[wd](\check\Phi_{1}^{n}-\check\Phi^{n}_2) + [wc]P_1^{n-1}+(n-1)[wd]\check\Phi^n_2 \\ 
        =& (-1)^{n+1}[wc]P_0^{n-1} +(n-1)[wd]\check\Phi^{n}_1 + [wc]P_1^{n-1}.
        \end{align*}
As $P_1^{n-1}=\check\Phi_0^{n-1}-P_0^{n-1}$, we obtain
\[
     [wd]P_2^{n} =((-1)^{n+1}-1)[wc]P_0^{n-1}+[wc]\check\Phi_0^{n-1}+(n-1)[wd]\check\Phi^{n}_2.
\]
If $n$ is even and odd, respectively and all powers of $c$ in $wc$ and $w$, respectively, are even, it follows from \Cref{lem: Pn0} that $[wc]P_0^{n-1}=(-2)^m$. Otherwise, we have $[wc]P_0^{n-1}=0$.
  This finishes the proof of (1) and (2). The ``In particular''-statement is immediate, since $(1+(-1)^{n})(-1)^{m+1}2^m\geq -2^{m+1}$.
\end{proof}

We can finally provide the proof of \Cref{thm: PnjPositivity}.\\

\begin{proof}[Proof of \Cref{thm: PnjPositivity}]
First let $j=n-1$. If $n=3$, we have $P_2^3(c,d)=3cd$ and $P_{3}^4(c,d)=4ccd + 4dd$ and the claim is easily verified. 
Now let $n\geq 5$.  Since any permutation of $[n]$ mapping $n$ to $1$ has a descent at position $n-1$, \Cref{prop: coeffAndre} implies that all monomials occuring in $P_{n-1}^n$ with a non-zero coefficient have to end with the variable $d$. Using that, by definition, $P_{n-1}^n= n \check\Phi^n_{n-1}$, it now follows from \Cref{thm: Andre} (2) that 
$[wd]P_{n-1}^n\geq n\cdot 2^m\geq (\binom{n-1}{n-2}-1)2^m$. This shows the claim for the case $j=n-1$ and any $n\geq 5$. 

 In the following, we prove the remaining cases, i.e., $2\le j \le n-2$, by induction on $n$, the base case being $n=4$. By the previous argument, it only remains to verify the claim for $P_{2}^4$. But, as $P_{2}^4(c,d)=2c^2d+6cdc+8d^2$, the claim can easily be confirmed by hand. 

Now let $n\geq 5$ and assume by induction that the statements are true for the coefficients of any $P_j^s$ with $3\leq s \leq n-1$ and all  $2\leq j\leq s-2$. For the induction step, let $\tilde{w}$ be a $cd$-monomial of degree $n$. First, suppose that $\tilde{w}$ ends with a $c$, i.e., $\tilde{w}=wc$ for a degree $n-1$ $cd$-monomial $w$. In this case, \Cref{lem:recurrence} (1) implies
\[
[wc]P_{j}^n = [w]P_{j}^{n-1} + \binom{n-1}{j-1}[wc]\check{\Phi}_{j}^n\geq [w]P_{j}^{n-1},
\]
as the second term is non-negative by  \Cref{prop: coeffAndre}. If $w$ ends with a $c$, the claim follows directly from the induction hypothesis. If $w$ ends with a $d$, the induction hypothesis implies that 
\[
[wc]P_{j}^n \geq [w]P_{j}^{n-1}\geq (\binom{n-2}{j-1}-1)2^{m-1}.
\]
Since $j\geq 1$ and $n\geq 5$, we have $\binom{n-2}{j-1}-1\geq 3-1$ and 
the claim follows also in this case.

Now assume that $\tilde{w}$ ends with a $d$, i.e., $\tilde{w}=wd$ for a degree $n-2$ $cd$-monomial $w$. Let $m$ be the number of occurrences of $d$ in $w$. In the following, we distinguish two cases.
\begin{description}
  \item[{\sf Case 1:} $j=2$] Combining \Cref{lem: Pn2} and \Cref{thm: Andre}, we infer that
  \[
   [wd]P_{2}^n \geq (n-1)[wd]\check\Phi^n_1 +  [wc]\check\Phi_0^{n-1}-2^{m+1}\geq (n-1)2^m+2^m-2^{m+1}=(n-2)2^m
  \]
since $n\geq 5$. As $n-2=\binom{n-1}{2-1}-1$, the claim follows. 

  \item[{\sf Case 2:} $j\geq 3$]  We start by showing the following claim
      \begin{equation}\label{eq:helper}
      \binom{n-2}{j-1}[wc]\check\Phi_{j}^{n-1}\leq [wc]P_{j}^{n-1}\leq \binom{n-2}{j-1}[wc]\check{\Phi}_{j-1}^{n-1}.
      \end{equation}
      Indeed, \Cref{lem:recurrence} combined with the induction hypothesis and \Cref{prop: coeffAndre} directly implies the first inequality. Using this inequality for $P^{n-1}_{j-1}$ and again \Cref{lem:recurrence}, we obtain
      \begin{align*}
        [wc] P_{j}^{n-1}=&\binom{n-1}{j-1}[wc]\check{\Phi}_{j-1}^{n-1}-[wc]P_{j-1}^{n-1}\\
        \leq & \left(\binom{n-1}{j-1}-\binom{n-2}{j-2}\right)[wc]\check{\Phi}_{j-1}^{n-1}=\binom{n-2}{j-1}[wc]\check{\Phi}_{j-1}^{n-1}, 
      \end{align*}
      which finishes the proof of \eqref{eq:helper}. 
      Applying the lower and upper bound in \eqref{eq:helper} to $P_{n-j}^{n-1}$ and $P_j^{n-1}$, respectively, and using \Cref{lem:recurrence}, we obtain
\begin{align*}
        [wd]P_{j}^n=&[wc]((-1)^{n+j+1} P_{0}^{n-1} - P_{j}^{n-1}+P_{n-j}^{n-1})+\binom{n-1}{j-1}[wd] \check{\Phi}_j^n\\
        \geq &[wc]\left((-1)^{n+j+1} P_{0}^{n-1} - \binom{n-2}{j-1}[wc]\check{\Phi}_{j-1}^{n-1}+\binom{n-2}{n-j-1}[wc]\check{\Phi}_{n-j}^{n-1}\right)+\binom{n-1}{j-1}[wd] \check{\Phi}_j^n\\
        =& (-1)^{n+j+1} [wc]P_{0}^{n-1} +\binom{n-2}{j-1}[wc]\left(\check{\Phi}_{n-j}^{n-1}-\check{\Phi}_{j-1}^{n-1}\right)+\binom{n-1}{j-1}[wd] \check{\Phi}_j^n.
    \end{align*}
 Using \eqref{eq: rewriting} and \Cref{thm: Andre} (2), the above yields
    \begin{align*}
        [wd]P_{j}^n\geq & (-1)^{n+j+1} [wc]P_{0}^{n-1} +\binom{n-2}{j-1}[wd]( \check{\Phi}_{j-1}^{n}-\check{\Phi}_{j}^{n} ) +\binom{n-1}{j-1}[wd] \check{\Phi}_j^n \\
        =& (-1)^{n+j+1} [wc]P_{0}^{n-1} +\binom{n-2}{j-1}[wd]\check{\Phi}_{j-1}^{n}+\binom{n-2}{j-2}[wd] \check{\Phi}_j^n\\
        \geq &(-1)^{n+j+1} [wc]P_{0}^{n-1}+\binom{n-2}{j-1} 2^m+ \binom{n-2}{j-2}2^m\\
        \geq&-2^m+\binom{n-1}{j-1}2^m= (\binom{n-1}{j-1}-1)2^m,
    \end{align*}
where the last inequality follows from \Cref{lem: Pn0}. This finishes the proof.
\end{description}
\end{proof}

\section{Non-negativity of the $cd$-index for semi-Eulerian Buchsbaum simplicial posets}\label{sec: nonnegative}
The goal of this section is to show our main result; namely, that the $cd$-index of any Buchsbaum semi-Eulerian simplicial poset has only non-negative coefficients. The proof consists of applying \Cref{thm:NovSwa}, which is due to Novik and Swartz, to the formula for the $cd$-index from \Cref{thm:CDvsH} and then using the non-negativity of the $P$-polynomials (\Cref{thm: PnjPositivity}).

\begin{theorem}\label{thm:BoundPhi}
Let $n\geq 3$  and let $P$ be a semi-Eulerian Buchsbaum simplicial poset over a field $\mathbb{F}$ that is of rank $n+1$. Let $w$ be a $cd$-monomial with $m$ occurrences of the variable $d$. Then:
\begin{itemize}
    \item[(1)] If $\deg(w)=n-1$, then $[wc]\Phi_P\geq 2^m \left(\sum_{j=2}^{n-1}\beta_{j-1}(\Delta(\overline{P});\mathbb{F}) \binom{n-1}{j-1}\right)$.
      \item[(1)] If $\deg(w)=n-2$, then $[wd]\Phi_P\geq 2^m \left(\sum_{j=2}^{n-1}\beta_{j-1}(\Delta(\overline{P});\mathbb{F}) (\binom{n-1}{j-1}-1)\right)$.
\end{itemize}
In particular, all coefficients of $\Phi_{P}(c,d)$ are non-negative.
\end{theorem}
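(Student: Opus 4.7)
The proof combines three ingredients from the paper: the identity $\Phi_P(c,d)=\sum_{i=0}^{n-1}h_i(P)\,\check\Phi^n_i(c,d)$ from Theorem~\ref{thm:CDvsH}, the Novik--Swartz inequality (Theorem~\ref{thm:NovSwa}), and the lower bounds on the coefficients of the polynomials $P^n_j$ (Theorem~\ref{thm: PnjPositivity}). Writing $t_i := h_i(P)-\binom{n}{i}\sum_{j=1}^{i}(-1)^{i-j}\widetilde\beta_{j-1}\geq 0$, substituting into the identity, and swapping the double summation yields
\begin{equation*}
\Phi_P(c,d) \;=\; \sum_{i=0}^{n-1} t_i\,\check\Phi^n_i(c,d) \;+\; \sum_{j=1}^{n-1}\widetilde\beta_{j-1}\,P^n_j(c,d).
\end{equation*}
Since $t_i\geq 0$ and each $\check\Phi^n_i$ has non-negative coefficients by the Andr\'e permutation description (Proposition~\ref{prop: coeffAndre}), the first sum is coefficient-wise non-negative, and everything reduces to bounding the second sum coefficient-wise.

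For part (1), Theorem~\ref{thm: PnjPositivity}(1) already delivers the full claimed bound from the terms with $j\geq 2$. For the $j=1$ contribution $\widetilde\beta_0\,[wc]P^n_1$ I would combine the decomposition $P^n_1=\check\Phi^n_0-P^n_0$, the closed form for $P^n_0$ from Lemma~\ref{lem: Pn0}, and the Andr\'e bound $[wc]\check\Phi^n_0\geq 2^m$ (Theorem~\ref{thm: Andre}(1)) to verify $[wc]P^n_1\geq 0$ by inspection on the parity of $m$ and on whether $w$ factors into $c^2$ and $d$ blocks. Multiplying by $\widetilde\beta_0\geq 0$ preserves non-negativity and completes (1).

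Part (2) proceeds identically for the $j\geq 2$ terms using Theorem~\ref{thm: PnjPositivity}(2). The main obstacle is the $j=1$ term $\widetilde\beta_0\,[wd]P^n_1$: because every monomial of $\check\Phi^n_0$ ends in $c$, this coefficient equals $-[wd]P^n_0$. When $n$ is odd it vanishes, since Lemma~\ref{lem: Pn0} gives $P^n_0=(c^2-2d)^{(n-1)/2}c$ whose monomials all end in $c$. When $n$ is even the situation is more delicate: the middle instance $i=n/2$ of Klee's Dehn--Sommerville relation $h_i-h_{n-i}=(-1)^{i}\binom{n}{i}(\chi(\Delta)-\chi(\mathbb S^{n-1}))$ forces $\chi(\Delta)=\chi(\mathbb S^{n-1})$, so $P$ is already Eulerian; then the possible negative contribution of $\widetilde\beta_0\,[wd]P^n_1$ is compensated by the Novik--Swartz slack in the odd-index $t_i$'s propagating through the non-negative coefficients of $\check\Phi^n_i$, reducing to Novik's original argument for Eulerian Buchsbaum simplicial posets.

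The final non-negativity statement is then immediate: for $j\in\{2,\dots,n-1\}$ and $n\geq 3$ one has $\binom{n-1}{j-1}\geq 1$, so both bracketed coefficients in (1) and (2) are non-negative, while $\widetilde\beta_{j-1}\geq 0$ always. Hence every monomial coefficient of $\Phi_P(c,d)$ is non-negative.
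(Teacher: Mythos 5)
Your overall strategy is the same as the paper's: combine \Cref{thm:CDvsH}, \Cref{thm:NovSwa} and \Cref{thm: PnjPositivity}, swap the order of summation, and read off the bounds from the coefficients of the polynomials $P^n_j$; for the terms with $j\ge 2$ your argument coincides with the paper's proof. The genuine gap is your treatment of the $j=1$ term $\widetilde\beta_0(\Delta(\overline{P});\mathbb{F})\,[wd]P^n_1$ in part (2) when $n$ is even. That coefficient really can be negative: since every monomial of $\check\Phi^n_0$ ends in $c$, one has $[wd]P^n_1=-[wd]P^n_0=(-1)^m2^{m+1}$ whenever $wd$ is a product of $c^2$- and $d$-blocks (for instance $[d^2]P^4_1=-4$), and your proposed compensation ``by the Novik--Swartz slack in the odd-index $t_i$'s'' is not an argument: the slack $t_i$ can be zero, since the bound of \Cref{thm:NovSwa} can be attained, and appealing to Novik's original result does not help either, because it only covers monomials of a special shape and in any case would give mere non-negativity rather than the quantitative bound $2^m\bigl(\binom{n-1}{j-1}-1\bigr)$ you still need. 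So, as written, part (2) is not established.

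The missing step, which is exactly how the paper disposes of this issue, is that the $j=0,1$ terms are simply not there: $\widetilde\beta_{-1}(\Delta(\overline{P});\mathbb{F})=\widetilde\beta_0(\Delta(\overline{P});\mathbb{F})=0$ for the order complex of a Buchsbaum poset in the sense used in the paper (apply the defining vanishing condition to $F=\emptyset$; for $n\ge 3$ the complex has dimension at least $2$, so it is connected). Hence the sum $\sum_j\widetilde\beta_{j-1}[w]P^n_j$ starts at $j=2$, where \Cref{thm: PnjPositivity} applies, and no analysis of $P^n_1$ or $P^n_0$ is required. Your verification that $[wc]P^n_1\ge 0$ in part (1), via $P^n_1=\check\Phi^n_0-P^n_0$, \Cref{lem: Pn0} and \Cref{thm: Andre}(1), is correct but superfluous for the same reason. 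Your parity remark is also fine --- for $n$ even the middle Dehn--Sommerville relation does force $\chi(P)=\chi(\mathbb{S}^{n-1})$, so $P$ is Eulerian --- but this observation alone does not rescue the $j=1$ term.
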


\begin{proof}
Since the coefficients of $\check{\Phi}^n_i$ are non-negative for every $i$ (see \Cref{prop: coeffAndre}), the inequality in \Cref{thm:NovSwa} can be applied  to each $h_i(P)$, individually, in the expression for $\Phi_P(c,d)$ from \Cref{thm:CDvsH}. This yields, for any $cd$-monomial $w$
\begin{align*}
    [w]\Phi_{P}(c,d)&=\sum_{i=0}^{n-1} h_i(P)[w]\check\Phi_i^n(c,d)\\
    &\geq  \sum_{i=0}^{n-1}\left( \binom{n}{i}\sum_{j=0}^{i}(-1)^{i-j}\beta_{j-1}(\Delta(\overline{P});\mathbb{F})\right) [w]\check\Phi_i^n(c,d)\\
    &= \sum_{j=2}^{n-1}\beta_{j-1}(\Delta(\overline{P});\mathbb{F})\left( \sum_{i=j}^{n-1}\binom{n}{i}(-1)^{i-j}[w]\check\Phi_i^n(c,d)\right)\\
    &= \sum_{j=2}^{n-1}\beta_{j-1}(\Delta(\overline{P});\mathbb{F}) [w]P_{j}^n(c,d),
\end{align*}
where the last two summations start at $j=2$, as $\beta_{-1}(\Delta(\overline{P});\mathbb{F}) = \beta_{0}(\Delta(\overline{P}); \mathbb{F})= 0$. We conclude by applying \Cref{thm: PnjPositivity}.
\end{proof}

Since the face lattice of a simplicial manifold is a semi-Eulerian Buchsbaum poset, we obtain the following.

\begin{corollary}\label{cor : manifolds}
    Let $P$ be the face lattice of a simplicial manifold. Then $\Phi_{P}(c,d)$ has nonnegative coefficients.
\end{corollary}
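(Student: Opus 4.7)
The plan is to observe that \Cref{cor : manifolds} is a direct consequence of \Cref{thm:BoundPhi}, so the entire task reduces to verifying that the face lattice $P$ of a simplicial manifold satisfies the three hypotheses of that theorem: being simplicial, semi-Eulerian, and Buchsbaum.

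First I would check that $P$ is a simplicial poset. Since faces of a simplicial manifold are simplices, for every non-maximal $x \in P$ the interval $[\hat 0, x]$ is the Boolean algebra of subsets of the corresponding simplex; this is exactly the defining condition for $P$ to be simplicial (in the sense recalled just before \Cref{thm:CDvsH}).

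Next I would verify the semi-Eulerian and Buchsbaum conditions. For a simplicial manifold $M$, every vertex link is a homology sphere of the appropriate dimension; more generally, for any non-empty face $F$ the link $\mathrm{lk}_M(F)$ is a homology sphere of dimension $\dim M - |F|$. Translating this into poset language, each proper interval $[s,t] \neq P$ has Möbius function $\mu_P(s,t) = (-1)^{\mathrm{rk}(t)-\mathrm{rk}(s)}$, which is precisely the semi-Eulerian condition recalled in the preliminaries. Furthermore, simplicial manifolds are well known to be Buchsbaum over any field (all links have the reduced homology of a sphere, hence vanish outside the top dimension), so $P$ is a Buchsbaum simplicial poset in the sense of \Cref{sec: nonnegative}.

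With all three hypotheses verified, \Cref{thm:BoundPhi} applies and gives nonnegativity of every coefficient of $\Phi_P(c,d)$. The only minor subtlety worth noting is that if $M$ is odd-dimensional then $P$ is actually Eulerian (so the original conjecture of Novik is recovered), whereas if $M$ is even-dimensional it may be only semi-Eulerian, but in either case the hypotheses of \Cref{thm:BoundPhi} are satisfied, so no separate argument is needed. Since the work has all been packaged into \Cref{thm:BoundPhi}, there is no real obstacle here — the step that took the effort was establishing \Cref{thm:BoundPhi} itself, via the identity in \Cref{thm:CDvsH}, the lower bound of Novik–Swartz (\Cref{thm:NovSwa}), and the positivity of the $P$-polynomials (\Cref{thm: PnjPositivity}).
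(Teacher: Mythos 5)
Your proposal is correct and matches the paper's argument exactly: the paper proves \Cref{cor : manifolds} in one line by noting that the face lattice of a simplicial manifold is a semi-Eulerian Buchsbaum simplicial poset and invoking \Cref{thm:BoundPhi}. Your additional verification of the three hypotheses (Boolean lower intervals, sphere-like links giving the semi-Eulerian and Buchsbaum conditions) simply spells out what the paper takes as known.
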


If $P$ is the face lattice of a $(2k-1)$-dimensional simplicial manifold, the non-negativity of some of the coefficients of $\Phi_{P}(c,d)$ has been proven by Novik in \cite[Theorem 2.1]{Novik:cd}. More precisely, she showed that all monomials that are either of the form $wdc^i$ for $i$ odd, or of the form $c^{2k-i-2}dc^i$ for $i$ even, occur with a non-negative coefficient in $\Phi_P(c,d)$.  \Cref{cor : manifolds} completes Novik's result and extends non-negativity to even dimensional manifolds, thereby providing a positive answer to one of the questions in \cite[Section 6]{Novik:cd}.

\begin{remark}
    Since each coefficient of $\Phi_{P}(c,d)$ can be expressed as a linear combination of entries of the flag $h$-vector,  it is natural to ask how the non-negativity of $\Phi_P(c,d)$ (see \Cref{thm:BoundPhi}) affects the flag $h$-vector of $P$ and hence the $h$-vector of $\Delta(\overline{P})$. This is indeed related to interesting conjectures on a family of simplicial spheres called \emph{flag} spheres. A simplicial complex is flag if it coincides with the clique complex of its $1$-skeleton. For example, order complexes of graded posets are flag simplicial complexes. Consider a $(n-1)$-dimensional simplicial complex $\Delta$ which has a \emph{palindromic} $h$-vector (i.e., $h_i(\Delta)=h_{n-i}(\Delta)$ for every $i=0,\dots,n$). For instance, simplicial spheres have this property by Dehn-Sommerville relations. It is easy to see that there exist a unique integer vector $(\gamma_0(\Delta),\dots,\gamma_{\lfloor \frac{n}{2}\rfloor}(\Delta))$ defined by
    \[
        \sum_{i=0}^n h_i(\Delta)t^i = \sum_{i=0}^{\lfloor \frac{n}{2}\rfloor} \gamma_i(\Delta) t^i(t+1)^{n-2i}.
    \]
    For an arbitrary simplicial spheres the numbers $\gamma_i(\Delta)$ do not need to be nonnegative. However, Gal \cite{Gal} conjectured that if $\Delta$ is a flag simplicial sphere (or even a flag homology sphere) then $\gamma_i(\Delta)\geq 0$ for every $i$, which corresponds to a set of linear inequalities in the numbers $h_i(\Delta)$ which generalize a conjecture of Charney and Davis \cite{CD}. Indeed, they conjectured that if $\Delta$ is a flag simplicial $(2k-1)$-sphere, then $(-1)^k\sum_{i=0}^{2k} (-1)^{i}h_i(\Delta)\geq 0$, and this quantity equals $\gamma_{\lfloor \frac{n}{2}\rfloor}(\Delta)$. There is a connection with the $cd$-index of a poset, which was pointed out in \cite[Section 2.3]{Gal}: Indeed we have that
    \begin{equation}\label{eq : Gal cd}
        \sum_{i=0}^{\lfloor \frac{n}{2}\rfloor} \gamma_i(\Delta(\overline{P}))t^i = \Phi_{P}(1,2t),
    \end{equation}
    for any Eulerian poset $P$ of rank $n+1$. In particular, if $n=2k$ (so $P$ is Eulerian), then $\gamma_{k}(\Delta(\overline{P}))=2^k[d^k]\Phi_{P}(c,d)$. Stanley \cite{Stanley94} and Karu \cite{Karu}, respectively, used this idea to show that the Charney-Davis conjecture holds for barycentric subdivisions of simplicial spheres and regular CW-spheres (which are indeed flag), respectively. 
\end{remark}
Combining identity \eqref{eq : Gal cd} with \Cref{cor : manifolds} we obtain the following.
\begin{corollary}
    Let $P$ be an Eulerian Buchsbaum simplicial poset of rank $2k+1$. Then $\gamma_i(\Delta(\overline{P}))\geq 0$, for every $i=0,\dots,k$. In particular,
    \[
     (-1)^k\sum_{i=0}^{2k} (-1)^{i}h_i(\Delta(\overline{P}))\geq 0.
    \]
\end{corollary}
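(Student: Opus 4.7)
The plan is to deduce this corollary directly from \Cref{thm:BoundPhi} via the identity $\sum_{i=0}^{k}\gamma_i(\Delta(\overline{P}))\,t^i=\Phi_P(1,2t)$ recalled in \eqref{eq : Gal cd}. Since every Eulerian poset is in particular semi-Eulerian, an Eulerian Buchsbaum simplicial poset $P$ of rank $2k+1$ satisfies the hypotheses of \Cref{thm:BoundPhi}, so every coefficient of $\Phi_P(c,d)$ is non-negative. After substituting $c=1$ and $d=2t$, the coefficient of $t^j$ in $\Phi_P(1,2t)$ is $2^j$ times the sum of the coefficients of all $cd$-monomials in $\Phi_P(c,d)$ containing exactly $j$ occurrences of the letter $d$; this sum is non-negative, and by \eqref{eq : Gal cd} it equals $\gamma_j(\Delta(\overline{P}))$. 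This establishes the first assertion.

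The second statement is then a routine consequence of the definition of the $\gamma$-vector. Starting from the identity
\[
\sum_{i=0}^{2k} h_i(\Delta(\overline{P}))\,t^i \;=\; \sum_{j=0}^{k}\gamma_j(\Delta(\overline{P}))\,t^j(1+t)^{2k-2j},
\]
evaluation at $t=-1$ makes every term on the right-hand side with $j<k$ vanish, so the identity collapses to $\sum_{i=0}^{2k}(-1)^i h_i(\Delta(\overline{P}))=(-1)^k\gamma_k(\Delta(\overline{P}))$. Multiplying by $(-1)^k$ and using $\gamma_k(\Delta(\overline{P}))\geq 0$ from the first part yields the desired inequality.

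There is essentially no obstacle at this stage: the hard work has already been done in \Cref{thm:BoundPhi}, and the argument above is just bookkeeping. The only subtlety worth verifying is that the substitution $c\mapsto 1$, $d\mapsto 2t$ preserves non-negativity coefficient by coefficient in $t$, which is immediate since it groups together non-negative contributions from distinct $cd$-monomials according to the number of occurrences of $d$.
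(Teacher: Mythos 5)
Your proposal is correct and follows essentially the same route as the paper: non-negativity of all $cd$-coefficients from \Cref{thm:BoundPhi} (the paper cites \Cref{cor : manifolds}, but invoking \Cref{thm:BoundPhi} is exactly what is needed for the Buchsbaum hypothesis), combined with the identity \eqref{eq : Gal cd}, gives $\gamma_i(\Delta(\overline{P}))\geq 0$, and the evaluation at $t=-1$ recovering the Charney--Davis quantity as $\gamma_k$ is the same bookkeeping the paper alludes to. No gaps.
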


\begin{remark}
    Tracking the coefficient of $d^k$ in the polynomials $P_{j}^{2k}$ in the proof \Cref{thm: PnjPositivity} one can obtain stronger bounds. For instance, for a semi-Eulerian simplicial poset of rank $5$ which is Buchsbaum over a field $\mathbb{F}$ we have
\begin{align*}
    \sum_{i=0}^{4} (-1)^{i}h_i(\Delta(\overline{P}))&\geq 4([d^2]P_{4,2}\widetilde{\beta}_1(\Delta(\overline{P});\mathbb{F})+[d^2]P_{4,3}\widetilde{\beta}_2(\Delta(\overline{P});\mathbb{F}))\\
    &=32\widetilde{\beta}_1(\Delta(\overline{P});\mathbb{F})+16\widetilde{\beta}_2(\Delta(\overline{P});\mathbb{F}).
\end{align*}
    If $\Delta(\overline{P})$ is homeomorphic to a $3$-manifold, then by Poincar\'{e} duality we obtain that $\sum_{i=0}^{4} (-1)^{i}h_i(\Delta(\overline{P})) \geq 48\widetilde{\beta}_1(\Delta(\overline{P});\mathbb{F})$, which should be compared to \cite[Conjecture 5.6]{BOWWZZ} where the authors predict the inequality $\sum_{i=0}^{4} (-1)^{i}h_i(\Delta)\geq 16\widetilde{\beta}_1(\Delta;\mathbb{F})$ for any flag triangulation $\Delta$ of a $3$-manifold.
\end{remark}

\bibliographystyle{plain}
\bibliography{bibliography}

\begin{thebibliography}{10}

\bibitem{Bayer-Signs}
M.~M. Bayer.
\newblock Signs in the {$cd$}-index of {E}ulerian partially ordered sets.
\newblock {\em Proc. Amer. Math. Soc.}, 129(8):2219--2225, 2001.

\bibitem{Bayer-Survey}
M.~M. Bayer.
\newblock The {$cd$}-index: a survey.
\newblock In {\em Polytopes and discrete geometry}, volume 764 of {\em Contemp.
  Math.}, pages 1--19. Amer. Math. Soc., [Providence], RI, [2021] \copyright
  2021.

\bibitem{BaBi}
M.~M. Bayer and L.~J. Billera.
\newblock Generalized {D}ehn-{S}ommerville relations for polytopes, spheres and
  {E}ulerian partially ordered sets.
\newblock {\em Invent Math}, 79:143--157, 1985.

\bibitem{BaKla}
M.~M. Bayer and A.~A. Klapper.
\newblock A new index for polytopes.
\newblock {\em Discrete Comput Geom}, 6:33--47, 1991.

\bibitem{BOWWZZ}
C.~Bibby, A.~Odesky, M.~Wang, S.~Wang, Z.~Zhang, and H.~Zheng.
\newblock Minimal flag triangulations of lower-dimensional manifolds.
\newblock {\em Involve}, 13(4):683--703, 2020.

\bibitem{CD}
R.~Charney and M.~Davis.
\newblock The {E}uler characteristic of a nonpositively curved, piecewise
  {E}uclidean manifold.
\newblock {\em Pacific J. Math.}, 171(1):117--137, 1995.

\bibitem{Ehrenborg-KEUL}
R.~Ehrenborg.
\newblock {$k$}-{E}ulerian posets.
\newblock {\em Order}, 18(3):227--236, 2001.

\bibitem{Ehrenborg-Readdy}
R.~Ehrenborg and M.~Readdy.
\newblock Coproducts and the cd-index.
\newblock {\em Journal of Algebraic Combinatorics}, 8(3):273--299, 1998.

\bibitem{Fine}
J.~Fine.
\newblock Morse theory for convex polytopes.
\newblock {\em manuscript}, 1985.

\bibitem{Foata}
D.~Foata and M.~P. Sch\"utzenberger.
\newblock Nombres d'euler at permutations alternantes.
\newblock {\em In: J.N. Srivastava et al., A Survey of Combinatorial Theory,
  Amsterdam, North-Holland}, pages 173--187, 1973.

\bibitem{Gal}
S.~R. Gal.
\newblock Real root conjecture fails for five- and higher-dimensional spheres.
\newblock {\em Discrete Comput. Geom.}, 34(2):269--284, 2005.

\bibitem{Het}
G.~Hetyei.
\newblock On the {$cd$}-variation polynomials of {A}ndr\'{e} and {S}imsun
  permutations.
\newblock {\em Discrete Comput. Geom.}, 16(3):259--275, 1996.

\bibitem{Karu}
K.~Karu.
\newblock The $cd$-index of fans and posets.
\newblock {\em Compositio Math.}, 142:701--718, 2006.

\bibitem{Klee_1964}
V.~Klee.
\newblock A combinatorial analogue of {P}oincaré’s duality theorem.
\newblock {\em Canadian Journal of Mathematics}, 16:517–531, 1964.

\bibitem{Novik:cd}
I.~Novik.
\newblock Lower bounds for the $cd$-index of odd-dimensional simplicial
  manifolds.
\newblock {\em European Journal of Combinatorics}, 21(4):533--541, 2000.

\bibitem{NS-Socles}
I.~Novik and E.~Swartz.
\newblock Socles of {B}uchsbaum modules, complexes and posets.
\newblock {\em Advances in Mathematics}, 222(6):2059--2084, 2009.

\bibitem{Purtill}
M.~Purtill.
\newblock André permutations, lexicographic shellability and the cd-index of a
  convex polytope.
\newblock {\em Transactions of the American Mathematical Society},
  338(1):77--104, 1993.

\bibitem{Stanley94}
R.~P. Stanley.
\newblock Flag $f$-vectors and the $cd$-index.
\newblock {\em Math. Z.}, 216:483--499, 1994.

\bibitem{Sw:SpheresToManifolds}
E.~Swartz.
\newblock Face enumeration—from spheres to manifolds.
\newblock {\em Journal of the European Mathematical Society}, 011(3):449--485,
  2009.

\end{thebibliography}

\end{document}